\documentclass[10pt, a4paper]{article}
\usepackage{amsfonts}
\usepackage{mathrsfs}
\usepackage{latexsym}
\usepackage{xy}
\usepackage{amsfonts,amsmath,amssymb,amsthm}
\usepackage{color}
%\oddsidemargin0.6cm
%\evensidemargin0.6cm
%\textwidth13cm
%\textheight19.5cm
%\renewcommand{\baselinestretch}{1}                %\baselineskip 的倍数, 两者相乘为行间距。
%\fontsize{11pt}{11pt}\selectfont
%\parskip 2pt

%\topmargin-8mm

\xyoption{all}

\newcommand{\bcen}{\begin{center}}     \newcommand{\ecen}{\end{center}}
\newcommand{\bay}{\begin{array}}      \newcommand{\eay}{\end{array}}
\newcommand{\beq}{\begin{eqnarray*}}      \newcommand{\eeq}{\end{eqnarray*}}
\def\az{\alpha}
\def\bz{\beta}
\def\dz{\delta}

\def\ot{\otimes}

\def\wt{\widetilde}
\def\mt{\mapsto}

\def\dz{\delta}

\def\Ext{\mathrm{Ext}}

\def\Hom{\mathrm{Hom}}

\def\id{\mathrm{id}}
\def\Im{\mathrm{Im}}

\def\Ker{\mathrm{Ker}}

\def\mt{\mapsto}

\def\ol{\overline}
\def\op{\mathrm{op}}
\def\ot{\otimes}

\def\per{\mathrm{per}}

\def\RHom{\mathrm{RHom}}

\def\T{\mathbb{T}}

\begin{document}

\newtheorem{theorem}{Theorem}
\newtheorem{proposition}{Proposition}
\newtheorem{lemma}{Lemma}
\newtheorem{corollary}{Corollary}
\newtheorem{remark}{Remark}
\newtheorem{example}{Example}
\newtheorem{definition}{Definition}
\newtheorem*{conjecture}{Conjecture}
\newtheorem{question}{Question}

\title{Exact Hochschild extensions and deformed Calabi-Yau completions}

\author{Yang Han, Xin Liu and Kai Wang}

\date{\footnotesize KLMM, Academy of Mathematics and Systems Science,
Chinese Academy of Sciences, \\ Beijing 100190, China. \\ School of Mathematical Sciences, University of
Chinese Academy of Sciences, \\ Beijing 100049, China.\\ E-mail:
hany@iss.ac.cn (Y. Han), liuxin215@mails.ucas.ac.cn (X. Liu),\\ wangkai@amss.ac.cn (K. Wang)}

\maketitle

\begin{abstract} We introduce the Hochschild extensions of dg algebras, which are $A_\infty$-algebras.
We show that all exact Hochschild extensions are symmetric Hochschild extensions, more precisely,
every exact Hochschild extension of a finite dimensional complete typical dg algebra is a symmetric $A_\infty$-algebra.
Moreover, we prove that the Koszul dual of trivial extension is Calabi-Yau completion
and the Koszul dual of exact Hochschild extension is deformed Calabi-Yau completion,
more precisely, the Koszul dual of the trivial extension of a finite dimensional complete dg algebra is the Calabi-Yau completion of its Koszul dual,
and the Koszul dual of an exact Hochschild extension of a finite dimensional complete typical dg algebra is the deformed Calabi-Yau completion of its Koszul dual.
\end{abstract}

\medskip

{\footnotesize {\bf Mathematics Subject Classification (2010)}:
16E40, 16E45, 18E30}

\medskip

{\footnotesize {\bf Keywords} :  Hochschild (co)homology, Hochschild extension, $A_\infty$-algebra, Koszul dual, deformed Calabi-Yau completion.}

\tableofcontents

\section{Introduction}

Throughout this paper, $k$ is a field and $K=k^t$ for a positive integer $t$.

Symmetric algebras introduced by Brauer and Nesbitt \cite{BraNes37} is an important class of finite dimensional algebras
(see \cite{SkoYam11,Yam96} and the references therein).
For every finite dimensional algebra $A$, one can construct its trivial extension $\T(A):= A\ltimes A^\vee$ with $A^\vee=\Hom_k(A,k)$,
which is always symmetric \cite{AusReiSma95}. It means that symmetric algebras are as many as finite dimensional algebras.
For each Hochschild 2-cocycle $\az$ of $A$ with coefficients in $A^\vee$, one can construct its Hochschild extension $\T(A,\az)$.
In general, $\T(A,\az)$ is possibly not a symmetric algebra.
In 1999, Ohnuki, Takeda and Yamagata gave a sufficient condition for $\T(A,\az)$ to be symmetric \cite{OhnTakYam99}.
Recently, Itagaki provided a weaker sufficient condition \cite{Ita19}.

Calabi-Yau algebras were introduced by Ginzburg \cite{Gin06}.
Their bounded derived categories are Calabi-Yau triangulated categories \cite{Kel08}.
There exist Van den Bergh dualities between their Hochschild homologies and Hochschild cohomologies \cite{Van98}.
Furthermore, their Hochschild cohomologies are Batalin-Vilkovisky algebras \cite{Gin06,Abb15}.
For a homologically smooth dg algebra $A$, Keller introduced its Calabi-Yau completion $\Pi_n(A)$ which is an exact Calabi-Yau dg algebra \cite{Kel11,Kel18}.
More general, for a Hochschild class $[\az]\in HH_{n-2}(A)$,
he introduced its deformed Calabi-Yau completion $\Pi_n(A,\az)$ called derived preprojective algebra as well \cite{Kel11}.
If $[\az] \in HH_{n-2}(A)$ is an almost exact Hochschild homology class, i.e., it is the image of a negative cyclic homology class,
then $\Pi_n(A,\az)$ is an almost exact Calabi-Yau algebra \cite{Yeu16}.
Therefore, Ginzburg dg algebras associated to quivers with potential are Calabi-Yau dg algebras.

Koszul algebras were introduced by Priddy \cite{Pri70}.
They admit Koszul resolutions which are simpler than bar resolutions.
Koszul duality was introduced by Beilinson, Ginzburg and Schechtman \cite{BeiGinSch88},
and further developed by Beilinson, Ginzburg and Soergel \cite{BeiGinSoe96}.
There exist Koszul dualities on three levels --- algebras, module categories and derived categories.
Heretofore, Koszul duality has been built for dg categories \cite{Kel94}, operads \cite{GinKap94},
props \cite{Val07}, $A_\infty$-algebras \cite{LuPalWuZha08},
curved props \cite{HirMil12}.
For an augmented dg algebra $A$, there are two ways to define its Koszul dual.
One is $(BA)^\vee$, the graded dual of the bar construction $BA$ of $A$.
The other is $A^\dag:=\Omega(A^\vee)$, the cobar construction of the graded dual $A^\vee$ of $A$,
in the case that $A$ is a locally finite, bounded above or bounded below, augmented dg algebra.
In practice, $\Omega(A^\vee)$ is more feasible than $(BA)^\vee$.
Once $A$ is a typical dg algebra, then the Koszul dual $A^\dag=\Omega(A^\vee) \cong (BA)^\vee$ (see \cite{HanLiuWan18}).
Koszul dual $(-)^\dag$ sets up the relationship between finite dimensional dg algebras and homologically smooth dg algebras \cite{Lun10,HanLiuWan18},
and the relationship between symmetric dg algebras and Calabi-Yau dg algebras \cite{Van15,Her18,HanLiuWan18}.
Moreover, the Hochschild (co)homologies of dg algebras and their Koszul duals are closely related \cite{Her15,HanLiuWan18}.

The higher trivial extensions of algebras have already been studied
in the works of Keller \cite{Kel05}, Amiot \cite{Ami09} and L.Y. Guo \cite{Guo11} during introducing generalized cluster categories.
Recently, Guo, Grant and Iyama found some connections between higher trivial extensions and higher preprojective algebras of algebras
under Koszul duality \cite{Guo19,GraIya19}. Nonetheless, many trivial extensions, more general, Hochschild extensions of dg algebras, have not been studied yet.

In this paper, firstly, we will introduce the {\it Hochschild extension} $\T(A,M,\az)$ of an augmented dg $K$-algebra $A$
by a dg $A$-bimodule $M$ satisfying $M_{-2}=0$ and a Hochschild 2-cocycle $\az : A^{\ot 2}\to M$.
It is an augmented $A_\infty$-algebra (Theorem \ref{Theorem-Hoch-Ext}).
Secondly, we focus on the Hochschild extension $\T_n(A,\az):=\T(A,A^\vee[-n],\az)$ of a finite dimensional complete typical dg $K$-algebra $A$
by the shift $A^\vee[-n]$ of the graded dual dg $A$-bimodule $A^\vee$ of $A$ satisfying $A_{2-n}=0$ and a Hochschild 2-cocycle $\az : A^{\ot 2}\to A^\vee[-n]$.
An augmented $A_\infty$-algebra $\T$ is said to be {\it $n$-symmetric} if $\T \cong \T^\vee[-n]$ as $A_\infty$-$\T$-bimodules.
A Hochschild extension $\T_n(A,\az)$ is said to be {\it symmetric} if $\T_n(A,\az)$ is a symmetric $A_\infty$-algebra.
In general, a Hochschild extension $\T_n(A,\az)$ might not be symmetric.
We will give a cohomological criterion for a Hochschild extension $\T_n(A,\az)$ to be symmetric.
A Hochschild extension $\T_n(A,\az)$ is said to be {\it exact} if the second Hochschild cohomology class
$[\az]\in H^2(A,A^\vee[-n])=HH_{2-n}(A)^\vee$ is {\it exact}, i.e., in the image of the natural map $I_{2-n}^\vee:HC_{2-n}(A)^\vee \to HH_{2-n}(A)^\vee$.
We will show that exact Hochschild extensions of finite dimensional complete typical dg $K$-algebras are symmetric (Theorem \ref{Theorem-HochExt-Sym}).
It is a cohomological generalization of Ohnuki, Takeda and Yamagata's result \cite[Theorem 2.2]{OhnTakYam99}.
Thirdly, utilizing the relation between the Hochschild homologies of a complete typical dg $K$-algebra $A$ and its Koszul dual $A^\dag$, i.e.,
the isomorphism $HH_n(A)^\vee \cong HH_{-n}(A^\dag)$ (Theorem \ref{Theorem-HHHH_Connes}),
we define the {\it Koszul dual} of a Hochschild cohomology class $[\bz]\in H^n(A,A^\vee)=HH_n(A)^\vee$
to be its image $[\bz^\dag]\in HH_{-n}(A^\dag)$ under the isomorphism $HH_n(A)^\vee \cong HH_{-n}(A^\dag)$.
Employing the relations between the Hochschild (co)homologies and (negative) cyclic homologies of
a complete typical dg $K$-algebra and its Koszul dual, i.e.,
the isomorphisms $HH_n(A)^\vee \cong HH_{-n}(A^\dag)$ and $HC_n(A)^\vee \cong HN_{-n}(A^\dag)$ (Proposition \ref{Proposition-HC-KoszulDual}),
we show that a Hochschild cohomology class $[\bz]\in H^n(A,A^\vee)=HH_n(A)^\vee$ is exact if and only if its Koszul dual
$[\bz^\dag]\in HH_{-n}(A^\dag)$ is {\it almost exact}, i.e., in the image of the natural map $HN_{-n}(A^\dag) \to HH_{-n}(A^\dag)$ (Proposition \ref{Proposition-Ex-AlmostEx}).
Fourthly, the {\it $n$-trivial extension} $\T_n(A)$ of a finite dimensional complete dg $K$-algebra $A$, i.e.,
the augmented dg $K$-algebra $A\oplus A^\vee[-n]$ with the product given by $(a,s^{-n}f)\cdot(a',s^{-n}f'):=(aa',s^{-n}((-1)^{|a|n}af'+fa'))$, is obviously an $n$-symmetric dg $K$-algebra.
Since the Koszul dual $A^\dag$ of $A$ is a homologically smooth dg $K$-algebra \cite[Theorem 7]{HanLiuWan18},
we can construct its $n$-Calabi-Yau completion $\Pi_n(A^\dag)$.
We will prove that $\Pi_n(A^\dag) \cong \T_n(A)^\dag$ and they are both exact $n$-Calabi-Yau dg algebras (Theorem \ref{Theorem-TrivExt-CYComp}).
This isomorphism implies that the Koszul dual of trivial extension is Calabi-Yau completion,
which could be illustrated as the following:
$$\xymatrixcolsep{7pc}\xymatrixrowsep{3pc}\xymatrix{ A \ar@{.>}[rr]^-{\mbox{trivial extension}}
\ar@{.>}[d]^-{\mbox{Koszul dual}} && \T_n(A) \ar@{.>}[d]_-{\mbox{Koszul dual}} \\
A^\dag \ar@{.>}[r]_-{\mbox{CY completion}} & \Pi_n(A^\dag) \ar[r]^-{\cong}_-{\mbox{Theorem A}} & \T_n(A)^\dag .}$$
It will be applied to recover a result of J.Y. Guo in \cite{Guo19},
and could be viewed as the dg lift of the isomorphism in \cite[Theorem 5.3]{Guo19}.
Last but not least, for a finite dimensional complete typical dg $K$-algebra $A$ and an exact Hochschild cohomology class $[\az]\in H^2(A,A^\vee[-n])=HH_{2-n}(A)^\vee$,
on one hand, we can construct the Koszul dual $\T_n(A,\az)^\dag$ of the exact Hochschild extension $\T_n(A,\az)$ of $A$ by $A^\vee[-n]$ and $\az$.
On the other hand, since the Koszul dual $A^\dag$ of $A$ is homologically smooth,
we can construct the deformed Calabi-Yau completion $\Pi_n(A^\dag,\az^\dag)$
of $A^\dag$ by the almost exact Hochschild homology class $[\az^\dag]\in HH_{n-2}(A^\dag)$, i.e., the Koszul dual of $[\az]$.
We will show that $\T_n(A,\az)^\dag \cong \Pi_n(A^\dag,\az^\dag)$ and they are both almost exact $n$-Calabi-Yau dg algebras (Theorem \ref{Theorem-ExHochExt-DefCYComp}).
This isomorphism implies that the Koszul dual of exact Hochschild extension is deformed Calabi-Yau completion,
which could be illustrated as the following:
$$\xymatrixcolsep{9pc}\xymatrixrowsep{3pc}\xymatrix{ A \ar@{.>}[rr]^-{\mbox{exact Hochschild extension}}
\ar@{.>}[d]^-{\mbox{Koszul dual}} && \T_n(A,\alpha) \ar@{.>}[d]_-{\mbox{Koszul dual}} \\
A^\dag \ar@{.>}[r]_-{\mbox{deformed CY completion}} & \Pi_n(A^\dag,\alpha^\dag) \ar[r]^-{\cong}_-{\mbox{Theorem B}} & \T_n(A,\alpha)^\dag .}$$

Throughout, $\ot:=\ot_K$ and $(-)^\vee=\Hom_k(-,k)$ the graded $k$-dual.
Denote by $\mathbb{N}$ the set of positive integers, and by $\mathbb{N}_0$ the set of nonnegative integers.
By convention, in a complex, an element of lower degree $i \in \mathbb{Z}$ is of upper degree $-i$.

\section{Preliminaries}

In this section, we fix some terminologies and notations on (co)coaugmented dg $K$-(co)algebras and (co)augmented $A_\infty$-(co)algebras,
some of which are quite not consistent in existing literatures.

\subsection{(Co)Augmented dg $K$-(co)algebras}

We mainly refer to \cite{Lef03,LuPalWuZha04,HanLiuWan18} for some knowledge on dg $K$-(co)algebras.

\bigskip

\noindent{\bf Augmented dg $K$-algebras.}
An {\it augmented dg $K$-algebra} $A = (A,d,\mu,\eta,\varepsilon)$ is a $K$-bimodule complex $(A,d)$ equipped with
three $K$-bimodule complex morphisms $\mu: A \ot A \rightarrow A$ called {\it product},
$\eta : K \rightarrow A$ called {\it unit},
and $\varepsilon : A \rightarrow K$ called {\it augmentation},
satisfying associativity $\mu \circ (\mu \ot \id_A) = \mu \circ (\id_A \ot \mu)$,
unitality $\mu \circ (\eta \ot \id_A) = \id_A = \mu \circ (\id_A \ot \eta)$,
and $\varepsilon \circ \eta = \id_K$.
If $A$ is an augmented dg $K$-algebra then $A = K 1_A \oplus \ol{A}$ where $1_A=\eta(1_K)$ and $\ol{A} = \Ker \varepsilon$
called the {\it augmentation ideal} of $A$. We always identify $K 1_A$ with $K$.

A {\it morphism} from an augmented dg $K$-algebra $(A,d_A,\mu_A,\eta_A,\varepsilon_A)$ to an augmented dg $K$-algebra $(A',d_{A'},\mu_{A'},\eta_{A'},\varepsilon_{A'})$
is a $K$-bimodule complex morphism $f : A \rightarrow A'$
satisfying $f \circ \mu_A = \mu_{A'} \circ (f \ot f)$, $f \circ \eta_A = \eta_{A'}$ and $\varepsilon_{A'} \circ f = \varepsilon_A$.

An augmented dg $K$-algebra $A$ is {\it complete} if $\bigcap\limits_{n\in\mathbb{N}} \ol{A}^n=0$.
Obviously, a finite dimensional augmented dg $K$-algebra $A$ is complete if and only if its augmentation ideal $\ol{A}$ is nilpotent,
i.e., there is $n\in\mathbb{N}$ such that $\ol{A}^n=0$.

\bigskip

\noindent{\bf Coaugmented dg $K$-coalgebras.}
A {\it coaugmented dg $K$-coalgebra} $C = (C,d,\linebreak \Delta,\varepsilon,\eta)$ is a $K$-bimodule complex $(C,d)$
equipped with three $K$-bimodule complex morphisms $\Delta: C \rightarrow C \ot C$ called {\it coproduct},
$\varepsilon : C \rightarrow K$ called {\it counit}, and $\eta : K \rightarrow C$ called {\it coaugmentation},
satisfying coassociativity $(\Delta \ot \id_C) \circ \Delta = (\id_C \ot \Delta) \circ \Delta$,
counitality $(\varepsilon \ot \id_C) \circ \Delta = \id_C = (\id_C \ot \varepsilon) \circ \Delta$,
and $\varepsilon \circ \eta = \id_K$.
If $C$ is a coaugmented dg $K$-coalgebra then $C = K1_C \oplus \ol{C}$ where $1_C = \eta(1_K)$ and
$\ol{C} = \Ker \varepsilon$ called the {\it coaugmentation coideal} of $C$. We always identify $K1_C$ with $K$.

A {\it morphism} from a coaugmented dg $K$-coalgebra $(C,d_C,\Delta_C,\varepsilon_C,\eta_C)$ to a coaugmented dg $K$-coalgebra
$(C',d_{C'},\Delta_{C'},\varepsilon_{C'},\eta_{C'})$
is a $K$-bimodule complex morphism $f : C \rightarrow C'$
satisfying $\Delta_{C'} \circ f = (f \ot f) \circ \Delta_C$,
$\varepsilon_{C'} \circ f = \varepsilon_C$ and $f \circ \eta_C = \eta_{C'}$.

Let $C$ be a coaugmented dg $K$-coalgebra.
Define $\ol{\Delta} : \ol{C} \rightarrow \ol{C} \ot \ol C$ by
$\ol{\Delta}(c) = \Delta(c) - 1 \ot c - c \ot 1$ for all $c\in \ol{C}$, and further
$\ol{\Delta}^{(n)} : \ol{C} \rightarrow \ol{C}^{\ot n+1}$
by $\ol{\Delta}^{(0)}=\id_{\ol{C}}, \ \ol{\Delta}^{(1)}=\ol{\Delta},$ and $\ol{\Delta}^{(n)}
= (\ol{\Delta} \ot \id_{\ol{C}}^{\ot n-1}) \circ \ol{\Delta}^{(n-1)}$ for all $n \geq 2$.
Let $F_0C := K$ and $F_nC := K \oplus \Ker \ol{\Delta}^{(n)}$ for $n \geq 1$.
Then $F_nC$ is a dg $C$-bicomodule for all $n \geq 0$. The series
$F_0C \subseteq \cdots \subseteq F_nC \subseteq \cdots$ is called the {\it coradical series} of $C$.
A coaugmented dg $K$-coalgebra $C$ is {\it cocomplete} or {\it conilpotent} if $C = \bigcup\limits_{n\in\mathbb{N}_0} F_nC$.
Clearly, a locally finite, bounded above or below, augmented dg $K$-algebra $A$ is complete
if and only if its graded dual $A^\vee$ is a cocomplete coaugmented dg $K$-coalgebra.

\bigskip

\noindent{\bf Bar construction.} Let $A = K \oplus \ol{A}$ be an augmented dg $K$-algebra and
$$T(s\ol{A}) = \bigoplus\limits_{n\in\mathbb{N}_0} (s\ol{A})^{\ot n} = K \oplus s\ol{A} \oplus (s\ol{A})^{\ot 2} \oplus \cdots$$
the tensor graded $K$-coalgebra, where $s$ is the suspension functor which is also denoted by $[1]$ sometimes.
Write $[a_1|a_2|\cdots|a_n]$ for the homogeneous element $sa_1 \ot sa_2 \ot \cdots \ot sa_n \in (s\ol{A})^{\ot n} \subset T(s\ol{A})$. Let
$$d_0([a_1|\cdots|a_n]) = \sum\limits_{i=1}^{n} (-1)^{|a_1|+\cdots+|a_{i-1}|+i}\ [a_1|\cdots|d_A(a_i)|\cdots|a_n],$$
$$d_1([a_1|\cdots|a_n]) = \sum\limits_{i=1}^{n-1} (-1)^{|a_1|+\cdots+|a_{i}|+i-1}\ [a_1|\cdots|a_ia_{i+1}|\cdots|a_n].$$
Then $d:=d_0+d_1$ is a differential of $T(s\ol{A})$.
The cocomplete coaugmented dg $K$-coalgebra $BA:=(T(s\ol{A}),d)$ is called the {\it bar construction} of $A$.

\bigskip

\noindent{\bf Cobar construction.} Let $C = K \oplus \ol{C}$ be a cocomplete coaugmented dg $K$-coalgebra and
$$T(s^{-1}\ol{C}) = \bigoplus\limits_{n\in\mathbb{N}_0} (s^{-1}\ol{C})^{\ot n} = K \oplus s^{-1}\ol{C} \oplus (s^{-1}\ol{C})^{\ot 2} \oplus \cdots$$
the tensor graded $K$-algebra. Write $\langle c_1|c_2|\cdots|c_n\rangle$ for the homogeneous element
$s^{-1}c_1 \ot s^{-1}c_2 \ot \cdots \ot s^{-1}c_n \in (s^{-1}\ol{C})^{\ot n} \subset T(s^{-1}\ol{C}).$
Let
$$d_0(\langle c_1|\cdots|c_n \rangle) := \sum\limits_{i=1}^{n} (-1)^{|c_1|+\cdots+|c_{i-1}|+i}\ \langle c_1|\cdots|d_C(c_i)|\cdots|c_n \rangle,$$
$$d_1(\langle c_1|\cdots|c_n \rangle) = \sum\limits_{i=1}^{n} (-1)^{|c_1|+\cdots+|c_{i-1}|+|c_{i1}|+i}\ \langle c_1|\cdots|c_{i1}|c_{i2}|\cdots|c_n \rangle,$$
where $\ol{\Delta}(c_i) = c_{i1}\ot c_{i2}$. Note that we always omit $\sum$ and brackets in Sweedler's notation $\Delta(c) = \sum c_{(1)} \ot c_{(2)}$.
Then $d:=d_0+d_1$ is a differential of $T(s^{-1}\ol{C})$.
The augmented dg $K$-algebra $\Omega C:=(T(s^{-1}\ol{C}),d)$ is called the {\it cobar construction} of $C$.

\bigskip

\noindent{\bf Two-sided bar resolutions.} Let $A$ be an augmented dg $K$-algebra.
Define a differential $d$ on the graded $K$-bimodule $A \ot BA \ot A$ by $d := d_{A\ot BA\ot A} + \id_A\ot d^r-d^l\ot\id_A$,
where $d_{A\ot BA\ot A}$ is the differential of the tensor product $A \ot BA \ot A$ of dg $K$-bimodules,
$d^r : BA \ot A \to BA \ot A$ is the composition $(\id_{BA} \ot \mu) \circ (\id_{BA} \ot \pi \ot \id_A) \circ (\Delta \ot \id_A)$,
$d^l : A \ot BA \to A \ot BA$ is the composition $(\mu\ot \id_{BA}) \circ (\id_A \ot \pi \ot \id_{BA}) \circ (\id_A \ot \Delta)$.
Here, $\pi : BA\to A$ is the universal twisting morphism, i.e., the composition $BA \twoheadrightarrow s\ol{A} \stackrel{s^{-1}}{\cong} \ol{A} \hookrightarrow A$.
Then $(A \ot BA \ot A,d)$ is a semi-projective resolution of the dg $A$-bimodule $A$, i.e.,
$(A \ot BA \ot A,d)$ is a semi-projective dg $A$-bimodule and the composition
$$\tilde{\mu} : A \ot BA \ot A \xrightarrow{\id \ot \varepsilon \ot \id} A \ot K \ot A = A \ot A \xrightarrow{\mu} A$$
is a dg $A$-bimodule quasi-isomorphism, called the {\it two-sided bar resolution} of $A$.

\bigskip

\noindent{\bf Typical dg $K$-algebras.} A {\it typical dg $K$-algebra} is a locally finite augmented dg $K$-algebra $A$ which is either non-negative or
non-positive simply connected (i.e., $A_0=K$ and $A_{-1}=0$). The typicality of a dg $K$-algebra $A$
ensures that both the bar construction $BA$ and its graded dual $(BA)^\vee$ are locally finite.
Furthermore, $(BA)^\vee\cong \Omega(A^\vee)$ as dg $K$-algebras.

\subsection{(Co)Augmented $A_\infty$-(co)algebras}

$A_\infty$-algebras (= strongly homotopy associative algebras= sha algebras) were introduced by Stasheff in 1963
as the algebraic counterpart of his theory of $H$-spaces \cite{Sta63}.
We mainly refer to \cite{GetJon90,Kel01,Lef03,LuPalWuZha04,Her18HHA} for some knowledge on $A_\infty$-(co)algebras.

\bigskip

\noindent{\bf Augmented $A_\infty$-algebras.}
An {\it augmented $A_\infty$-algebra} $A=(A,\eta_A,\varepsilon_A,d)$ is a graded $K$-bimodule $A$
together with two graded $K$-bimodule morphisms of degree zero
$\eta_A : K \to A$ called {\it unit} and $\varepsilon_A : A \to K$ called {\it augmentation} such that $\varepsilon_A \circ \eta_A=\id_K$,
and a graded $K$-coderivation $d$ of degree $-1$ on the coaugmented tensor graded $K$-coalgebra $T(s\ol{A}) = \bigoplus\limits_{n\in\mathbb{N}_0}(s\ol{A})^{\ot n}$
where $\ol{A}:= \Ker(\varepsilon_A)$, such that $d\circ\eta_{T(s\ol{A})} = 0$ and $d^2 = 0$.
Let $1_A := \eta_A(1_K)$. Then $A = K1_A \oplus \ol{A}$ as graded $K$-bimodules. We always identify $K1_A$ with $K$.
The {\it bar construction} $BA$ of $A$ is the coaugmented dg $K$-coalgebra $T(s\ol{A})$ with differential $d$.
Since $BA$ is a cocomplete cofree coaugmented graded $K$-coalgebra,
the graded $K$-coderivation $d$ is uniquely determined by $\bar{d} := p_{s\ol{A}} \circ d$
where $p_{s\ol{A}} : BA \twoheadrightarrow s\ol{A}$ is the canonical projection.
Write $\bar{d} = \sum\limits_{n\in\mathbb{N}}\bar{d}_n$ where $\bar{d}_n : (s\ol{A})^{\ot n} \to s\ol{A}$.
For any $n \in \mathbb{N}$, define $\ol{m}_n = (-1)^ns^{-1} \circ \bar{d}_n \circ s^{\ot n} : \ol{A}^{\ot n} \to \ol{A}$,
and extend it to the graded $K$-bimodule morphism $m_n : A^{\ot n} \to A$
which is the composition $i\circ \ol{m}_n \circ p^{\ot n}$ if $n\ne 2$,
where $i:\ol{A}\hookrightarrow A$ and $p: A \twoheadrightarrow \ol{A}$ are the canonical inclusion and projection respectively,
and which is given by $m_2|_{\ol{A}^{\ot 2}} = \ol{m}_2$ and $m_2(\eta_A \ot \id_A) = \id_A = m_2(\id_A \ot \eta_A)$ if $n=2$.
An equivalent definition of augmented $A_\infty$-algebra is given by the data $(A,\{m_n\}_{n\in\mathbb{N}},\eta_A,\varepsilon_A)$
satisfying appropriate properties \cite{LuPalWuZha04}.

A {\it morphism} or {\it $A_\infty$-morphism} $f : A \to A'$ between two augmented $A_\infty$-algebras $A$ and $A'$
is a coaugmented dg $K$-coalgebra morphism $Bf : BA \to BA'$.
Since $BA'$ is a coaugmented tensor graded $K$-coalgebra,
the morphism $Bf$ is uniquely determined by its composition with the canonical projection $p'_{s\ol{A'}}: BA' \to s\ol{A'}$.
Write $F :=p'_{s\ol{A'}}\circ Bf = \sum\limits_{n\in\mathbb{N}} F_n$, where $F_n : (s\ol{A})^{\ot n} \to s\ol{A'}$.
For any $n \in\mathbb{N}$, define $\ol{f}_n = (-1)^{n-1}s^{-1} \circ F_n \circ s^{\ot n} : \ol{A}^{\ot n} \to \ol{A'}$,
and extend it to the map $f_n : A^{\ot n} \to A'$ which is the composition $i'\circ\ol{f}_n \circ p^{\ot n}$ if $n \ge 2$,
where $i':\ol{A'}\hookrightarrow A'$ and $p: A\twoheadrightarrow \ol{A}$ are the canonical inclusion and projection respectively,
and which is given by $f_1|_K = \id_K$ and $f_1|_{\ol{A}} = \ol{f}_1$ if $n=1$.
An equivalent definition of augmented $A_\infty$-algebra morphism is given by the family of maps $\{f_n\}_{n\in\mathbb{N}}$
satisfying appropriate properties \cite{LuPalWuZha04}.

Note that $f_1$ is a dg $K$-bimodule morphism from $(A,m_1)$ to $(A',m'_1)$.
An augmented $A_\infty$-algebra morphism $f:A\to A'$ is {\it strict} if $f_n=0$ for all $n\ge 2$,
and it is a {\it quasi-isomorphism} if $f_1: (A,m_1) \to (A',m'_1)$ is a dg $K$-bimodule quasi-isomorphism.
The {\it identity morphism} is the strict morphism $f$ with $f_1=\id$.

\bigskip

\noindent{\bf Coaugmented $A_\infty$-coalgebras.}
A {\it coaugmented $A_\infty$-coalgebra} $C=(C,\varepsilon_C,\eta_C, \linebreak d)$ is a graded $K$-bimodule $C$
together with two graded $K$-bimodule morphisms of degree zero
$\varepsilon_C : C \to K$ called {\it counit} and $\eta_C : K \to C$ called {\it coaugmentation} such that $\varepsilon_C \circ \eta_C=\id_K$,
and a graded $K$-derivation $d$ of degree $-1$ on the augmented tensor graded $K$-algebra $T(s^{-1}\ol{C})$ where $\ol{C} := \Ker(\varepsilon_C)$
such that $\varepsilon_{T(s^{-1}\ol{C})} \circ d = 0$ and $d^2 = 0$.
Let $1_C = \eta_C(1_K)$. Then $C = K 1_C \oplus \ol{C}$ as graded $K$-bimodules. We always identify $K1_C$ with $K$.
The {\it cobar construction} $\Omega C$ of $C$ is the augmented dg $K$-algebra $T(s^{-1}\ol{C})$ with differential $d$.
Since $\Omega C$ is a free augmented graded $K$-algebra, $d$ is uniquely determined by its restriction
to $s^{-1}\ol{C}$, which is denoted by $\bar{d} = \sum\limits_{n\in\mathbb{N}} \bar{d}_n$ where $\bar{d}_n : s^{-1}\ol{C} \to (s^{-1}\ol{C})^{\ot n}$.
For any $n\in\mathbb{N}$, define $\ol{\Delta}_n = (-1)^{\frac{n(n-1)}{2}-1}s^{\ot n}\circ \bar{d}_n \circ s^{-1}: \ol{C} \to \ol{C}^{\ot n}$,
and extend it to the $K$-bimodule morphism $\Delta_n : C \to C^{\ot n}$
which is the composition of the canonical projection $C \to \ol{C}$, $\ol{\Delta}_n$
and the injection $\ol{C}^{\ot n} \to C^{\ot n}$ if $n \ne 2$,
and which is given by $\Delta_2(1_C) = 1_C \ot 1_C$
and $\Delta_2(\bar{c}) = \ol{\Delta}_2(\bar{c}) + 1_C \ot \bar{c} + \bar{c}\ot 1_C$ for all $\bar{c}\in\ol{C}$ if $n=2$.
An equivalent definition of coaugmented $A_\infty$-coalgebra is given by the data $(C,\{\Delta_n\}_{n\in\mathbb{N}},\varepsilon_C,\eta_C)$
satisfying appropriate properties \cite{Her18HHA}.

A {\it morphism} or {\it $A_\infty$-morphism} $f:C\to C'$ between two coaugmented $A_\infty$-coalgebras $C$ and $C'$
is a morphism of augmented dg $K$-algebras $\Omega f :\Omega C\to \Omega C'$.
Since $\Omega C$ is a free augmented graded $K$-algebra, the morphism $\Omega f$ is uniquely determined
by its restriction to $s^{-1}\ol{C}$, which we denote by $F = \sum\limits_{n\in\mathbb{N}}F_n$,
where $F_n : s^{-1}\ol{C} \to (s^{-1}\ol{C'})^{\ot n}$.
For any $n\in\mathbb{N}$, define $\ol{f}_n = (-1)^{\frac{n(n-1)}{2}}s^{\ot n} \circ F_n\circ s^{-1} : \ol{C}\to \ol{C'}^{\ot n}$,
and extend to the map $f_n : C\to (C')^{\ot n}$ which is the composition of the canonical projection $C \to\ol{C}$, $\ol{f}_n$ and
the injection $\ol{C'}^{\ot n}\to (C')^{\ot n}$ if $n \ge 2$,
and which is given by $f_1|_K = \id_K$ and $f_1|_{\ol{C}} = \ol{f}_1$ if $n=1$.
An equivalent definition of coaugmented $A_\infty$-coalgebra morphism is given by the family of maps $\{f_n\}_{n\in\mathbb{N}}$
satisfying appropriate properties \cite{Her18HHA}.
Note that $f_1$ is a dg $K$-bimodule morphism from $(C,\Delta_1)$ to $(C',\Delta'_1)$.
A coaugmented $A_\infty$-coalgebra morphism $f:C\to C'$ is {\it strict} if $f_n=0$ for all $n\ge 2$,
and it is a {\it quasi-isomorphism} if $f_1: (C,\Delta_1) \to (C',\Delta'_1)$ is a dg $K$-bimodule quasi-isomorphism.

\bigskip

\noindent{\bf $A_\infty$-bimodules.}
Let $A$ be an augmented $A_\infty$-algebra. An {\it $A_\infty$-bimodule over $A$} or {\it $A_\infty$-$A$-bimodule}
$M=(M,d)$ is a graded $K$-bimodule $M$ and a graded $K$-bicoderivation $d$ of degree $-1$ on the graded $BA$-bicomodule $BA \ot M\ot BA$
compatible with the differential of $BA$ such that $d^2= 0$.
Denote by $\widehat{M}$ the dg $BA$-bicomodule $BA \ot M\ot BA$ with differential $d$.
Since $\widehat{M}$ is a cofree graded bicomodule, the graded $K$-bicoderivation $d$ is uniquely determined by its composition
with $\varepsilon_{BA}\ot\id_{M}\ot\varepsilon_{BA}$,
which is denoted by $\bar{d}=\sum\limits_{p,q\in\mathbb{N}_0}\bar{d}_{p,q}$
where $\bar{d}_{p,q} : (s\ol{A})^{\ot p}\ot M\ot(s\ol{A})^{\ot q}\to M$ for all $p,q\in\mathbb{N}_0$.
Define $m_{p,q} : A^{\ot p}\ot M\ot A^{\ot q}\to M$ which is the composition of the canonical projection
$A^{\ot p}\ot M\ot A^{\ot q} \to \ol{A}^{\ot p}\ot M\ot\ol{A}^{\ot q}$ and
$(-1)^q\ \bar{d}_{p,q} \circ (s^{\ot p}\ot \id_M\ot s^{\ot q})$  if $p+q\ne 1$,
and which is given by $m_{0,1}\circ(\id_M \ot \eta_A)=\id_M =m_{1,0}\circ (\eta_A \ot \id_M)$,
$m_{0,1}\circ (\id_M \ot i_A) = -\bar{d}_{0,1}\circ (\id_M \ot s)$ and
$m_{1,0}\circ (i_A \ot \id_M) = \bar{d}_{1,0}\circ (s \ot \id_M)$ if $p+q=1$,
where $i_A : \ol{A}\hookrightarrow A$ is the canonical inclusion.
An equivalent definition of $A_\infty$-bimodule is given by the data $(M,\{m_{p,q}\}_{p,q\in\mathbb{N}_0})$
satisfying appropriate properties.

Let $A$ be an augmented $A_\infty$-algebra defined by operators $\{m_n: A^{\ot n}\to A\}_{n\in\mathbb{N}}$.
Then $A$ itself is an $A_\infty$-$A$-bimodule defined by the operators $\{m_{p,q}: A^{\ot p}\ot A\ot A^{\ot q} \to A\}_{p,q\in\mathbb{N}_0}$ given by
$m_{p,q}(a_1\ot\cdots\ot a_p\ot a_{p+1}\ot a_{p+2}\ot\cdots\ot a_{p+q+1}):= m_{p+q+1}(a_1\ot\cdots\ot a_{p+q+1})$.

A {\it morphism of $A_\infty$-bimodules $f:M\to N$} between two $A_\infty$-bimodules $M$ and $N$
is a morphism of dg $BA$-bicomodules $\widehat{f} : \widehat{M}\to \widehat{N}$.
Since $\widehat{N}$ is a cofree graded bicomodule, $\widehat{f}$ is uniquely
determined by its composition with $\varepsilon_{BA} \ot\id_N\ot \varepsilon_{BA}$,
which is written as $F=\sum\limits_{p,q\in\mathbb{N}_0}F_{p,q}$ where $F_{p,q} : (s\ol{A})^{\ot p} \ot M\ot(s\ol{A})^{\ot q} \to N$.
Define $f_{p,q} : A^{\ot p}\ot M\ot A^{\ot q}\to N$ as the composition of the canonical projection
$A^{\ot p}\ot M\ot A^{\ot q} \to \ol{A}^{\ot p}\ot M\ot \ol{A}^{\ot q}$ and $(-1)^qF_{p,q}\circ(s^{\ot p}\ot \id_M\ot s^{\ot q})$.
An equivalent definition of $A_\infty$-bimodule morphism is given by the family of maps $\{f_{p,q}\}_{p,q\in\mathbb{N}_0}$
satisfying appropriate properties.
An $A_\infty$-bimodule morphism $f$ is {\it strict} if $f_{p,q}=0$ for all $(p,q)\ne(0,0)$.

Similarly, one can define left (resp. right) $A_\infty$-modules over an augmented $A_\infty$-algebra $A$.
They correspond to left (resp. right) dg $BA$-comodules.

The following result is well-known.

\begin{lemma}\label{Lemma-A-bimod-iso}
Let $A$ be an augmented $A_\infty$-algebra and $f:M\to N$ an $A_\infty$-$A$-bimodule morphism.
Then $\widehat{f}: \widehat{M}\rightarrow \widehat{N}$ is a dg $BA$-bicomodule isomorphism
if and only if $f_{0,0}$ is an isomorphism of dg $K$-bimodules.
\end{lemma}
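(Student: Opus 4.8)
The plan is to compare the cofree $BA$-bicomodules $\widehat M$ and $\widehat N$ by means of their filtrations by outer tensor length and to reduce the statement to the behaviour of $\widehat f$ on the associated graded objects, which turns out to be completely governed by $f_{0,0}$.

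Concretely, on the graded $BA$-bicomodule $\widehat M = BA\ot M\ot BA = \bigoplus_{p,q\in\mathbb{N}_0}(s\ol A)^{\ot p}\ot M\ot(s\ol A)^{\ot q}$ I would introduce the increasing filtration $\mathcal F^n\widehat M:=\bigoplus_{p+q\leq n}(s\ol A)^{\ot p}\ot M\ot(s\ol A)^{\ot q}$, and likewise $\mathcal F^n\widehat N$; these are bounded below with $\mathcal F^{-1}=0$, exhaustive, and $\mathcal F^0\widehat M=M$. First I would check that the differential $d$ of $\widehat M$ preserves $\mathcal F^\bullet$: it is the sum of the term coming from the differential of $BA$, whose only summand changing the outer length is $d_1$ (which decreases it), and the term built from the structure maps $m_{p,q}$, which removes $p+q$ of the outer tensor factors and hence preserves the length when $p+q=0$ and strictly decreases it when $p+q\geq 1$; consequently $\gr^0\widehat M=(M,m_{0,0})$ is a dg $K$-bimodule, and similarly for $\widehat N$. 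Next, because $\widehat f$ is a morphism of $BA$-bicomodules between cofree bicomodules, the image of $(s\ol A)^{\ot P}\ot M\ot(s\ol A)^{\ot Q}$ lies in $\bigoplus_{P'\leq P,\,Q'\leq Q}(s\ol A)^{\ot P'}\ot N\ot(s\ol A)^{\ot Q'}$, so $\widehat f$ is filtered; and tracing the dictionary between $\widehat f$, the family $\{F_{p,q}\}$ and the family $\{f_{p,q}\}$, the length-preserving part of $\widehat f$ on $(s\ol A)^{\ot P}\ot M\ot(s\ol A)^{\ot Q}$ is exactly $\id^{\ot P}\ot F_{0,0}\ot\id^{\ot Q}=\id^{\ot P}\ot f_{0,0}\ot\id^{\ot Q}$. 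Hence $\gr^n\widehat f=\bigoplus_{p+q=n}\id^{\ot p}\ot f_{0,0}\ot\id^{\ot q}$; in particular $\gr^0\widehat f=f_{0,0}$, which is thereby a morphism of dg $K$-bimodules.

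For the ``if'' implication, assume $f_{0,0}$ is an isomorphism of dg $K$-bimodules. Then each $\id^{\ot p}\ot f_{0,0}\ot\id^{\ot q}$ is an isomorphism, so $\gr\widehat f$ is an isomorphism of complexes; since $\mathcal F^\bullet$ is exhaustive and bounded below on both sides, the standard filtration argument (minimality of the filtration degree of a hypothetical nonzero kernel element gives injectivity, induction on the filtration degree gives surjectivity) shows $\widehat f$ is bijective, and as the set-theoretic inverse of a bijective morphism of dg $BA$-bicomodules is again such a morphism, $\widehat f$ is an isomorphism of dg $BA$-bicomodules. For the ``only if'' implication, assume $\widehat f$ is an isomorphism of dg $BA$-bicomodules; its inverse is again a morphism of $BA$-bicomodules, hence filtered by the same triangularity, so $\gr$ is functorial on these maps and $\gr\widehat f$ is an isomorphism with inverse $\gr(\widehat f^{-1})$; extracting the degree-$0$ component gives that $\gr^0\widehat f=f_{0,0}$ is an isomorphism of dg $K$-bimodules.

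The main obstacle is purely bookkeeping: verifying that $d$ and every morphism of $BA$-bicomodules are filtered for $\mathcal F^\bullet$, and that the length-preserving component of $\widehat f$ is precisely $\id^{\ot P}\ot f_{0,0}\ot\id^{\ot Q}$ through the sign-laden correspondence between $\widehat f$ and $\{f_{p,q}\}$. Once this triangularity with respect to outer tensor length is in place, both implications follow formally from the associated-graded computation.
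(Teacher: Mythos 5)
Your proof is correct and rests on the same mechanism as the paper's: the triangularity of $\widehat{f}$ with respect to the outer tensor length $(p,q)$, with diagonal part built from $f_{0,0}$. The paper phrases this as an explicit recursive construction of the components $G_{p,q}$ of the inverse (solving $F_{0,0}\circ G_{p,q}=-\sum_{i+j>0}F_{i,j}\circ(\id^{\ot i}\ot G_{p-i,q-j}\ot\id^{\ot j})$), which is exactly the concrete form of your filtration/associated-graded argument, so the two proofs are essentially the same.
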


\begin{proof}
{\it Necessity.} Assume that $g:N\to M$ is an $A_\infty$-$A$-bimodule morphism
such that $\widehat{g}:\widehat{N}\rightarrow \widehat{M}$ is the inverse of $\widehat{f}$.
Then $\widehat{f}\circ\widehat{g}=\mathrm{id}_{\widehat{N}}$ and $\widehat{g}\circ\widehat{f}=\mathrm{id}_{\widehat{M}}$.
Thus the families $\{F_{p,q}\}_{p,q \in \mathbb{N}_0}$ and $\{G_{p,q}\}_{p,q\in \mathbb{N}_0}$ satisfy $F_{0,0}\circ G_{0,0}=\mathrm{id}_{N}$,
$$\sum\limits_{0\leq i\leq p, 0\leq j\leq q}F_{i,j}\circ (\mathrm{id}^{\ot i}\ot G_{p-i,q-j}\ot \mathrm{id}^{\ot j})=0$$
for all $(p,q)\ne (0,0)$, $G_{0,0}\circ F_{0,0}=\mathrm{id}_{M}$, and
$$\sum\limits_{0\leq i\leq p, 0\leq j\leq q}G_{i,j}\circ (\mathrm{id}^{\ot i}\ot F_{p-i,q-j}\ot \mathrm{id}^{\ot j})=0$$
for all $(p,q)\ne (0,0)$. In particular, $f_{0,0}=F_{0,0}$ is an isomorphism of dg $K$-bimodules.

{\it Sufficiency.} Assume that $f_{0,0}$ is an isomorphism of dg $K$-bimodules and $g_{0,0}: N\rightarrow M$ is its inverse.
Let $G_{0,0}=g_{0,0}$. By the equation
$$F_{0,0}\circ G_{p,q}+\sum\limits_{0\leq i\leq p,\ 0\leq j\leq q,\ i+j>0}F_{i,j}\circ (\mathrm{id}^{\ot i}\ot G_{p-i,q-j}\ot \mathrm{id}^{\ot j})=0,$$
we have
$$G_{p,q}=- G_{0,0}\circ(\sum\limits_{0\leq i\leq p,\ 0\leq j\leq q,\ i+j>0}F_{i,j}\circ (\mathrm{id}^{\ot i}\ot G_{p-i,q-j}\ot \mathrm{id}^{\ot j})). $$
Thus we can construct the family of graded $K$-bimodules $\{G_{p,q}\}_{p,q\in \mathbb{N}_{0}}$ inductively.
The family $\{G_{p,q}\}_{p,q\in \mathbb{N}_0}$ gives a graded $BA$-bicomodule morphism $\widehat{g}: \widehat{N}\rightarrow \widehat{M}$
and it satisfies $\widehat{f}\circ \widehat{g}=\mathrm{id}_{\widehat{N}}$.
Similarly, we can construct a graded $BA$-bicomodule morphism $\widehat{g^{\prime}}: \widehat{N}\rightarrow \widehat{M}$
such that $\widehat{g^\prime}\circ \widehat{f}=\mathrm{id}_{\widehat{M}}$,
and we have $\widehat{g}=\widehat{g^\prime}\circ \widehat{f}\circ \widehat{g}=\widehat{g^\prime}$.
Thus $\widehat{g}$ is the inverse of the graded $BA$-bicomodule morphism $\widehat{f}$ and it is compatible with the differential naturally.
Therefore, $\widehat{g}$ is the inverse of the dg $BA$-bicomodule morphism $\widehat{f}$.
\end{proof}

\bigskip

\noindent {\bf Two-sided bar complexes.}
Let $A$ be an augmented $A_\infty$-algebra. Then $\wt{A}:=A\ot BA\ot A$ is an $A_\infty$-$A$-bimodule
which is defined by the differential $d$ on the graded $BA$-bicomodule $BA\ot \wt{A}\ot BA=BA\ot A\ot BA\ot A\ot BA$
$$\begin{array}{ll}
d:=& d_{BA}\ot\id^{\ot 4}+\id^{\ot 2}\ot d_{BA}\ot\id^{\ot 2} +\id^{\ot 4}\ot d_{BA} \vspace{2mm}\\
& +(\id_{BA}\ot \bar{d}\ot\id_{BA}\ot\id_A\ot\id_{BA})\circ(\Delta_{BA}\ot\id_A\ot\Delta_{BA}\ot\id_A\ot\id_{BA}) \vspace{2mm}\\
& +(\id_{BA}\ot\id_A\ot\id_{BA}\ot\bar{d}\ot\id_{BA})\circ(\id_{BA}\ot\id_A\ot\Delta_{BA}\ot\id_A\ot\Delta_{BA}).
\end{array}$$
Moreover, there is an $A_\infty$-$A$-bimodule quasi-isomorphism $\wt{\mu}: \wt{A} \to A$
defined by the maps $\{\wt{\mu}_{p,q}: A^{\ot p}\ot \wt{A} \ot A^{\ot q} \to A\}_{p,q\in \mathbb{N}_0}$,
where the restriction of $\wt{\mu}_{p,q}$ on $A^{\ot p}\ot A\ot (s\overline{A})^{\ot r}\ot A\ot A^{\ot q}$ is given by
$(-1)^{p+rq+\frac{r(r-1)}{2}}m_{p+q+r+2}\circ (\mathrm{id}^{\ot p+1}\ot (s^{-1})^{\ot r}\ot \mathrm{id}^{\ot q+1})$.
For this, we need to show that the $K$-bimodule complex morphism $\wt{\mu}_{0,0}:(\wt{A}, m_{0,0}^{\wt{A}})\to (A, m_{0,0}^A=m_1)$ is a quasi-isomorphism.
Define a $k$-module complex morphism $\nu: A\to \wt{A}$ by $\nu(a):=1_A\ot 1_{BA}\ot a$,
and a graded $k$-module morphism of degree one $s: \wt{A}\to \wt{A}$ by $s(a_0\ot [a_1|\cdots|a_n]\ot a_{n+1}):=1_A\ot[a_0|a_1|\cdots|a_n]\ot a_{n+1}$.
Then one can check that $\wt{\mu}_{0,0}\circ \nu=\id_A$ and $\nu\circ\wt{\mu}_{0,0}-\id_{\wt{A}}=m_{0,0}^{\wt{A}}\circ s+s\circ m_{0,0}^{\wt{A}}$,
i.e., $\wt{\mu}_{0,0}$ is a $k$-module complex, and thus a $K$-bimodule complex, quasi-isomorphism.

\bigskip

\noindent {\bf Suspensions.} (\cite[2.3.2]{Her18})
Let $M$ be an $A_\infty$-bimodule over an augmented $A_\infty$-algebra $A$.
The {\it suspension} $sM$ or $M[1]$ of $M$ is the $A_\infty$-$A$-bimodule
which is defined by the differential $d$ on the graded $BA$-bicomodule $BA\ot sM\ot BA$
uniquely determined by the following commutative diagram:
$$\xymatrixcolsep{6pc}\xymatrixrowsep{3pc}\xymatrix{
BA\ot M\ot BA \ar[r]^-{d_{\widehat{M}}} \ar[d]^-{\id_{BA}\ot s\ot \id_{BA}}
& BA\ot M\ot BA \ar[r]^-{\varepsilon_{BA}\ot\id_M\ot\varepsilon_{BA}} \ar[d]^-{\id_{BA}\ot s\ot \id_{BA}} & M \ar[d]^-s \\
BA\ot sM\ot BA \ar[r]^-{-d} & BA\ot sM\ot BA \ar[r]^-{\varepsilon_{BA}\ot\id_{sM}\ot\varepsilon_{BA}} & sM
}$$

\bigskip

\noindent{\bf Graded duals.} (cf. \cite[2.3.2]{Her18})
Let $M$ be an $A_\infty$-bimodule over an augmented $A_\infty$-algebra $A$.
Then the {\it graded dual} $M^\vee$ of $M$ is the $A_\infty$-$A$-bimodule
which is defined by the differential $d$ on the graded $BA$-bicomodule $BA\ot M^\vee\ot BA$
uniquely determined by the following commutative diagram:

{\footnotesize $$\xymatrixcolsep{5.7pc}\xymatrixrowsep{2pc}\xymatrix{
M^\vee\ot BA\ot M\ot BA \ar[r]^-{\id_{M^\vee}\ot d_{\widehat{M}}} \ar[dd]^-{S_{4123}}
& M^\vee\ot BA\ot M\ot BA \ar[r]^-{\id_{M^\vee}\ot \varepsilon_{BA}\ot\id_M\ot\varepsilon_{BA}} \ar[dd]^-{S_{4123}} & M^\vee\ot M \ar[d]_-{\mathrm{ev}_M} \\
&&k\\
BA\ot M^\vee\ot BA\ot M \ar[r]^-{-d\ot\id_M} & BA\ot M^\vee\ot BA\ot M \ar[r]^-{\varepsilon_{BA}\ot\id_{M^\vee}\ot\varepsilon_{BA}\ot\id_M}
& M^\vee\ot M \ar[u]^-{\mathrm{ev}_M}
}$$ }
where $\mathrm{ev}_M: M^\vee\ot M\to k$ is the usual evaluation map.

\bigskip

\noindent{\bf Koszul duals.}
An {\it $A$-finite $A_\infty$-algebra} $A$ is a locally finite, bounded below or bounded above, augmented $A_\infty$-algebra $A$ {\it of finite products},
i.e., $m_n=0$ for $n\gg 0$, or equivalently, the composition $\bar{d}:=p_{s\ol{A}}\circ d_{BA}: BA\xrightarrow{d_{BA}} BA\xrightarrow{p_{s\ol{A}}} s\ol{A}$
satisfies $\bar{d}_n=0$ for $n\gg 0$.
An {\it $A$-finite $A_\infty$-coalgebra} $C$ is a locally finite, bounded above or bounded below, coaugmented $A_\infty$-coalgebra $C$ {\it of finite coproducts},
i.e., $\Delta_n=0$ for $n\gg 0$, or equivalently, the composition $\bar{d}:=d_{\Omega C} \circ i_{s^{-1}\ol{C}}:
s^{-1}\ol{C} \xrightarrow{i_{s^{-1}\ol{C}}} \Omega C\xrightarrow{d_{\Omega C}} \Omega C$ satisfies $\bar{d}_n=0$ for $n\gg 0$.

Let $(A,\{m_n\}_{n\in\mathbb{N}},\eta_A,\varepsilon_A)$ be an $A$-finite augmented $A_\infty$-algebra.
Then we have graded $K$-bimodule isomorphism $(A^{\ot n})^\vee \cong (A^\vee)^{\ot n}$ for all $n\in \mathbb{N}$,
Thus the graded dual $(A^\vee,\{\Delta_n\},\varepsilon_{A^\vee},\eta_{A^\vee})$ of $A$ is an $A$-finite $A_\infty$-coalgebra
where $\Delta_n=(-1)^nm_n^\vee,\ \eta_{A^\vee}=(\varepsilon_A)^\vee$ and $\varepsilon_{A^\vee}={\eta_A^\vee}$.
The {\it Koszul dual} of an $A$-finite $A_\infty$-algebra $A$ is $A^\dag := \Omega(A^\vee)$.

\begin{remark}{\rm
Let $A$ be an augmented $A_\infty$-algebra.
The {\it Koszul dual} of $A$ is usually defined to be the augmented dg $K$-algebra $(BA)^\vee$ (see \cite{LuPalWuZha08}).
This definition of Koszul dual coincides with ours in many important situations.
An augmented $A_\infty$-algebra $A$ is {\it typical} if $A$ is a locally finite augmented $A_\infty$-algebra
which is either non-negative or non-positive simply connected.
If $A$ is a typical $A_\infty$-algebra of finite products then $A^\vee$ is a coaugmented $A_\infty$-coalgebra and $(BA)^\vee \cong \Omega(A^\vee)$.
}\end{remark}

\section{Hochschild extensions}

In this section, we introduce the Hochschild extensions of an augmented dg $K$-algebra by a dg bimodule and a Hochschild 2-cocycle.
Furthermore, we focus on the Hochschild extensions of an augmented dg $K$-algebra by a shift of its graded dual dg bimodule and a Hochschild 2-cocycles,
and show that exact Hochschild extensions are symmetric Hochschild extensions.

\subsection{Hochschild (co)homology}

Hochschild (co)homology is necessary for studying Hochschild extensions.

\bigskip

\noindent {\bf Hochschild cohomology of dg $K$-algebras.}
Let $A$ be an augmented dg $K$-algebra and $M$ a dg $A$-bimodule.
The {\it Hochschild cochain complex of $A$ with coefficients in $M$} is
$C^\bullet(A,M) := \Hom_{K^e}(BA,M) \cong \Hom_{A^e}(A \ot BA\ot A, M) \cong \RHom_{A^e}(A,M)$.
Its cohomology $H^\bullet(A,M)$ is called the {\it Hochschild cohomology of $A$ with coefficients in $M$}.
Note that the bullet $\bullet$ indicates weight in $C^\bullet(A,M)$ and degree in $H^\bullet(A,M)$.
More precisely, $C^\bullet(A,M) = \prod\limits_{n \in \mathbb{N}_0} C^n(A,M)$ where $C^n(A,M) := \Hom_{K^e}((s\ol{A})^{\ot n},M)$,
and the differential of $C^\bullet(A,M)$ is $\dz = \dz_0+\dz_1$ where $\dz_0$ is the {\it inner differential} given by
$$\dz_0(f)[a_1|\cdots|a_n] := d_Mf[a_1|\cdots|a_n] + \sum_{i=1}^n(-1)^{\varepsilon_{i-1}+|f|}\ f[a_1|\cdots|d_Aa_i|\cdots|a_n],$$
and $\dz_1$ is the {\it external differential} given by
$$\begin{array}{ll}
\dz_1(f)[a_1|\cdots|a_{n+1}] := & (-1)^{|f|(|a_1|+1)}\ a_1f[a_2|\cdots|a_{n+1}] \vspace{2mm}\\
& \quad + \sum\limits_{i=1}^n(-1)^{|f|+\varepsilon_i}\ f[a_1|\cdots|a_ia_{i+1}|\cdots|a_{n+1}] \vspace{2mm}\\
& \quad\quad + (-1)^{|f|+\varepsilon_n+1}\ f[a_1|\cdots|a_n]a_{n+1},
\end{array}$$
for all $f \in C^n(A,M)$.
Here $\varepsilon_i := \sum\limits_{j=1}^{i}(|a_j|+1)$.

In the case of $M=A$, $C^\bullet(A,A)$ is called the {\it Hochschild cochain complex of $A$} and denoted by $C^\bullet(A)$.
Its cohomology $H^\bullet(A,A)$ is called the {\it Hochschild cohomology of $A$}, denoted by $HH^\bullet(A)$.

\bigskip

\noindent {\bf Hochschild homology of dg $K$-algebras.}
Let $A$ be an augmented dg $K$-algebra and $M$ a dg $A$-bimodule.
The {\it Hochschild chain complex of $A$ with coefficients in $M$} is
$C_\bullet(A,M) := M\ot_{K^e}BA \cong M \ot_{A^e}(A\ot BA\ot A) \cong M \ot^L_{A^e} A$.
Its homology $H_\bullet(A,M)$ is called the {\it Hochschild homology of $A$ with coefficients in $M$}.
Note that the bullet $\bullet$ indicates weight in $C_\bullet(A,M)$ and degree in $H_\bullet(A,M)$.
More precisely, $C_\bullet(A,M) = \bigoplus\limits_{n \in \mathbb{N}_0} C_n(A,M)$ with $C_n(A,M) := M \ot _{K^e}(s\ol{A})^{\ot n}$,
and the differential of $C_\bullet(A,M)$ is $b := b_0 + b_1$ where $b_0$ is the {\it inner differential} given by
$$b_0(m \ot [a_1|\cdots|a_n]) := d_Mm \ot [a_1|\cdots|a_n] + \sum_{i=1}^n(-1)^{\eta_{i-1}}\ m \ot [a_1|\cdots|d_Aa_i|\cdots|a_n],$$
and $b_1$ is the {\it external differential} given by:
$$\begin{array}{ll}
b_1(m \ot [a_1|\cdots|a_n]) := & (-1)^{|m|+1}\ ma_1 \ot [a_2|\cdots|a_n] \vspace{2mm}\\
& \quad + \sum\limits_{i=1}^{n-1}(-1)^{\eta_i}\ m \ot [a_1|\cdots|a_ia_{i+1}|\cdots|a_n] \vspace{2mm}\\
& \quad\quad + (-1)^{(\eta_{n-1}+1)(|a_n|+1)}\ a_nm \ot [a_1|\cdots|a_{n-1}]
\end{array}$$
where $\eta_i := |m|+1+\sum\limits_{j=1}^i(|a_j|+1)$.

In the case of $M=A$, $C_\bullet(A,A)$ is called the {\it Hochschild chain complex of $A$} and denoted by $C_\bullet(A)$.
Its cohomology $H_\bullet(A,A)$ is called the {\it Hochschild homology of $A$}, denoted by $HH_\bullet(A)$.

\bigskip

\noindent{\bf Hochschild homology of $A_\infty$-algebras.}
Let $A$ be an augmented $A_\infty$-algebra and $M$ an $A_\infty$-$A$-bimodule.
The {\it Hochschild chain complex of $A$ with coefficients in $M$} is the complex $C_\bullet(A,M) := M\ot_{K^e}BA$ endowed with differential $d=d_1+d_2$,
where $d_1=\id_M\ot d_{BA}$ and $d_2$ is the composition
$M\ot_{K^e}BA\xrightarrow{\id_M\ot \Delta^{(2)}}M\ot_{K^e} (BA)^{\ot 3}\xrightarrow{S_{4123}}\widehat{M}\ot_{K^e}BA\xrightarrow{\bar{d} \ot \id_M}M\ot_{K^e}BA$
where $\bar{d}=(\varepsilon_{BA}\ot\id_M\ot \varepsilon_{BA})\circ d_{\widehat{M}}$
and $S_{4123} : V_1\ot V_2\ot V_3\ot V_4 \to V_4 \ot V_1\ot V_2\ot V_3,\ v_1\ot v_2\ot v_3\ot v_4 \mt (-1)^{|v_4|(|v_1|+|v_2|+|v_3|)}v_4 \ot v_1\ot v_2\ot v_3$.
Its homology $H_\bullet(A,M)$ is called the {\it Hochschild homology of $A$ with coefficients in $M$}.
In the case of $M=A$, $C_\bullet(A,A)$ is called the {\it Hochschild chain complex} of the $A_\infty$-algebra $A$ and denoted by $C_\bullet(A)$.
Its homology $H_\bullet(A,A)$ is called the {\it Hochschild homology of $A$} and denoted by $HH_\bullet(A)$.

An $A_\infty$-$A$-bimodule morphism $f:M\rightarrow N$ induces a dg $k$-module morphism $C_\bullet(A,f): C_\bullet(A,M)\rightarrow C_\bullet(A,N)$
which is the composition
$M\ot_{K^e}BA \xrightarrow{\id_M\ot \Delta^{(2)}} M\ot_{K^e}(BA)^{\ot 3} \xrightarrow{S_{4123}} \widehat{M}\ot_{K^e}BA
\xrightarrow{\widehat{f}\ot \id_{BA}} \widehat{N}\ot_{K^e}BA \xrightarrow{\varepsilon_{BA}\ot\id_N\ot\varepsilon_{BA}\ot \id_{BA}} N\ot_{K^e}BA$.

\bigskip

By the same proof as \cite[Theorem 2.10]{Mes16}, we can obtain the following result:

\begin{lemma} \label{Lemma-A-Inf-Iso-0} {\rm(\cite[Lemma 5.3]{GetJon90} and \cite[Theorem 2.10]{Mes16})}
Let $A$ be an augmented $A_\infty$-algebra and $f: M\rightarrow N$ an $A_\infty$-$A$-bimodule morphism.
If the $K$-bimodule complex morphism $f_{0,0}: (M, m_{0,0}^M)\rightarrow (N, m_{0,0}^N)$ is a quasi-isomorphism,
then the induced $k$-module complex morphism $C_{\bullet}(A,f):C_{\bullet}(A,M)\rightarrow C_{\bullet}(A,N)$ is a quasi-isomorphism.
\end{lemma}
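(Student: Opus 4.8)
The plan is to run the standard bar-length filtration argument on the Hochschild chain complex.

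First I would filter $C_\bullet(A,M)$ by the number of bar tensor factors: for $p\ge 0$ set
$$F_p\,C_\bullet(A,M):=\bigoplus_{0\le n\le p}M\ot_{K^e}(s\ol{A})^{\ot n}\ \subseteq\ C_\bullet(A,M),\qquad F_{-1}\,C_\bullet(A,M):=0.$$
I claim each $F_p$ is a subcomplex and that $C_\bullet(A,f)$ preserves the filtration. For the differential $d=d_1+d_2$ this is read off the definitions: $d_1=\id_M\ot d_{BA}$ splits as $\id_M\ot d_0$ (the internal part, which keeps the bar length $n$ fixed) plus the multiplication part (which lowers $n$ by one); and $d_2$ is built from the iterated coproduct of $BA$ followed by a component $\bar d_{p,q}$ of the bimodule structure map that absorbs $p+q$ bar letters, so it lowers $n$ by $p+q\ge 0$. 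Hence $d$ never raises $n$, the $F_p$ are nested subcomplexes, and the filtration is exhaustive with $F_{-1}=0$. The identical bookkeeping for $C_\bullet(A,f)$ --- assembled from the iterated coproduct followed by the components $F_{p,q}$ of $\widehat f$, which absorb $p+q$ bar letters --- shows $C_\bullet(A,f)\bigl(F_p\,C_\bullet(A,M)\bigr)\subseteq F_p\,C_\bullet(A,N)$.

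Next I would compute the associated graded. On $\mathrm{gr}_p\,C_\bullet(A,M)=M\ot_{K^e}(s\ol{A})^{\ot p}$ only the length-preserving part of $d$ survives, namely $\id_M\ot d_0$ together with the sole length-preserving component of $d_2$, which is $\bar d_{0,0}=m_{0,0}^M$ acting on the $M$ factor. Thus, up to the usual Koszul signs, $\mathrm{gr}_p\,C_\bullet(A,M)$ is the tensor product of the complex $(M,m_{0,0}^M)$ with the complex $\bigl((s\ol{A})^{\ot p},d_0\bigr)$, the latter being the $p$-fold $K$-tensor power of $(s\ol{A},d_A)$; and under this identification $\mathrm{gr}_p\bigl(C_\bullet(A,f)\bigr)=f_{0,0}\ot_{K^e}\id_{(s\ol{A})^{\ot p}}$. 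Since $k$ is a field and $K=k^t$, the enveloping algebra $K^e=K\ot_k K^{\op}$ is semisimple, so $-\ot_{K^e}-$ is exact and in particular preserves quasi-isomorphisms; as $f_{0,0}$ is a quasi-isomorphism by hypothesis, $\mathrm{gr}_p\bigl(C_\bullet(A,f)\bigr)$ is a quasi-isomorphism for every $p\ge 0$.

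Finally I would conclude by induction and passage to a colimit. For both $M$ and $N$ there are short exact sequences of complexes $0\to F_{p-1}\to F_p\to\mathrm{gr}_p\to 0$; comparing their long exact homology sequences with the five lemma and inducting on $p$ (the base case $F_0=\mathrm{gr}_0$ being the previous paragraph) shows that $F_p\,C_\bullet(A,f)$ is a quasi-isomorphism for all $p$. Since $C_\bullet(A,M)=\varinjlim_p F_p\,C_\bullet(A,M)$ is a filtered colimit along injections and homology of $k$-complexes commutes with filtered colimits, $C_\bullet(A,f)=\varinjlim_p F_p\,C_\bullet(A,f)$ is a quasi-isomorphism, which is the claim. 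One could instead invoke the Eilenberg--Moore comparison theorem for the spectral sequences of these two filtered complexes, which converge because the filtrations are bounded below and exhaustive; the colimit formulation is preferable since it sidesteps convergence issues when $M$ and $N$ are unbounded. The only genuinely non-formal points, and hence the place where care is needed, are (a) the sign bookkeeping certifying that the associated graded really is the tensor-product complex with induced map $f_{0,0}\ot\id$, and (b) the exactness of $-\ot_{K^e}-$, which is exactly where the standing hypothesis $K=k^t$ over the field $k$ is used and without which the statement is false. A variant that trims (a) slightly: pass to the $A_\infty$-$A$-bimodule cone of $f$, observe that $C_\bullet(A,-)$ carries it to the cone of $C_\bullet(A,f)$, and reduce to proving $C_\bullet(A,L)$ acyclic whenever $(L,m_{0,0}^L)$ is acyclic; the same filtration then applies verbatim with ``quasi-isomorphism'' replaced by ``acyclic''.
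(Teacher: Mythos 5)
Your argument is correct and is exactly the bar-length filtration argument that the paper itself uses only by reference, writing ``by the same proof as \cite[Theorem 2.10]{Mes16}'': filter by the number of bar factors, identify the associated graded of the map with $f_{0,0}\ot_{K^e}\id_{(s\ol{A})^{\ot p}}$, use semisimplicity of $K^e$ and the quasi-isomorphism hypothesis on $f_{0,0}$, and conclude by induction on $p$ plus exhaustiveness of the filtration. The only tiny imprecision is that for a general $A_\infty$-algebra $d_{BA}$ has components $\bar d_n$ lowering the bar length by $n-1$ for every $n\ge 1$ (not just an internal part and a single multiplication), but this only reinforces your point that the differential never raises the filtration degree and that $\id_M\ot\bar d_1$ and $\bar d_{0,0}=m_{0,0}^M$ are the only length-preserving contributions.
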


\bigskip

Let $A$ be an augmented $A_\infty$-algebra. Then we have an $A_\infty$-bimodule quasi-isomorphism $\wt{\mu}: \wt{A}\rightarrow A$.
It follows from Lemma \ref{Lemma-A-Inf-Iso-0} that $k$-module complex morphism
$C_\bullet(A,\wt{\mu}): C_\bullet(A,\wt{A})\rightarrow C_\bullet(A)$ is a quasi-isomorphism.
Here, $C_\bullet(A,\wt{A})$ is the complex $\wt{A}\ot_{K^e} BA$.
When $A$ is an augmented dg $K$-algebra, $C_\bullet(A,\wt{A})$ is just $\wt{A}\ot_{A^e}\wt{A}$,
the double replacement of $A\ot^L_{A^e}A$ with two-sided bar resolution.
Moreover, $C_\bullet(A,\wt{\mu})$ has a quasi-inverse $\delta: A\ot_{K^e}BA\rightarrow \wt{A}\ot_{K^e}BA$
which is given by $\delta(a\ot [a_1|\cdots|a_n])=\sum\limits_{i=0}^n a\ot [a_1|\cdots|a_i]\ot 1_A\ot [a_{i+1}|\cdots|a_n]$
and satisfies $C_\bullet(A,\wt{\mu})\circ \delta =\mathrm{id}_{C_{\bullet}(A)}$.

\begin{lemma} \label{Lemma-Free-Bicom-Hom}
Let $A$ be an augmented $A_\infty$-algebra. Then there is a dg $k$-module  isomorphism
$$\Hom_{(BA)^e}(\widehat{A}, \widehat{A^\vee})\cong C_{\bullet}(A,\wt{A})^\vee.$$
\end{lemma}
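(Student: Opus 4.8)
The plan is to realize the isomorphism as a composite of three elementary isomorphisms of graded $k$-modules and then to check that this composite is a chain map. First I would build the underlying graded isomorphism $\Phi$. Since $\widehat{A^\vee}=BA\ot A^\vee\ot BA$ is a cofree graded $BA$-bicomodule, a morphism of graded $BA$-bicomodules $g:\widehat{A}\to\widehat{A^\vee}$ is determined uniquely by the family $\{G_{p,q}:(s\ol{A})^{\ot p}\ot A\ot(s\ol{A})^{\ot q}\to A^\vee\}_{p,q\in\mathbb{N}_0}$ obtained by composing with $\varepsilon_{BA}\ot\id_{A^\vee}\ot\varepsilon_{BA}$; hence, forgetting differentials, $\Hom_{(BA)^e}(\widehat{A},\widehat{A^\vee})\cong\prod_{p,q}\Hom_{K^e}((s\ol{A})^{\ot p}\ot A\ot(s\ol{A})^{\ot q},A^\vee)$. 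Next, writing $A^\vee=\Hom_k(A,k)$, the tensor--hom adjunction over $K^e$ gives $\Hom_{K^e}(X,\Hom_k(A,k))\cong(X\ot_{K^e}A)^\vee$ for every graded $K$-bimodule $X$, with no finiteness hypothesis on $A$ needed. Finally, since $(-)\ot_{K^e}(-)$ is symmetric monoidal on graded $K$-bimodules, one may rotate the circle of tensor factors, picking up the Koszul sign of moving the extra $A$-factor past the rest, to obtain $((s\ol{A})^{\ot p}\ot A\ot(s\ol{A})^{\ot q})\ot_{K^e}A\cong(A\ot(s\ol{A})^{\ot q}\ot A)\ot_{K^e}(s\ol{A})^{\ot p}$; summing over $p,q$ and using $(\bigoplus_iV_i)^\vee=\prod_iV_i^\vee$ turns the product into $(\wt{A}\ot_{K^e}BA)^\vee=C_\bullet(A,\wt{A})^\vee$. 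Concretely, $\Phi(g)$ sends $(a_0\ot[b_1|\cdots|b_q]\ot a_{q+1})\ot[c_1|\cdots|c_p]$ to $\pm\,G_{p,q}([c_1|\cdots|c_p]\ot a_0\ot[b_1|\cdots|b_q])(a_{q+1})$ for the sign produced by the rotation (there is a harmless choice of which $A$-slot of $\wt{A}$ becomes the evaluation argument; I fix the one that makes the next step cleanest), and $\Phi^{-1}$ reads the matrix entries of a functional off against this formula and, by cofreeness again, reassembles them into a graded bicomodule morphism.

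The remaining and principal task is to check that $\Phi$ intertwines the Hom-complex differential $\partial(g)=d_{\widehat{A^\vee}}\circ g-(-1)^{|g|}g\circ d_{\widehat{A}}$ with the $k$-dual of the Hochschild differential $b=d_1+d_2$ on $C_\bullet(A,\wt{A})=\wt{A}\ot_{K^e}BA$, where $d_1=\id_{\wt{A}}\ot d_{BA}$ and $d_2=(\bar{d}^{\wt{A}}\ot\id_{BA})\circ S_{4123}\circ(\id_{\wt{A}}\ot\Delta^{(2)})$ with $\bar{d}^{\wt{A}}=(\varepsilon_{BA}\ot\id_{\wt{A}}\ot\varepsilon_{BA})\circ d_{\widehat{\wt{A}}}$. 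I would do this by evaluating $\Phi(\partial g)$ and $\pm\,\Phi(g)\circ b$ on a general element of $\wt{A}\ot_{K^e}BA$ and matching term by term, using three explicit descriptions already at hand: the bicoderivation $d_{\widehat{A}}$ on $BA\ot A\ot BA$, whose corner is given by the $A_\infty$-bimodule structure maps $m^A_{p,q}=m^A_{p+q+1}$ of $A$ together with the two bar differentials; the bicoderivation $d_{\widehat{\wt{A}}}$ on $BA\ot A\ot BA\ot A\ot BA$ written out in the Preliminaries, which exhibits $\bar{d}^{\wt{A}}$ as the differential of the $BA$ sitting inside $\wt{A}$ plus the left and right $A_\infty$-actions built from the $m^A_n$; and the commutative diagram defining the dual $A_\infty$-bimodule structure $d_{\widehat{A^\vee}}$ of $A^\vee$, which expresses $\bar{d}^{A^\vee}$ through $\mathrm{ev}_A$, $S_{4123}$ and the sign-twisted $-d$ as the $\mathrm{ev}_A$-adjoint of $d_{\widehat{A}}$. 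Under $\Phi$ the outer $BA$ of $C_\bullet(A,\wt{A})$ corresponds to the left $BA$-factor of $\widehat{A}$, the $BA$ inside $\wt{A}$ to the right $BA$-factor, one $A$-slot of $\wt{A}$ to the middle of $\widehat{A}$ and the other to the evaluation argument against $A^\vee$; with this dictionary, $d_1$ and the parts of $d_2$ coming from the internal differential of $\wt{A}$ and from the $A_\infty$-action on the \emph{middle} $A$-slot match $-(-1)^{|g|}g\circ d_{\widehat{A}}$, while the part of $d_2$ coming from the $A_\infty$-action on the $A$-slot that $\Phi$ dualizes into $A^\vee$ matches $d_{\widehat{A^\vee}}\circ g$, the $-d$ in the defining diagram supplying exactly the Koszul sign of the dual differential.

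The hard part is entirely the sign bookkeeping in this last step: the Koszul sign of the cyclic rotation inside $\Phi$, the signs built into $m_n$, $\Delta_n$, the suspension, the graded dual and $S_{4123}$, and the sign in the dual differential must all cancel. I would handle this either by carrying out the comparison directly on a homogeneous element, or---since $\Phi$ is already a graded isomorphism and everything in sight is natural in $A$---by pinning the overall sign down on the low-weight components (small $p,q$, where the computation is short) and then invoking that both differentials square to zero. Everything else is formal once the cofreeness, adjunction and symmetry isomorphisms are in place.
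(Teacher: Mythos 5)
Your proposal follows essentially the same route as the paper: the paper's proof is exactly the chain $\Hom_{(BA)^e}(\widehat{A},\widehat{A^\vee})\cong\Hom_{K^e}(\widehat{A},A^\vee)\cong(A\ot_{K^e}\widehat{A})^\vee\cong((A\ot BA\ot A)\ot_{K^e}BA)^\vee$ (cofreeness, then duality, then a cyclic rearrangement of tensor factors over $K^e$), followed by the one-line remark that the composite is compatible with differentials. Your version is a correct and considerably more explicit expansion of the same argument, including the term-by-term matching of the Hom-complex differential with $b^\vee$ that the paper leaves entirely to the reader.
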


\begin{proof}
It is clear that $\Hom_{(BA)^e}(\widehat{A},\widehat{A^\vee}) \cong \Hom_{K^e}(\widehat{A},A^\vee)
\cong (A\otimes_{K^e}\widehat{A})^\vee \cong ((A\ot BA\ot A)\ot_{K^e}BA)^\vee =C_{\bullet}(A,\wt{A})^\vee$ as graded $k$-modules.
We can check that the composition is not only a graded $k$-module isomorphism but also compatible with differentials, and thus a dg $k$-module  isomorphism.
\end{proof}

\bigskip

\noindent {\bf Connes operators.} Let $A$ be an augmented $A_\infty$-algebra.
The {\it Connes operator} $B$ on the Hochschild chain complex $C_\bullet(A)=A\ot_{K^e} BA$ is defined by
$$B(a_0 \ot [a_1|\cdots|a_n]) := \sum_{i=0}^n (-1)^{\eta_i(\eta_n-\eta_i)}\ 1 \ot [a_{i+1}|\cdots|a_n|a_0|\cdots|a_i],$$
where $\eta_i := \sum\limits_{j=0}^i(|a_j|+1)$ (see \cite{GetJon90}). It satisfies $B^2 =0$ and $Bb+bB=0$.
Therefore, $C_\bullet(A)$ is a mixed complex.
Let $\Lambda$ be the dg algebra $k[\epsilon]/(\epsilon^2)$ with $|\epsilon|=1$ and differential zero.
Then $C_{\bullet}(A)$ becomes a dg $\Lambda$-module with $\epsilon$ acting on $C_{\bullet}(A)$ by the Connes operator $B$.

The Connes operator $B:A\ot_{K^e} BA \rightarrow A\ot_{K^e} BA$ can be lifted to an operator
$$\wt{B}: \wt{A}\ot_{K^e} BA \rightarrow A\ot_{K^e} BA$$
mapping $a\ot [a_1|\cdots|a_n]\ot a'\ot[a'_1|\cdots|a'_m]$ to
$(-1)^{\varepsilon_n+|a|}a\ot [a_1|\cdots|a_n|a'|a'_1|\cdots|a'_m]\linebreak +(-1)^{(\varepsilon_{m}'+|a'|)(\varepsilon_n+|a|+1)}a'\ot[a'_1|\cdots|a'_m|a|a_1|\cdots|a_n]$,
where $\varepsilon_m'=\sum\limits_{i=1}^m(|a'_i|+1)$.
Moreover, the following diagram is commutative:
$$\xymatrix{
A\ot_{K^e} BA \ar[r]^-{\delta} \ar[dr]_-{B} & \wt{A}\ot_{K^e} BA \ar@{.>}[d]^-{\wt{B}} \\
& A\ot_{K^e} BA.
}$$

\subsection{Cyclic homology}

Cyclic homology is indispensable for studying exact Hochschild extensions.

\bigskip

\noindent{\bf Cyclic homology.} Let $A$ be an augmented dg $K$-algebra and $C_{\bullet}(A)$ the Hochschild chain complex of $A$
which is a mixed complex with Hochschild boundary operator $b$ of degree $-1$ and Connes operator $B$ of degree $1$.
Let $u$ be an indeterminant of degree $-2$, $k[[u]]$ the formal series algebra in $u$
which is a pseudo-compact graded algebra \cite{Bru66},
and $k((u))$ the fraction field of $k[[u]]$ or equivalently the Laurent series algebra $k[u,u^{-1}]$ in $u$.
The {\it negative cyclic complex} of $A$ is $CN_\bullet(A) := C_\bullet(A)[[u]]$,
the {\it periodic cyclic complex} of $A$ is $CP_\bullet(A) := C_\bullet(A)[[u]] \hat{\ot}_{k[[u]]} k((u))$,
and the {\it cyclic complex} of $A$ is $CC_\bullet(A) := C_\bullet(A)[[u]] \hat{\ot}_{k[[u]]} (k((u))/uk[[u]])$,
all with differential $b+uB$, where $``\hat{\ot}" $ stands for complete tensor product.
The homologies of $CN_\bullet(A), CP_\bullet(A)$ and $CC_\bullet(A)$
are {\it negative cyclic homology} $HN_\bullet(A)$, {\it periodic cyclic homology} $HP_\bullet(A)$ and {\it cyclic homology} $HC_\bullet(A)$,
of $A$ respectively.

Acting the functor $C_\bullet(A)[[u]] \hat{\ot}_{k[[u]]} -$ on the commutative diagram
$$\xymatrix{ 0 \ar[r] & k[[u]] \ar[r] \ar[d] & k((u)) \ar[r] \ar[d] & k((u))/k[[u]] \ar[r] \ar@{=}[d] & 0 \\
0 \ar[r] & k \ar[r] & k((u))/uk[[u]] \ar[r] & k((u))/k[[u]] \ar[r] & 0}$$
of $k[[u]]$-modules, where $k\cong k[[u]]/uk[[u]]$ and two rows are short exact sequences,
we get the following commutative diagram
$$\xymatrix{ 0 \ar[r] & CC_\bullet^-(A) \ar[r] \ar[d] & CP_\bullet(A) \ar[r] \ar[d] & CC_\bullet(A)[2] \ar[r] \ar@{=}[d] & 0 \\
0 \ar[r] & C_\bullet(A) \ar[r] & CC_\bullet(A) \ar[r] & CC_\bullet(A)[2] \ar[r] & 0.}$$
with exact rows. By taking homology, we obtain the following result:

\begin{lemma} \label{Lemma-Hoch-Cyc-LongExSeq}
Let $A$ be an augmented dg $K$-algebra. Then the following diagram
$$\xymatrixcolsep{1.8pc}\xymatrixrowsep{2pc}\xymatrix{
\cdots \ar[r] & HC_{n-1}(A) \ar[r]^-{B'_{n+1}} \ar@{=}[d] & HN_n(A) \ar[r]^-{I_n'} \ar[d]^-{P_n} &
HP_n(A) \ar[r]^-{S_n'} \ar[d] & HC_{n-2}(A) \ar[r] \ar@{=}[d] & \cdots \\
\cdots \ar[r] & HC_{n-1}(A) \ar[r]^-{B_{n+1}}  & HH_n(A) \ar[r]^-{I_n} & HC_n(A) \ar[r]^-{S_n} & HC_{n-2}(A) \ar[r] & \cdots
}$$
is commutative and with exact rows.
\end{lemma}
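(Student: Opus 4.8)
This statement is pure homological algebra applied to the commutative diagram of complexes with exact rows displayed immediately above, so the plan is to pass to homology and invoke the naturality of the long exact sequence.

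First I would observe that the displayed diagram is in fact a morphism of short exact sequences of complexes: its two rows are short exact by construction, its vertical arrows are the chain maps obtained by applying the functor $C_\bullet(A)[[u]]\hat{\ot}_{k[[u]]}-$ to the $k[[u]]$-module morphisms $k[[u]]\to k$, $k((u))\to k((u))/uk[[u]]$ and $\id$, and the two squares commute because they are the image of a commutative diagram under that functor. In particular the left vertical arrow $CN_\bullet(A)=C_\bullet(A)[[u]]\to C_\bullet(A)$ is the canonical ``$u\mapsto 0$'' projection, and the right vertical arrow is the identity of $CC_\bullet(A)[2]$.

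Next I would apply the long exact homology sequence to each of the two rows. For the upper row this produces an exact sequence whose terms, after the identifications $H_n(CN_\bullet(A))=HN_n(A)$, $H_n(CP_\bullet(A))=HP_n(A)$ and $H_n(CC_\bullet(A)[2])=H_{n-2}(CC_\bullet(A))=HC_{n-2}(A)$, are exactly the top row of the asserted diagram, with $I'_n$ and $S'_n$ the maps induced by the two inclusion/projection chain maps of the short exact sequence and $B'_{n+1}$ the connecting homomorphism. The lower row is treated in the same way, using $H_n(C_\bullet(A))=HH_n(A)$ and $H_n(CC_\bullet(A))=HC_n(A)$, and yields the bottom row with induced maps $I_n,S_n$ and connecting homomorphism $B_{n+1}$. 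This gives exactness of both rows.

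Finally, commutativity of the diagram is precisely the naturality of the homology long exact sequence with respect to the morphism of short exact sequences just set up: the squares that do not involve a connecting map commute because they are obtained by applying $H_\bullet$ to the commuting squares of chain maps, and the squares built from $B'_{n+1}$, $B_{n+1}$ and $P_n$ commute by naturality of the connecting homomorphism (the right-hand vertical being the identity of $CC_\bullet(A)[2]$ and the left-hand vertical being the projection inducing $P_n$). I expect the only point needing genuine care --- the sole real bookkeeping here --- to be tracking the homological shift $[2]$, together with the sign it introduces, so that the connecting maps are seen to land in the degrees stated (for instance $B'_{n+1}\colon HC_{n-1}(A)\to HN_n(A)$ and not in a neighbouring degree); this amounts to nothing more than the degree convention fixed in the introduction, that an element of lower degree $i$ has upper degree $-i$.
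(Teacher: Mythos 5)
Your proposal is correct and is essentially identical to the paper's own argument: the paper obtains the same morphism of short exact sequences of complexes by applying $C_\bullet(A)[[u]]\hat{\ot}_{k[[u]]}-$ to the commutative diagram of $k[[u]]$-modules displayed just before the lemma, and then concludes ``by taking homology,'' i.e.\ by the naturality of the long exact homology sequence exactly as you describe. Your remark about tracking the shift $[2]$ so that the connecting maps land in the stated degrees is the only bookkeeping point, and you have handled it correctly under the paper's degree conventions.
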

%\begin{remark}[\cite{Her18}] The cyclic homology and negative cyclic homology of dg $K$-algebra $A$ can also be interpreted the homologies of $C_{\bullet}(A)\otimes^{L}_{\Lambda}k$  and  $\RHom_{\Lambda}(k, C_{\bullet}(A))$ respectively.
%\end{remark}
\bigskip

\noindent{\bf (Almost) Exact Hochschild homology classes.} Some special Hochschild homology classes played quite important roles in Calabi-Yau algebra theory \cite{Van15,Her18,dTdVVdB18}.

\begin{definition}{\rm
Let $A$ be an augmented dg $K$-algebra.
A Hochschild homology class $[\az] \in HH_n(A)$ is {\it exact} if  $[\az] \in \Im B_{n+1}= \Ker I_n$, and
{\it almost exact} if $[\az] \in \Im P_n$.
}\end{definition}

\bigskip

\noindent{\bf Exact Hochschild cohomology classes.} Some special Hochschild cohomology classes will play an important role in symmetric Hochschild extension theory.
Let $A$ be an augmented dg $K$-algebra. Acting the exact functor $(-)^\vee=\Hom_k(-,k)$ on the Connes' long exact sequence
$$\cdots \rightarrow HH_{n}(A) \xrightarrow{I_n} HC_{n}(A) \xrightarrow{S_n} HC_{n-2}(A)\xrightarrow{B_n}HH_{n-1}(A) \rightarrow \cdots$$
we get the following long exact sequence:
$$\xymatrix{ \cdots & HC_{n-1}(A)^\vee \ar[l]  & HH_n(A)^\vee \ar[l]_-{B_{n+1}^\vee} & HC_n(A)^\vee \ar[l]_-{I_n^\vee} & \cdots \ar[l] }$$
Due to isomorphisms $H^n(A,A^\vee) = H^n\RHom_{A^e}(A,A^\vee) \cong H^n((A\ot^L_{A^e}A)^\vee) \cong (H_n(A\ot^L_{A^e}A))^\vee = HH_n(A)^\vee$,
we can identify $H^n(A,A^\vee)$ with $HH_n(A)^\vee$.

\bigskip

\begin{definition}{\rm
Let $A$ be an augmented dg $K$-algebra.
A Hochschild cohomology class $[\az] \in H^n(A,A^\vee)$ is {\it exact} if $[\az] \in \Im I_n^\vee=\Ker B_{n+1}^\vee$.
}\end{definition}

\subsection{Hochschild extensions}

In the classical case, for an ordinary algebra $A$, an $A$-bimodule $M$ and a Hochschild 2-cocycle $\az:A\ot A\to M$,
the Hochschild extension $\T (A,M,\az)$ of $A$ by $M$ and $\az$ is still an ordinary algebra (see for example \cite[1.5.3]{Lod98}).
In the dg case, for a dg $K$-algebra $A$ and a dg $A$-bimodule $M$,
the {\it trivial extension} $\T (A,M)$ of $A$ by $M$, i.e., $A\oplus M$
with the product given by $(a,m)\cdot(a',m')=(aa',am'+ma')$ for all $a,a'\in A,\ m,m'\in M$, is still a dg $K$-algebra.
However, for a Hochschild 2-cocycle $\az$ of $A$ with coefficients in $M$,
the Hochschild extension $\T (A,M,\az)$ of $A$ by $M$ and $\az$ is an $A_\infty$-algebra in general.

\bigskip

\noindent{\bf Hochschild extensions.}
Let $A$ be an augmented dg $K$-algebra, $M$ a dg $A$-bimodule with degree $-2$ component zero, i.e., $M_{-2}=0$,
and $\az\in C^\bullet(A,M)_{-2}=\Hom_{K^e}(BA,M)_{-2}$ a Hochschild cochain of degree $-2$.
To define an augmented $A_\infty$-algebra structure on $A\oplus M$ is equivalent to
define a graded $K$-coderivation $d$ on graded $K$-coalgebra $T(s\bar{A}\oplus sM)$ of degree $-1$ such that $d\circ\eta_{T(s\bar{A}\oplus sM)}=0$ and $d^2=0$,
and further equivalent to define a graded $K$-bimodule morphism $\bar{d}:T(s\ol{A}\oplus sM) \to s\ol{A}\oplus sM$ of degree $-1$.
Note that graded $K$-bimodule $T(s\ol{A}\oplus sM)=BA\oplus (BA\ot sM\ot BA)\oplus (BA\ot sM\ot BA\ot sM\ot BA)\oplus\cdots$.
We define $\bar{d}$ to be the composition
$$T(s\ol{A}\oplus sM)\twoheadrightarrow BA\oplus BA\ot sM\ot BA
\xrightarrow{\begin{bmatrix} p_{s\ol{A}}\circ d_{BA}&0\\-s\circ\alpha &p_{sM}\circ d_{BA\ot sM\ot BA}\end{bmatrix}} s\ol{A} \oplus sM$$
where $T(s\ol{A}\oplus sM)\twoheadrightarrow BA\oplus BA\ot sM\ot BA$,
$p_{s\ol{A}}:BA\to s\ol{A}$ and $p_{sM}=\varepsilon_{BA}\ot\id_{sM}\ot\varepsilon_{BA}:BA\ot sM\ot BA\twoheadrightarrow sM$ are natural projections.
Then $d$ is a differential on $T(s\ol{A}\oplus sM)$, i.e., a coderivation on $T(s\ol{A}\oplus sM)$ of degree $-1$ such that $d^2=0$,
if and only if $\az$ is a Hochschild 2-cocycle of $A$ with coefficients in $M$.

Now assume that $\az$ is a Hochschild 2-cocycle of $A$ with coefficients in $M$.
Then $d$ is a differential on $T(s\ol{A}\oplus sM)$.
Thus $T(s\ol{A}\oplus sM)$ is a coaugmented dg $K$-coalgebra, and $A\oplus M$ is an augmented $A_\infty$-algebra
with unit $\eta: K\xrightarrow{\eta_A}A\hookrightarrow A\oplus M$
and augmentation $\varepsilon: A\oplus M\twoheadrightarrow A\xrightarrow{\varepsilon_A}K$.
Here, $A\hookrightarrow A\oplus M$ and $A\oplus M\twoheadrightarrow A$ are the natural inclusion and projection.
We denote this $A_\infty$-algebra $A\oplus M$ by $\T(A,M,\alpha)$,
and call it the {\it Hochschild extension} of $A$ by $M$ and $\az$.
Obviously, the natural projection $\T(A,M,\alpha)\twoheadrightarrow A$ is a strict augmented $A_\infty$-algebra morphism.

Next, we show that two equivalent Hochschild 2-cocycles define isomorphic Hochschild extensions.
Let $\alpha, \alpha'\in C^\bullet(A,M)_{-2}$ be two equivalent Hochschild 2-cocycles,
i.e., there exists $\bz\in C^\bullet(A,M)_{-1}$ such that $d(\bz)=\alpha-\alpha^\prime$.
Then $\bz$ defines an augmented $A_\infty$-algebra isomorphism from $\T(A,M,\alpha)$ to $\T(A,M,\alpha^\prime)$,
or equivalently, a coaugmented dg $K$-coalgebra isomorphism
$$f_\bz : B\T(A,M,\alpha)\to B\T(A,M,\alpha^\prime).$$
Indeed, let $f_\bz: B\T(A,M,\alpha)\to B\T(A,M,\alpha^\prime)$ be the coaugmented dg $K$-coalgebra morphism uniquely determined by the dg $K$-bimodule morphism
$$B\T(A,M,\alpha)=T(s\ol{A}\oplus sM)\twoheadrightarrow BA\oplus BA\ot sM\ot BA
\xrightarrow{\begin{bmatrix}p_{s\ol{A}} & 0\\ s\circ\bz &p_{sM}\end{bmatrix}} s\ol{A}\oplus sM.$$
and $g_\bz : B\T(A,M,\alpha^\prime) \to B\T(A,M,\alpha)$ the coaugmented dg $K$-coalgebra morphism
uniquely determined by the dg $K$-bimodule morphism
$$B\T(A,M, \alpha')=T(s\ol{A}\oplus sM)\twoheadrightarrow BA\oplus BA\ot sM\ot BA
\xrightarrow{\begin{bmatrix}p_{s\ol{A}} & 0\\ -s\circ\bz & p_{sM}\end{bmatrix}}s\ol{A} \oplus sM.$$
The condition $d(\bz)=\alpha-\alpha^\prime$ ensures that both $f_\bz$ and $g_\bz$ are coaugmented dg $K$-coalgebra morphisms.
It is not difficult to check that $f_\bz$ and $g_\bz$ are inverse to each other.
So far we have proved the following theorem:

\begin{theorem} \label{Theorem-Hoch-Ext}
Let $A$ be an augmented dg $K$-algebra, $M$ a dg $A$-bimodule with degree $-2$ component zero, i.e., $M_{-2}=0$,
and $\az,\az'\in C^\bullet(A,M)_{-2}$ two Hochschild cochains of degree $-2$.
Then $\T(A,M,\az)$ is an augmented $A_\infty$-algebra if and only if $\az$ is a Hochschild 2-cocycle.
Moreover, $\T(A,M,\az) \cong \T(A,M,\az')$ if $[\az]=[\az']\in H^2(A,M)$.
\end{theorem}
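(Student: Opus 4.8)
The plan is to use the universal property of the cofree coaugmented tensor coalgebra $T(s\ol A\oplus sM)$: by definition, an augmented $A_\infty$-algebra structure on $A\oplus M$ with the stated unit and augmentation is precisely a degree $-1$ coderivation $d$ on $T(s\ol A\oplus sM)$ with $d\circ\eta_{T(s\ol A\oplus sM)}=0$ and $d^2=0$, and such a $d$ is uniquely determined by its corestriction $\bar d=p_{s\ol A\oplus sM}\circ d$. So everything reduces to analysing the corestriction displayed just above the statement. First I would decompose $\bar d=\bar d^{(0)}+\bar d^{(\az)}$, where $\bar d^{(0)}$ is the block given by $p_{s\ol A}\circ d_{BA}$ on $BA$ and $p_{sM}\circ d_{BA\ot sM\ot BA}$ on $BA\ot sM\ot BA$ --- this is exactly the corestriction of the bar differential of the \emph{trivial} extension $\T(A,M)$, under the identification $T(s\ol A\oplus sM)=BA\oplus(BA\ot sM\ot BA)\oplus\cdots$ --- while $\bar d^{(\az)}$ is $-s\circ\az$ on the summand $BA$ of words with no $M$-letter and $0$ elsewhere. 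Since $M_{-2}=0$, the weight-zero component $\az|_K\colon K\to M$ vanishes, so $\bar d|_K=0$ and hence $d\circ\eta=0$ holds automatically; the whole content is the identity $d^2=0$.

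Let $d=d^{(0)}+d^{(\az)}$ be the induced decomposition into coderivations. As $d$ is odd, $d^2$ is again a coderivation --- as are $(d^{(0)})^2$, $(d^{(\az)})^2$ and the graded commutator $[d^{(0)},d^{(\az)}]=d^{(0)}d^{(\az)}+d^{(\az)}d^{(0)}$ --- so $d^2=(d^{(0)})^2+[d^{(0)},d^{(\az)}]+(d^{(\az)})^2$ vanishes iff its corestriction does. Now $(d^{(0)})^2=0$ because $\T(A,M)$ is an honest dg $K$-algebra, so its bar construction is a dg coalgebra. And $(d^{(\az)})^2=0$ because $d^{(\az)}$ replaces an $A$-only sub-word by a single $M$-letter, hence strictly increases the number of $sM$-tensor factors by one; thus $(d^{(\az)})^2$ has image in words with at least two $M$-letters and therefore vanishing corestriction, so, being a coderivation, it is zero. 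The only surviving term is the corestriction of $[d^{(0)},d^{(\az)}]$; unwinding the coderivation formula, the $d_M$- and left/right-action parts of $d_{BA\ot sM\ot BA}$, together with the $d_A$- and adjacent-multiplication parts of $d_{BA}$, reassemble exactly into $\pm s\circ\dz(\az)$, where $\dz=\dz_0+\dz_1$ is the Hochschild cochain differential. Hence $d^2=0$ iff $\dz(\az)=0$, i.e.\ $\az$ is a Hochschild $2$-cocycle; and then $\varepsilon\circ\eta=\varepsilon_A\circ\eta_A=\id_K$ makes $A\oplus M$ the augmented $A_\infty$-algebra $\T(A,M,\az)$, with the projection onto $A$ a strict morphism.

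For the second assertion, assume $[\az]=[\az']$ and choose $\bz\in C^\bullet(A,M)_{-1}$ with $\dz(\bz)=\az-\az'$. I would take $f_\bz\colon B\T(A,M,\az)\to B\T(A,M,\az')$ to be the coaugmented graded coalgebra morphism whose corestriction is $p_{s\ol A}$ into $s\ol A$ and $s\bz$ into $sM$ on the summand $BA$, and $p_{sM}$ on the summand $BA\ot sM\ot BA$ (the morphism $\id+s\bz$), and $g_\bz$ the analogous morphism built from $-s\bz$. Two verifications are needed. First, $f_\bz$ is a chain map: being a morphism between cofree coalgebras it suffices to compare the corestrictions of $f_\bz\circ d_{\az}$ and $d_{\az'}\circ f_\bz$, and their equality collapses exactly to $\dz(\bz)=\az-\az'$ (the $\bz$-free parts match because $d^{(0)}$ is common to both extensions). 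Second, $f_\bz$ and $g_\bz$ are mutually inverse: the two composites are coalgebra endomorphisms whose corestrictions are the identity on the two relevant summands --- the $+s\bz$ and $-s\bz$ contributions cancel, and there is no higher correction since $\bz$ outputs an $M$-letter on which $\bz$ cannot act again --- so by cofreeness they equal the identity. Therefore $f_\bz$ is an isomorphism of coaugmented dg coalgebras, that is, an isomorphism $\T(A,M,\az)\cong\T(A,M,\az')$ of augmented $A_\infty$-algebras.

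The skeleton above is formal; the one place where real care is required --- and the main obstacle --- is Step~2, namely checking that the corestriction of $[d^{(0)},d^{(\az)}]$ equals $s\circ\dz(\az)$ on the nose. This forces one to reconcile the Koszul signs generated by the coderivation and suspension conventions with the explicit signs $\varepsilon_i=\sum_{j\le i}(|a_j|+1)$ occurring in $\dz_0$ and $\dz_1$; and likewise, in the second part, to match the signs in the chain-map equation for $f_\bz$ with those in $\dz(\bz)=\az-\az'$. Once the sign conventions are pinned down consistently, the remainder is routine bookkeeping on the cofree coalgebra.
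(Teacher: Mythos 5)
Your proposal is correct and follows essentially the same route as the paper: the same reduction to corestrictions on the cofree coalgebra $T(s\ol{A}\oplus sM)$ with the same defining matrix for $\bar d$, and the same morphisms $f_\bz$, $g_\bz$ for the second assertion. Your decomposition $d=d^{(0)}+d^{(\az)}$ and the three-term expansion of $d^2$ merely make explicit the computation the paper asserts without detail, so this is a faithful (indeed slightly more detailed) version of the paper's argument.
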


\begin{remark}{\rm
In Theorem \ref{Theorem-Hoch-Ext}, the assumption $M_{-2} =0$ is necessary.
Otherwise, it is possible that $d_{B\T(A,M,\az)}(1_{B\T(A,M,\az)})
= -s\az(1_{BA})\in(sM)_{-1}=M_{-2}$ is nonzero, which leads to a curved $A_\infty$-algebra $\T(A,M,\az)$.
}\end{remark}

\bigskip

\noindent{\bf Exact Hochschild extensions.} A special kind of Hochschild extensions of augmented dg $K$-algebras,
called {\it symmetric Hochschild extensions} \cite{OhnTakYam99}, are symmetric $A_\infty$-algebras.

\begin{definition}{\rm
An augmented $A_\infty$-algebra $A$ is {\it $n$-symmetric} if $A \cong A^\vee[-n]$ as $A_\infty$-$A$-bimodules.
}\end{definition}

\begin{remark}{\rm
There are other three closely related concepts: $\infty$-Poincar\'{e} duality structure \cite{Tra08},
$A_\infty$-cyclic structure \cite{Van15} and pre-Calabi-Yau algebra \cite{TraZei07,Sei17,KonVla13}.
}\end{remark}

The following result implies that {\it exact Hochschild extensions},
namely, the Hochschild extensions defined by Hochschild 2-cocycles in exact Hochschild cohomology classes,
are always symmetric Hochschild extensions.

\begin{theorem} \label{Theorem-HochExt-Sym}
Let $A$ be a finite dimensional complete typical dg $K$-algebra, $n\in \mathbb{Z}$ satisfying $A_{2-n}=0$, and $[\az]\in H^2(A,A^\vee[-n])$ exact.
Then $\T _n(A,\az):=\T (A,A^\vee[-n],\az)$ is an $n$-symmetric $A_\infty$-algebra.
\end{theorem}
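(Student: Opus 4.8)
The plan is to exhibit an $A_\infty$-$\T$-bimodule morphism $\phi\colon\T\to\T^\vee[-n]$, where $\T:=\T_n(A,\az)=A\oplus A^\vee[-n]$, whose $(0,0)$-component $\phi_{0,0}$ is an isomorphism of dg $K$-bimodules; then by Lemma \ref{Lemma-A-bimod-iso} the induced map $\widehat\phi\colon\widehat\T\to\widehat{\T^\vee[-n]}$ is an isomorphism of dg $B\T$-bicomodules, which says exactly that $\T\cong\T^\vee[-n]$ as $A_\infty$-$\T$-bimodules, i.e. $\T$ is $n$-symmetric. Since $A$ is finite dimensional, $A^{\vee\vee}\cong A$, so the underlying graded $K$-bimodule of $\T^\vee[-n]$ is $A^\vee[-n]\oplus A$ and there is an evident bijective candidate for $\phi_{0,0}$, the ``swap'' $(a,f)\mapsto(\pm f,\pm a)$. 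The hypothesis $A_{2-n}=0$ yields $(A^\vee[-n])_{-2}=(A_{2-n})^\vee=0$, which is the condition in Theorem \ref{Theorem-Hoch-Ext} ensuring $\T$ is a genuine (non-curved) $A_\infty$-algebra; it also makes $\phi_{0,0}$ commute with the internal differentials $m_{0,0}$, because the only off-diagonal part of $m_1^\T$ comes from the weight-one component of $\az$ and is matched by its graded dual inside $m_{0,0}^{\T^\vee[-n]}$.

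For the higher components $\phi_{p,q}$ with $p+q>0$ I would use the $\T$-instance of Lemma \ref{Lemma-Free-Bicom-Hom}, namely $\Hom_{(B\T)^e}(\widehat\T,\widehat{\T^\vee[-n]})\cong C_\bullet(\T,\wt\T)^\vee[-n]$, under which an $A_\infty$-bimodule morphism $\T\to\T^\vee[-n]$ is a degree-zero cocycle, together with the quasi-isomorphism $C_\bullet(\T,\wt\mu)\colon C_\bullet(\T,\wt\T)\to C_\bullet(\T)$ and its section $\delta$ with $C_\bullet(\T,\wt\mu)\circ\delta=\id$. So it is enough to build a $b^\vee$-closed functional of the correct degree on $C_\bullet(\T)$ and transport it. Filtering $B\T=T(s\ol A\oplus sA^\vee[-n])$ by the number of $sA^\vee[-n]$-tensor factors induces a decomposition of $C_\bullet(\T)$ whose lowest piece is $C_\bullet(A)$; the desired functional should restrict there to a cocycle representing $[\az]\in HH_{2-n}(A)^\vee$, and a bookkeeping computation should identify the failure of $b^\vee$-closedness of the choice that vanishes on all higher pieces with the precomposition of that representative with Connes' operator $B$, modulo terms handled by the lifted Connes operator through the identity $\wt B\circ\delta=B$.

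The exactness hypothesis removes exactly this obstruction. By definition, $[\az]$ exact means $[\az]=I_{2-n}^\vee(\lambda)$ for some $\lambda\in HC_{2-n}(A)^\vee$; since $\Im I_{2-n}^\vee=\Ker B_{3-n}^\vee$ (Lemma \ref{Lemma-Hoch-Cyc-LongExSeq}), this is equivalent to saying the functional on $C_\bullet(A)$ representing $[\az]$ extends to a cocycle on the cyclic complex, i.e. there exist functionals $\lambda_0,\lambda_1,\lambda_2,\dots$ on $C_\bullet(A)$ with $\lambda_0$ representing $[\az]$ and $\lambda_i\circ b+\lambda_{i-1}\circ B=0$ for all $i\ge1$. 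Feeding $\{\lambda_i\}$ through $\delta$ and $\wt B$ provides the higher pieces of the functional, and the relations $\lambda_i b+\lambda_{i-1}B=0$ combined with $\wt B\circ\delta=B$ are precisely what makes the assembled functional $b^\vee$-closed, so that $\phi$ is a bona fide $A_\infty$-bimodule morphism; replacing $\az$ within its class (legitimate by Theorem \ref{Theorem-Hoch-Ext}) one may also arrange its low-weight data to match $\phi_{0,0}$. As the swap is bijective, Lemma \ref{Lemma-A-bimod-iso} finishes the argument.

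The step I expect to be the main obstacle is the middle one: pinning down the precise dictionary between the cyclic lift $\{\lambda_i\}$ and the family $\{\phi_{p,q}\}$ and verifying $b^\vee$-closedness. This is a sign-heavy computation requiring careful tracking of the $\az$-twist in $d_{B\T}$, of the suspension and graded-dual conventions defining $\T^\vee[-n]$ as an $A_\infty$-$\T$-bimodule, and of the compatibility $\wt B\circ\delta=B$. It may well be cleaner to carry the whole construction in the language of non-degenerate cyclic $A_\infty$-pairings of degree $-n$ on $\T$ (equivalently $A_\infty$-cyclic structures, cf.\ \cite{Van15}), with input a cyclic cocycle of $A$ pushed forward along $A\hookrightarrow\T$; this is the $A_\infty$-analogue of the classical fact, underlying \cite[Theorem 2.2]{OhnTakYam99}, that cyclically symmetric Hochschild $2$-cocycles produce symmetric Hochschild extensions.
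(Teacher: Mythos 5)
Your proposal follows essentially the same route as the paper's proof: reduce via Lemma \ref{Lemma-A-bimod-iso} to producing an $A_\infty$-bimodule morphism whose $(0,0)$-component is the (perturbed) swap, encode such morphisms as cocycles in $s^{-n}(\T\ot_{K^e}(B\T\ot\T\ot B\T))^\vee$ via Lemma \ref{Lemma-Free-Bicom-Hom}, identify the obstruction to closedness of the naive swap with $B^\vee$ applied to a representative of $[\az]$, and kill it using the cyclic lift $\eta=\eta^0+\eta^1u+\cdots$ transported through $\delta$ and the lifted Connes operator $\wt B$. The paper carries out exactly this plan (using only $\eta^0$ and $\eta^1$ from the lift), so your outline is the intended argument, with the sign-heavy verification you flag being the content of the paper's explicit computation.
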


\begin{proof}
For simplicity, we denote the exact Hochschild extension $\T _n(A,\alpha)$ by $\T $, which is an augmented $A_\infty$-algebra by Theorem \ref{Theorem-Hoch-Ext}.
We need to define an $A_\infty$-$\T$-bimodule isomorphism between $\T$ and $\T^\vee[-n]$,
or equivalently, a dg $B\T$-bicomodule isomorphism between $B\T\ot\T\ot B\T$ and $B\T\ot\T ^\vee[-n]\ot B\T$.
By Lemma \ref{Lemma-Free-Bicom-Hom}, we have dg $k$-module isomorphisms
$$\begin{array}{ll}
& \Hom_{(B\T)^e}(B\T\ot\T\ot B\T, B\T\ot\T^\vee[-n]\ot B\T) \vspace{2mm}\\
\cong & s^{-n}C_\bullet(\T,\T\ot B\T\ot\T)^\vee \vspace{2mm}\\
= & s^{-n}((\T\ot B\T\ot\T)\ot_{K^e} B\T)^\vee \vspace{2mm}\\
\cong & s^{-n}(\T\ot_{K^e} (B\T\ot\T\ot B\T))^\vee.
\end{array}$$
We will define a 0-cycle in $s^{-n}(\T\ot_{K^e} (B\T\ot\T\ot B\T))^\vee$
which corresponds to a dg $B\T$-bicomodule isomorphism from $B\T\ot\T\ot B\T$ to $B\T\ot\T^\vee[-n]\ot B\T$.

According to the graded $k$-module isomorphism
$$\Hom_{(BA)^e}(BA\ot A^\vee\ot BA, BA\ot A^\vee\ot BA)\cong (A\ot_{K^e}( BA\ot A^\vee\ot BA))^\vee,$$
we can endow the graded $k$-module $(A\ot_{K^e}(BA\ot A^\vee\ot BA))^\vee$
with the differential induced from that of the dg $k$-module $\Hom_{(BA)^e}(BA\ot A^\vee\ot BA, BA\ot A^\vee\ot BA)$.
The identity morphism on $BA\ot A^\vee\ot BA$ corresponds to the 0-cycle
$$\begin{array}{ll}
\theta& = \sum\limits_{e_i\in I}(e_i\ot 1\ot e_i^\vee\ot 1)^\vee + \sum\limits_{a\in S}(-1)^{|a|}(a\ot 1\ot a^\vee\ot 1)^\vee \vspace{2mm}\\
& \in (A\ot_{K^e}(BA\ot A^\vee\ot BA))^\vee
\end{array}$$
where $I=\{e_1,\cdots,e_t\}$ is a complete set of primitive idempotents of $K=k^t$,
$S=\bigcup\limits_{1\le i,j\le t} S_{ij}$, $S_{ij}$ is a $k$-basis of $e_i\ol{A}e_j$ for all $1\le i,j\le t$,
and $I^\vee=\{e_1^\vee,\cdots,e_t^\vee\}$ and $S^\vee=\{a^\vee\ |\ a\in S\}$ are the dual bases of $I$ and $S$ respectively.
Then we have an $(n-1)$-cycle
$$\begin{array}{ll}
s^{n-1}\theta & =\sum\limits_{e_i\in I}(e_i\ot 1\ot s^{1-n}e_i^\vee\ot 1)^\vee + \sum\limits_{a\in S}(-1)^{n|a|}(a\ot 1\ot s^{1-n}a^\vee\ot 1)^\vee
\vspace{2mm}\\ & \in (A\ot_{K^e}(BA\ot s^{1-n}A^\vee\ot BA))^\vee.
\end{array}$$
The canonical projections $\T=A\oplus s^{-n}A^\vee \twoheadrightarrow A$ and $B\T = T(s\ol{A}\oplus s^{1-n}A^\vee) \twoheadrightarrow BA\ot s^{1-n}A^\vee\ot BA$
define a projection $p: \T \ot_{K^e} B\T \twoheadrightarrow A\ot_{K^e} (BA\ot s^{1-n}A^\vee\ot BA)$, which is a surjective graded $k$-module morphism.
Its dual
$$p^\vee:(A\ot_{K^e}(BA\ot s^{1-n}A^\vee\ot BA))^\vee\rightarrow (\T\ot_{K^e} B\T)^\vee,$$
is an injective graded $k$-module morphism. Thus
$$\begin{array}{ll}
p^\vee(s^{n-1}\theta) & =\sum\limits_{e_i\in I}(e_i\ot s^{1-n}e_i^\vee)^\vee + \sum\limits_{a\in S}(-1)^{n|a|}(a\ot s^{1-n}a^\vee)^\vee \vspace{2mm}\\
& \in (\T\ot_{K^e} B\T)^\vee
\end{array}$$
is of degree $n-1$. Let
$$\tilde{\theta}:=(-1)^{n-1}p^\vee(s^{n-1}\theta) \quad \in (\T\ot_{K^e} B\T)^\vee$$
which is of degree $n-1$ as well.

The dual
$\wt{B}_\T^\vee: (\T\ot_{K^e} B\T)^\vee\to (\T\ot_{K^e}(B\T\ot\T\ot B\T))^\vee$
of the lift $\wt{B}_\T : \T\ot_{K^e}(B\T\ot\T\ot B\T) \to \T\ot_{K^e} B\T$ of Connes operator $B_\T : \T\ot_{K^e} B\T \to \T\ot_{K^e} B\T$ maps $\tilde{\theta}$ to
$$\begin{array}{ll}
\wt{B}^\vee_\T(\tilde{\theta})& = \sum\limits_{e_i\in I}((e_i\ot 1\ot s^{-n}e_i^\vee\ot1)^\vee+(s^{-n}e_i^\vee\ot 1\ot e_i\ot 1)^\vee) \vspace{2mm}\\
& \quad \quad +\sum\limits_{a\in S}((-1)^{n|a|+|a|}( a\ot 1\ot s^{-n}a^\vee\ot 1)^\vee+(s^{-n}a^\vee\ot1\ot a\ot 1)^\vee) \vspace{2mm}\\
& \in (\T \ot_{K^e}(B\T \ot \T \ot B\T))^\vee \end{array}$$
which is of degree $n$.

The natural projection $\T=A\oplus s^{-n}A^\vee \twoheadrightarrow A$ is a strict augmented $A_\infty$-algebra morphism.
It induces a surjective $k$-module complex morphism
$$\pi: \T \ot_{K^e} B\T \rightarrow A\ot_{K^e} BA$$
between the Hochschild chain complexes of $\T$ and $A$, and further an injective $k$-module complex morphism
$$\pi^\vee:(A\ot_{K^e} BA)^\vee\rightarrow (\T \ot_{K^e} B\T )^\vee.$$
The Hochschild 2-cocycle $\az\in Z_{-2}\Hom_{K^e}(BA,A^\vee[-n])$ induces an $(n-2)$-cycle $s^n\az\in Z_{n-2}\Hom_{K^e}(BA,A^\vee)=Z_{n-2}(A\ot_{K^e}BA)^\vee$.
It is easy to check that
$$d_{(\T\ot_{K^e} B\T)^\vee}(\tilde{\theta})=-b^\vee_\T(\tilde{\theta})=-b^\vee_\T((-1)^{n-1}p^\vee(s^{n-1}\theta))=\pi^\vee(s^n\az)$$
where $b_\T$ is the differential of the Hochschild chain complex $\T\ot_{K^e} B\T$ of $\T$.

Since $[\az]\in H^2(A,A^\vee[-n])=HH_{2-n}(A)^\vee=H_{2-n}(A\ot_{K^e}BA)^\vee$ is exact,
$[s^n\alpha]\in H_{2-n}(A\ot_{K^e}BA)^\vee$ admits a lift $[\eta]\in HC_{2-n}(A)^\vee=H_{n-2}(CC_{\bullet}(A)^\vee)=H_{n-2}(C_{\bullet}(A)^\vee[[u]])$
along the map $I_{2-n}^\vee : HC_{2-n}(A)^\vee \to HH_{2-n}(A)^\vee$.
Suppose
$$\eta=\eta^0+\eta^1u+\eta^2u^2+\cdots$$
in $CC_{\bullet}(A)^\vee=(C_{\bullet}(A)^\vee[[u]],-b^\vee_A-B^\vee_A u)$,
where $\eta^i \in C_{\bullet}(A)^\vee=(A\ot_{K^e}BA)^\vee$, $|\eta^i|=n-2+2i$, $\eta^0=s^n\alpha$, and $b^\vee_A(\eta^{i+1})+B^\vee_A(\eta^i)=0$ for all $i\ge 0$.

Now we want to lift $\eta^1\in (A\ot_{K^e}BA)^\vee$ to $\tilde{\eta}^1 \in (A\ot_{K^e}(BA\ot A\ot BA))^\vee$
along the map $\delta^\vee_A: (A\ot_{K^e}(BA\ot A\ot BA))^\vee \to (A\ot_{K^e}BA)^\vee$,
i.e., find an element $\tilde{\eta}^1\in (A\ot_{K^e}(BA\ot A\ot BA))^\vee$ such that
$\delta^\vee_A(\tilde{\eta}^1)=\eta^1$, and
$\wt{B}^\vee_\T(\tilde{\theta})+\tilde{q}^\vee(\tilde{\eta}^1) \in (\T\ot_{K^e}(B\T\ot\T\ot B\T))^\vee$ is an $n$-cycle,
where $\tilde{q}^\vee:(A\ot_{K^e}(BA\ot A\ot BA))^\vee\rightarrow (\T \ot_{K^e}(B\T\ot\T\ot B\T))^\vee$
is the dual of the $k$-module complex morphism $\tilde{q}:\T \ot_{K^e}(B\T\ot\T\ot B\T) \to A\ot_{K^e}(BA\ot A\ot BA)$
induced by the natural projection $q:\T \twoheadrightarrow A$.

Keep in mind the following commutative diagram:
$$\xymatrix{
(A\ot_{K^e}BA)^\vee \ar[rr]^-{\wt{B}_A^\vee} \ar[dr]^-{B_A^\vee} \ar[dd]^-{\pi^\vee} && (A\ot_{K^e}(BA\ot A\ot BA))^\vee \ar[dl]^-{\dz_A^\vee} \ar[dd]^-{\wt{q}^\vee} \\
& (A\ot_{K^e}BA)^\vee \ar[dd]^(.3){\pi^\vee} & \\
(\T\ot_{K^e}B\T)^\vee \ar[rr]^(.3){\wt{B}_{\T}^\vee} \ar[dr]^-{B_{\T}^\vee} && (\T\ot_{K^e}(B\T\ot\T\ot B\T))^\vee \ar[dl]^-{\dz_{\T}^\vee}\\
& (\T\ot_{K^e}B\T)^\vee &
}$$

From $B^\vee_A(\eta^0)+b^\vee_A(\eta^1)=0$, we know $B^\vee_A(\eta^0)\in (A\ot_{K^e}BA)^\vee$ is a boundary.
Since $B^\vee_A(\eta^0)=\delta^\vee_A\wt{B}^\vee_A(\eta^0)$ and
$\delta^\vee_A:(A\ot_{K^e}(BA\ot A\ot BA))^\vee \twoheadrightarrow (A\ot_{K^e}BA)^\vee$ is a surjective quasi-isomorphism,
$\wt{B}^\vee_A(\eta^0) \in (A\ot_{K^e}(BA\ot A\ot BA))^\vee$ is also a boundary.
Thus there exists $\tilde{\eta}' \in (A\ot_{K^e}(BA\ot A\ot BA))^\vee$ such that $\wt{B}^\vee_A(\eta^0)=-\tilde{b}^\vee_A(\tilde{\eta}')$
where $\tilde{b}_A$ is the differential of $A\ot_{K^e}(BA\ot A\ot BA)$.
Since $\delta^\vee_A\wt{B}_A(\eta^0)=-\delta^\vee_A\tilde{b}^\vee_A(\tilde{\eta}')$,
we have $B^\vee_A(\eta^0)=-b^\vee_A\delta^\vee_A(\tilde{\eta}')$.
On the other hand, $B^\vee_A(\eta^0)+b^\vee_A(\eta^1)=0$, so we have $b^\vee_A\delta^\vee_A(\tilde{\eta}')=b^\vee_A(\eta^1)$,
which implies that $\delta^\vee_A(\tilde{\eta}')-\eta^1 \in (A\ot_{K^e}BA)^\vee$ is an $n$-cycle.
Since $\delta^\vee_A$ is a surjective quasi-isomorphism, it induces a surjection on cycles:
$$\delta^\vee_A:Z_\bullet(A\ot_{K^e}(BA\ot A\ot BA))^\vee \twoheadrightarrow Z_{\bullet}(A\ot_{K^e}BA)^\vee.$$
Thus there exists $\tilde{\eta}''\in Z_n(A\ot_{K^e}(BA\ot A\ot BA))^\vee$, which implies $\tilde{b}^\vee_A(\tilde{\eta}'')=0$, such that
$\delta^\vee_A(-\tilde{\eta}'')=\delta^\vee_A(\tilde{\eta}')-\eta^1$, i.e., $\delta^\vee_A(\tilde{\eta}'+\tilde{\eta}'')=\eta^1$. Let $\tilde{\eta}^1=\tilde{\eta}'+\tilde{\eta}''$.
Then $\delta^\vee_A(\tilde{\eta}^1)=\eta^1$.

Since
$b^\vee_\T(\wt{B}^\vee_\T(\tilde{\theta})+\tilde{q}^\vee(\tilde{\eta}^1))
= -\wt{B}^\vee_\T b^\vee_\T(\tilde{\theta})+\tilde{b}^\vee_\T\tilde{q}^\vee(\tilde{\eta}^1)
=\wt{B}^\vee_\T(\pi^\vee(\eta^0))+\tilde{q}^\vee(\tilde{b}^\vee_A\tilde{\eta}^1)
=\tilde{q}^\vee(\wt{B}^\vee_A(\eta^0)+\tilde{b}^\vee_A(\tilde{\eta}^1))$
and
$\wt{B}^\vee_A(\eta^0)+\tilde{b}^\vee_A(\tilde{\eta}^1)
=\wt{B}^\vee_A(\eta^0)+\tilde{b}^\vee_A(\tilde{\eta}'+\tilde{\eta}'')
=(\wt{B}^\vee_A(\eta^0)+\tilde{b}^\vee_A(\tilde{\eta}'))+\tilde{b}^\vee_A(\tilde{\eta}'')
=0,$
we have $b^\vee_\T(\wt{B}^\vee_\T(\tilde{\theta})+\tilde{q}^\vee(\tilde{\eta}^1))=0$,
i.e., $\wt{B}^\vee_\T(\tilde{\theta})+\tilde{q}^\vee(\tilde{\eta}^1)\in \T \ot_{K^e}(B\T\ot\T\ot B\T)$ is an $n$-cycle.

The element $s^{-n}(\wt{B}_{\T}^\vee(\tilde{\theta})+\tilde{q}^\vee(\tilde{\eta}^1)) \in s^{-n}(\T\ot_{K^e}(B\T\ot\T\ot B\T))^\vee$ is a 0-cycle.
It corresponds to a dg $B\T$-bicomodule isomorphism $\phi:B\T\ot\T\ot B\T \rightarrow B\T\ot s^{-n}\T^\vee\ot B\T$.
Indeed, by Lemma \ref{Lemma-A-bimod-iso}, it is enough to show that $\phi_{0,0}:K\ot\T\ot K\to K\ot s^{-n}\T^\vee\ot K$ is a dg $K$-bimodule isomorphism.
Note that $s^{-n}\mathbb{T}^\vee= s^{-n}(A\oplus s^{-n}A^\vee)^\vee\cong s^{-n}A^\vee\oplus A=\T$,
where we identify $(s^{-n}A^\vee)^\vee$ with $s^nA$ by defining  $s^na(s^{-n}f)=(-1)^{|a|(n+|f|)}f(a)$ for all $a\in A$.
Now we consider the restricted map $\phi_{0,0}: \mathbb{T}\rightarrow s^{-n}\mathbb{T}^\vee$ of the map $\phi$ induced by
$s^{-n}\wt{B}_{\T}^\vee(\tilde{\theta})+s^{-n}\tilde{q}^\vee(\tilde{\eta}^1)$.
The effect of $s^{-n}\wt{B}^\vee(\tilde{\theta})$ is identifying $\mathbb{T}$ with $s^{-n}\mathbb{T}^\vee$.
The effect of $s^{-n}\tilde{q}^\vee(\tilde{\eta}^1)$ is sending $A$ in $\mathbb{T}$ to $s^{-n}A^\vee$ in $s^{-n}\mathbb{T}^\vee$,
and sending $s^{-n}A^\vee$ in $\mathbb{T}$ to zero.
Thus $\phi_{0,0}$ is a dg $K$-bimodule isomorphism.
\end{proof}

The following cohomological criterion of symmetric Hochschild extension generalizes \cite[Theorem 1]{OhnTakYam99}.

\begin{proposition} \label{Proposition-OhnTakYam-Gen}
Let $A$ be a finite dimensional elementary $k$-algebra and $[\az]\in H^2(A,A^\vee)$ exact.
Then $\T (A,\az)$ is a symmetric algebra.
\end{proposition}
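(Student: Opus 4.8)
The plan is to deduce Proposition \ref{Proposition-OhnTakYam-Gen} from Theorem \ref{Theorem-HochExt-Sym} by recognizing that, in the classical setting of a finite dimensional elementary $k$-algebra $A$ viewed as a dg $K$-algebra concentrated in degree zero (with $K=k^t$ for $t$ the number of vertices), the ordinary Hochschild extension $\T(A,\az)$ coincides, up to $A_\infty$-quasi-isomorphism, with the extension $\T_0(A,\az)=\T(A,A^\vee,\az)$ produced by the machinery of Section 3. First I would check that an elementary algebra $A$, regarded as a dg algebra with zero differential concentrated in degree $0$, is a finite dimensional complete typical dg $K$-algebra: ellmentarity gives $\ol{A}=\rad A$ nilpotent, hence completeness, and concentration in degree $0$ makes it trivially typical with $A_0=K$. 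With $n=0$ the hypothesis $A_{2-n}=A_2=0$ holds automatically.

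Next I would verify that for such an $A$, a classical Hochschild $2$-cocycle $\az\colon A\ot A\to A^\vee$ is precisely a weight-$2$, degree-$(-2)$ cochain in $C^\bullet(A,A^\vee)$ that is a cocycle for the differential $\dz=\dz_0+\dz_1$ described in the excerpt, and that the resulting $A_\infty$-algebra $\T_0(A,\az)$ has $m_1=0$, $m_2$ the usual trivial-extension-type product twisted by $\az$ in the obvious way, and $m_n=0$ for $n\geq 3$ — in other words, $\T_0(A,\az)$ is an honest associative algebra and it is exactly the classical Hochschild extension $\T(A,\az)$. This is a direct unwinding of the formula for $\bar d$ in the definition of $\T(A,M,\az)$: since $A$ and $M=A^\vee$ are concentrated in degree $0$, the only surviving components of $\bar d$ are those coming from $d_1$ (the product) and from $-s\circ\az$, which together encode an associative (graded-)algebra structure with zero higher multiplications.

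Then the conclusion is immediate: since $[\az]\in H^2(A,A^\vee)=HH_0(A)^\vee$ is assumed exact, Theorem \ref{Theorem-HochExt-Sym} applies with $n=0$ and tells us $\T_0(A,\az)=\T(A,\az)$ is a $0$-symmetric $A_\infty$-algebra, i.e.\ $\T(A,\az)\cong\T(A,\az)^\vee$ as $A_\infty$-bimodules. Finally I would translate $0$-symmetry of an ordinary algebra back into the classical notion of symmetric algebra: an $A_\infty$-$\T$-bimodule isomorphism $\T\cong\T^\vee$ restricts on the $(0,0)$-component to a $K$-bimodule isomorphism $f_{0,0}\colon\T\to\T^\vee$, and compatibility with the $A_\infty$-bimodule structure (equivalently, the dg $B\T$-bicomodule isomorphism of Lemma \ref{Lemma-A-bimod-iso}) forces $f_{0,0}$ to be a $\T$-bimodule map, hence a nondegenerate associative symmetric bilinear form on $\T$; this is exactly the statement that $\T(A,\az)$ is a symmetric $k$-algebra.

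The main obstacle I anticipate is the last translation step: one must argue carefully that the higher components $f_{p,q}$ of the $A_\infty$-bimodule isomorphism do not interfere, so that $f_{0,0}$ alone already witnesses the classical symmetry. Concretely, since all structure maps $m_n$ and $m_{p,q}$ of $\T$ vanish for $n\geq 3$ and are the plain bimodule action for $p+q\geq 1$, the defining equations for an $A_\infty$-bimodule morphism collapse, and one checks that $f_{0,0}\circ(\text{left and right action})$ agrees with $(\text{action on }\T^\vee)\circ f_{0,0}$ up to the usual Koszul signs, which for a degree-$0$ algebra are trivial; the symmetry of the form then comes from the fact that $\T^\vee$ carries the bimodule structure dual to $\T$, so $f_{0,0}(x)(y)=f_{0,0}(y)(x)$. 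Everything else is bookkeeping of the sort already carried out in the proof of Theorem \ref{Theorem-HochExt-Sym}, and I would simply cite that theorem rather than redo it.
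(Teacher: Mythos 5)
Your proposal is correct and follows essentially the same route as the paper: invoke Theorem \ref{Theorem-HochExt-Sym} with $n=0$ after observing that an elementary algebra in degree $0$ is a finite dimensional complete typical dg $K$-algebra, then use concentration in degree $0$ to reduce the $A_\infty$-bimodule isomorphism to its component $\phi_{0,0}\colon\T\to\T^\vee$, which is a $\T$-bimodule isomorphism. The only cosmetic difference is at the "higher components do not interfere" step, where the paper disposes of all $F_{p,q}$ with $(p,q)\neq(0,0)$ by the one-line degree count $\Hom_{K^e}(B\T\ot\T\ot B\T,\T^\vee)_0\cong\Hom_{K^e}(\T,\T^\vee)$ rather than by examining the collapsed morphism equations as you suggest; both arguments are valid.
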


\begin{proof} Since $[\alpha] \in H^2(A,A^\vee)$ is exact, it follows from Theorem \ref{Theorem-HochExt-Sym} that
$\T=\T(A,\az)$ is a 0-symmetric $A_\infty$-algebra.
Thus there is a dg $B\T$-bicomodule isomorphism $\phi:B\T\ot\T\ot B\T \to B\T\ot \T^\vee\ot B\T$.
Since $\T$ is concentrated in degree 0, we have $\Hom_{K^e}(B\T\ot\T\ot B\T,\T^\vee)_0\cong \Hom_{K^e}(\T,\T^\vee)$.
Thus $\phi$ must be given by a map $\phi_{0,0}: \T\to\T^\vee$.
The compatibility of $\phi$ with differential implies that $\phi_{0,0}$ is a $\T$-bimodule morphism.
Since $\phi$ is a bijection, $\phi_{0,0}$ is a bijection too.
Thus $\phi$ is a $\T$-bimodule isomorphism. So $\T$ is a symmetric algebra.
\end{proof}

\section{Koszul duality}

In this section, utilizing the relation between the Hochschild homologies of a dg $K$-algebra and its Koszul dual,
we introduce the Koszul dual of a Hochschild cohomology class.
Employing the relations between the Hochschild (co)homologies and (negative) cyclic homologies of a dg $K$-algebra and its Koszul dual,
we set up the correspondence between the exact Hochschild cohomology classes of a dg $K$-algebra
and the almost exact Hochschild homology classes of its Koszul dual,
which is crucial for studying the Koszul duals of exact Hochschild extensions.

\subsection{Hochschild (co)homology and Koszul duality}

The relations between the Hochschild (co)homologies of a dg $K$-algebra and its Koszul dual were clarified in \cite{HanLiuWan18}.

\begin{theorem} \label{Theorem-HHHH_Connes} {\rm (\cite[Theorem 8]{HanLiuWan18})}
Let $A$ be a complete typical dg $K$-algebra. Then

{\rm (1)} there is an isomorphism $h^\bullet : HH^\bullet(A) \rightarrow HH^\bullet(A^\dag)$ of Gerstenharber algebras;

{\rm (2)} there is an isomorphism
$h_\bullet : HH_{\bullet}(A)^\vee \rightarrow HH_{-\bullet}(A^\dag)$ of graded $k$-modules
such that the following diagram is commutative:
$$\xymatrix{ HH_{\bullet}(A)^\vee \ar[r]^-{B^\vee} \ar[d]^-{h_\bullet} & HH_{\bullet}(A)^\vee \ar[d]^-{h_\bullet}\\
HH_{-\bullet}(A^\dag) \ar[r]^-{B} & HH_{-\bullet}(A^\dag). }$$
\end{theorem}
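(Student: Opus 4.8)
The plan is to deduce both statements from a chain-level comparison between the Hochschild complexes of $A$ and $A^\dag=\Omega(A^\vee)$, exploiting that $A$ is complete and typical so that $(BA)^\vee\cong\Omega(A^\vee)$ and everything in sight is locally finite. First I would recall the standard twisting-cochain picture: for a typical $A$, the universal twisting morphism $\pi:BA\to A$ dualizes to a twisting morphism $A^\vee\to B(A^\dag)$, and the associated Koszul-type comparison maps are quasi-isomorphisms precisely because $A$ (hence $A^\vee$) is conilpotent/complete. Concretely, the two-sided bar resolution $A\ot BA\ot A$ and the dual object for $A^\dag$ are linked by an explicit map, and one shows $C^\bullet(A)=\Hom_{A^e}(A\ot BA\ot A,A)\simeq\Hom_{(A^\dag)^e}(\cdots)=C^\bullet(A^\dag)$ as dg algebras (indeed as $B_\infty$- or brace-algebras, which is what upgrades the isomorphism on cohomology to one of Gerstenhaber algebras in part (1)). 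For part (2), the same comparison on the homology side gives a quasi-isomorphism $C_\bullet(A)^\vee\simeq C_\bullet(A^\dag)$ after a degree flip, using the locally finite hypothesis to pass the graded dual through the (completed) tensor products, which is exactly the source of the variance change $HH_\bullet(A)^\vee\cong HH_{-\bullet}(A^\dag)$.

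The key steps, in order, would be: (i) set up the twisting cochain $t:A^\vee\to\Omega(A^\vee)=A^\dag$ and the induced chain maps between $C_\bullet(A)$, $C_\bullet(A^\dag)$ and their duals, checking these are morphisms of mixed complexes where relevant (this is where the Connes operator $B$ enters); (ii) prove these comparison maps are quasi-isomorphisms — here one filters by weight (tensor length), observes the associated graded is a Koszul-complex computation, and invokes completeness/conilpotency so the filtration is exhaustive and the spectral sequence converges; (iii) for part (1), verify the comparison respects the cup product and the brace operations, so that the isomorphism $h^\bullet$ intertwines the Gerstenhaber brackets and is an isomorphism of Gerstenhaber algebras; (iv) for part (2), track the Connes operator through the comparison to get the commuting square with $B^\vee$ on the left and $B$ on the right — this amounts to checking that the cyclic-permutation operator on $C_\bullet(A)$ is dual, under $t$, to the one on $C_\bullet(A^\dag)$, which is a direct sign bookkeeping once the comparison map is written explicitly.

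The main obstacle I expect is step (iv) together with the sign bookkeeping throughout: making the diagram with $B^\vee$ and $B$ strictly commute (not just up to homotopy) requires choosing the comparison quasi-isomorphism carefully and chasing suspensions, Koszul signs from the graded dual, and the degree reversal convention (lower degree $i$ = upper degree $-i$) consistently; a naive comparison map typically only commutes with $B$ up to an explicit homotopy, so one either shows that homotopy is null on homology or replaces the map by a strictified version. The convergence argument in step (ii) is routine given completeness and typicality (these are exactly the hypotheses ensuring local finiteness of $BA$ and $(BA)^\vee$, as noted after the definition of typical dg algebras), and the multiplicativity in step (iii) follows the classical Koszul-duality template, so the real work is organizing the mixed-complex comparison so that both the ring structure in (1) and the $B$-compatibility in (2) fall out of a single well-chosen chain-level map. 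Since this is exactly \cite[Theorem 8]{HanLiuWan18}, I would ultimately cite that reference for the detailed verification and only reproduce the structure of the argument here.
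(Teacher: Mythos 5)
The paper gives no proof of this theorem: it is quoted directly from \cite[Theorem 8]{HanLiuWan18}, so there is no internal argument to compare yours against, and your decision to defer the detailed verification to that reference matches the paper's own treatment. Your outline is moreover consistent with what the cited proof evidently does --- Section 4.2 of this paper reveals that it constructs an explicit chain-level quasi-isomorphism $\varrho=\omega_{A,BA}\circ(\psi\otimes\id)\circ\tilde{\tilde{\rho}}\colon C_{-\bullet}(A^\dag)\to C_{\bullet}(A)^\vee$ which is a morphism of dg $\Lambda$-modules, i.e.\ strictly intertwines the Connes operators at the chain level, which is precisely the resolution of the strict-versus-up-to-homotopy commutation issue you correctly flag in step (iv).
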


\bigskip

Thanks to Theorem \ref{Theorem-HHHH_Connes}, we have the following concept which is crucial for studying the Koszul duals of exact Hochschild extensions.

\begin{definition}{\rm
Let $A$ be a complete typical dg $K$-algebra and $[\az] \in H^\bullet(A,A^\vee)=HH_\bullet(A)^\vee$.
The {\it Koszul dual} of the Hochschild cohomology class $[\az]$ is the Hochschild homology class $[\az^\dag] := h_\bullet([\az])\in HH_\bullet(A^\dag)$.
}\end{definition}

\subsection{Cyclic homology and Koszul duality}

Now we clarify the relation between the cyclic homologies of a complete typical dg $K$-algebra $A$ and its Koszul dual $A^\dag$.

Let ${C_{\bullet}(A)}^\vee$ be the graded dual of the Hochschild complex $C_{\bullet}(A)$ of $A$.
Then ${C_{\bullet}(A)}^\vee$ is a dg $\Lambda$-module with the action of $\epsilon$ given by the operator $B^\vee$.
By the proof of \cite[Theorem 8]{HanLiuWan18}, we have the following commutative diagram
$$\xymatrix{
C_{-\bullet}(A^\dag)\ar[d]_-{\varrho}\ar[r]^-{B_{A^\dag}}&C_{-\bullet}(A^\dag)\ar[d]^-{\varrho}\\
C_{\bullet}(A)^\vee \ar[r]^-{B^\vee_A} & C_{\bullet}(A)^\vee
}$$
where $\varrho:=\omega_{A,BA} \circ (\psi \otimes \id) \circ \tilde{\tilde{\rho}}$.
It implies that the map $\varrho$ is a quasi-isomorphism of dg $\Lambda$-modules.

Acting the functors $C_\bullet(A^\dag)[[u]] \hat{\ot}_{k[[u]]} -$ and $C_\bullet(A)^\vee[[u]] \hat{\ot}_{k[[u]]} -$ on the
the short exact sequence
$$0 \to uk[[u]] \to k[[u]] \to k[[u]]/uk[[u]] \to 0$$
of $k[[u]]$-modules, we get the following commutative diagram
$$\xymatrix{
0 \ar[r] & CN_{-\bullet}(A^\dag)[-2] \ar[r] \ar[d] &  CN_{-\bullet}(A^\dag) \ar[r] \ar[d] & C_{-\bullet}(A^\dag) \ar[r] \ar[d] & 0 \\
0 \ar[r] & CC_{\bullet}(A)^\vee[-2] \ar[r] & CC_{\bullet}(A)^\vee \ar[r] & C_{\bullet}(A)^\vee \ar[r] & 0.
}$$
with exact rows and quasi-isomorphic columns.
Indeed, the right column is just the quasi-isomorphism $\varrho : C_{-\bullet}(A^\dag) \to C_{\bullet}(A)^\vee$,
the middle column is the quasi-isomorphism $CN_{-\bullet}(A^\dag)=C_{-\bullet}(A^\dag)[[u]] \xrightarrow{\varrho[[u]]} 
C_{\bullet}(A)^\vee[[u]] \cong CC_{\bullet}(A)^\vee$ induced by $\varrho$,
and the left column is the shift of the middle one.

By taking homology, we obtain the following result:

\begin{proposition} \label{Proposition-HC-KoszulDual}
Let $A$ be a complete typical dg $K$-algebra and $A^\dag$ its Koszul dual. Then there exists a graded $k$-vector space isomorphism $c_\bullet : HC_\bullet(A)^\vee \to HN_{-\bullet}(A^\dag)$ such that the following diagram is commutative:
$$\xymatrixcolsep{1pc}\xymatrixrowsep{2pc}\xymatrix{
\cdots \ar[r] & HH_{n-1}(A)^\vee \ar[r]^-{B^{\vee}_n} \ar[d]^-{h_{n-1}}_-{\cong} & HC_{n-2}(A)^\vee \ar[r]^-{S^\vee_n} \ar[d]^-{c_{n-2}}_-{\cong} & HC_n(A)^\vee \ar[r]^-{I^\vee_n} \ar[d]^-{c_n}_-{\cong} & HH_n(A)^\vee \ar[r] \ar[d]^-{h_n}_-{\cong} & \cdots \\
\cdots \ar[r] & HH_{-n+1}(A^\dag) \ar[r]^-{B^{'\vee}_{-n}} & HN_{-n+2}(A^\dag) \ar[r]^-{S'_{-n}} & HN_{-n}(A^\dag) \ar[r]^-{P_{-n}} & HH_{-n}(A^\dag) \ar[r] & \cdots
}$$
\end{proposition}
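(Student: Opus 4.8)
The plan is to obtain the asserted diagram as the morphism of long exact homology sequences induced by the commutative diagram of short exact sequences of complexes displayed immediately above the statement. Recall from the proof of \cite[Theorem 8]{HanLiuWan18} the quasi-isomorphism $\varrho : C_{-\bullet}(A^\dag)\to C_\bullet(A)^\vee$ of dg $\Lambda$-modules. Applying the functors $C_\bullet(A^\dag)[[u]]\hat{\ot}_{k[[u]]}-$ and $C_\bullet(A)^\vee[[u]]\hat{\ot}_{k[[u]]}-$ to the short exact sequence $0\to uk[[u]]\to k[[u]]\to k[[u]]/uk[[u]]\to 0$ of $k[[u]]$-modules yields, as already recorded, the two short exact sequences
$$0\to CN_{-\bullet}(A^\dag)[-2]\to CN_{-\bullet}(A^\dag)\to C_{-\bullet}(A^\dag)\to 0$$
$$0\to CC_\bullet(A)^\vee[-2]\to CC_\bullet(A)^\vee\to C_\bullet(A)^\vee\to 0$$
joined by the vertical chain maps $(\varrho[[u]])[-2]$, $\varrho[[u]]$, $\varrho$, all quasi-isomorphisms. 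First I would pass to the long exact homology sequence of each row; since the three vertical maps are quasi-isomorphisms, the maps they induce between the two long exact sequences are isomorphisms, and the resulting ladder commutes on the nose by naturality of connecting homomorphisms together with the strict commutativity of the diagram of complexes.

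It then remains to identify the terms and maps with those in the statement. For the second short exact sequence above, one has $H_\bullet(C_\bullet(A)^\vee)=HH_\bullet(A)^\vee$, and $CC_\bullet(A)^\vee\cong C_\bullet(A)^\vee[[u]]$ is the negative cyclic complex of the mixed complex $(C_\bullet(A)^\vee,b^\vee,B^\vee)$, so by exactness of $(-)^\vee$ one has $H_\bullet(CC_\bullet(A)^\vee)=HC_\bullet(A)^\vee$; moreover this short exact sequence is the $k$-dual of the one producing Connes' exact sequence $\cdots\to HH_n(A)\xrightarrow{I_n}HC_n(A)\xrightarrow{S_n}HC_{n-2}(A)\xrightarrow{B_n}HH_{n-1}(A)\to\cdots$, so its long exact homology sequence is the dual of Connes' sequence, i.e. the top row of the statement, with horizontal maps $S^\vee$, $I^\vee$ and connecting map $B^\vee$. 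For the first short exact sequence above, one has $H_\bullet(C_{-\bullet}(A^\dag))=HH_{-\bullet}(A^\dag)$ and $H_\bullet(CN_{-\bullet}(A^\dag))=HN_{-\bullet}(A^\dag)$, and its long exact homology sequence is the bottom row of the statement, with horizontal maps $S'$, $P$ and connecting map $B^{'\vee}$; this is the $HH$--$HN$--$HN$ long exact sequence attached to multiplication by $u$, in the situation of Lemma \ref{Lemma-Hoch-Cyc-LongExSeq} applied to $A^\dag$. Finally, $h_\bullet$ is, by its construction in \cite[Theorem 8]{HanLiuWan18}, the inverse of the homology isomorphism $HH_{-\bullet}(A^\dag)\to HH_\bullet(A)^\vee$ induced by $\varrho$ (compatible with $B$ and $B^\vee$ by Theorem \ref{Theorem-HHHH_Connes}(2)), and I would \emph{define} $c_\bullet$ to be the inverse of the homology isomorphism $HN_{-\bullet}(A^\dag)\to HC_\bullet(A)^\vee$ induced by $\varrho[[u]]$. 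With these identifications the ladder produced in the first paragraph is precisely the diagram in the statement, and its commutativity has already been noted.

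The only genuinely non-formal input is the $\Lambda$-module quasi-isomorphism $\varrho$, which is supplied by \cite{HanLiuWan18}; given it, the whole argument is a diagram chase. The step I expect to demand the most care is the bookkeeping of signs and degree shifts: checking that the differential on $CC_\bullet(A)^\vee$ dual to $b+uB$ genuinely makes it the negative cyclic complex of the dual mixed complex, so that $H_\bullet(CC_\bullet(A)^\vee)=HC_\bullet(A)^\vee$ with the grading as written; that the connecting homomorphism of the second short exact sequence is $B^\vee$ and not a sign or shift of it; and that after the regrading $\bullet\mapsto-\bullet$ the indices in the lower row come out as $-n+1$, $-n+2$, $-n$, $-n$. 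These verifications are routine once the Hochschild and (negative) cyclic complexes and their graded duals are normalized as in the Preliminaries, but they must be carried out consistently throughout.
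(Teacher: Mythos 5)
Your proposal is correct and follows essentially the same route as the paper: the paper also obtains the diagram by taking homology of the displayed commutative diagram of short exact sequences of complexes (derived from $0\to uk[[u]]\to k[[u]]\to k[[u]]/uk[[u]]\to 0$) whose columns are the quasi-isomorphisms induced by $\varrho$, with $c_\bullet$ being the isomorphism induced by $\varrho[[u]]$ under the identification $CC_\bullet(A)^\vee\cong C_\bullet(A)^\vee[[u]]$. Your write-up merely makes explicit the identifications of the two long exact sequences and the naturality of the connecting maps, which the paper leaves implicit.
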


\bigskip

Applying Proposition \ref{Proposition-HC-KoszulDual}, we can obtain the correspondence
between the exact Hochschild cohomology classes of $A$ with coefficients in $A^\vee$ and the almost exact Hochschild classes of $A^\dag$.

\begin{proposition} \label{Proposition-Ex-AlmostEx}
Let $A$ be a complete typical dg $K$-algebra. Then the Hochschild cohomology class $[\az]\in H^n(A,A^\vee) = HH_n(A)^\vee$ is exact if and only if
its Koszul dual Hochschild homology class $[\az^\dag]\in HH_{-n}(A^\dag)$ is almost exact.
\end{proposition}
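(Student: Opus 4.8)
The plan is to deduce the equivalence directly from the commutative diagram furnished by Proposition \ref{Proposition-HC-KoszulDual}, so that the proof reduces to a short diagram chase. First I would recall the relevant definitions: a Hochschild cohomology class $[\az]\in H^n(A,A^\vee)=HH_n(A)^\vee$ is exact if and only if it lies in $\Im I_n^\vee$; the Koszul dual Hochschild homology class is, by definition, $[\az^\dag]:=h_n([\az])\in HH_{-n}(A^\dag)$, where $h_n:HH_n(A)^\vee\xrightarrow{\cong}HH_{-n}(A^\dag)$ is the isomorphism of Theorem \ref{Theorem-HHHH_Connes}(2); and $[\az^\dag]$ is almost exact if and only if it lies in $\Im P_{-n}$, where $P_{-n}:HN_{-n}(A^\dag)\to HH_{-n}(A^\dag)$ is the natural map occurring in the bottom row of that diagram.

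Next I would single out the rightmost square of the diagram in Proposition \ref{Proposition-HC-KoszulDual}, whose commutativity is the identity $h_n\circ I_n^\vee=P_{-n}\circ c_n$ with $c_n:HC_n(A)^\vee\xrightarrow{\cong}HN_{-n}(A^\dag)$ an isomorphism. Comparing images and using that $\Im(g\circ f)=g(\Im f)$ together with the surjectivity of $c_n$ gives
$$h_n\bigl(\Im I_n^\vee\bigr)=\Im\bigl(h_n\circ I_n^\vee\bigr)=\Im\bigl(P_{-n}\circ c_n\bigr)=P_{-n}\bigl(\Im c_n\bigr)=\Im P_{-n}.$$
Since $h_n$ is bijective, it follows that $[\az]\in\Im I_n^\vee$ if and only if $h_n([\az])\in h_n(\Im I_n^\vee)=\Im P_{-n}$; and since $h_n([\az])=[\az^\dag]$, this is precisely the assertion that $[\az]$ is exact if and only if $[\az^\dag]$ is almost exact.

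I do not expect any genuine obstacle at this point: all of the analytic input --- constructing the quasi-isomorphism $\varrho$ of dg $\Lambda$-modules, building the compatible short exact sequences of negative cyclic complexes, and thereby producing the isomorphism $c_\bullet$ matching $h_\bullet$ --- is already packaged in Proposition \ref{Proposition-HC-KoszulDual}, and the present statement is its immediate corollary. (If one wished to bypass that proposition, one could instead argue that exactness of $[\az]$ means $B_{n+1}^\vee([\az])=0$ and transport this vanishing through $h_\bullet$ via the commuting square $h_\bullet\circ B^\vee=B\circ h_\bullet$ of Theorem \ref{Theorem-HHHH_Connes}(2) into a vanishing of the relevant connecting map for $A^\dag$; but this merely re-proves a slice of Proposition \ref{Proposition-HC-KoszulDual} and brings nothing new.)
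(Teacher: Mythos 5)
Your proposal is correct and follows essentially the same route as the paper: both proofs isolate the commutative square $h_n\circ I_n^\vee = P_{-n}\circ c_n$ from Proposition \ref{Proposition-HC-KoszulDual} and conclude via $h_n(\Im I_n^\vee)=\Im(P_{-n}\circ c_n)=\Im P_{-n}$ together with the bijectivity of $h_n$ and $c_n$. Your write-up simply makes the image-chase slightly more explicit than the paper's one-line version.
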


\begin{proof}
By Proposition \ref{Proposition-HC-KoszulDual}, we have the following commutative diagram:
$$\xymatrix{ HC_n(A)^\vee \ar[r]^-{I_n^\vee} \ar[d]_-{\cong}^-{c_n} & HH_n(A)^\vee \ar[d]_-{\cong}^-{h_n} \ar@{=}[r] & H^n(A,A^\vee) \\
HN_{-n}(A^\dag) \ar[r]^-{P_{-n}} & HH_{-n}(A^\dag) & }$$
Thus $[\az]$ is exact, i.e.,
$[\az]\in\Im I_n^\vee$, if and only if $[\az^\dag]=h_n([\az]) \in\Im (h_n\circ I_n^\vee) = \Im (P_{-n} \circ c_n) = \Im P_{-n}$,
if and only if $[\az^\dag]$ is almost exact.
\end{proof}

\section{Deformed Calabi-Yau completions}

In this section, we show that the Koszul dual of trivial extension is Calabi-Yau completion,
and the Koszul dual of exact Hochschild extension is deformed Calabi-Yau completion.

\subsection{Calabi-Yau dg algebras}

Let $A$ be a homologically smooth dg $K$-algebra and $A^e=A^\op\ot_k A$ its enveloping dg algebra.
Then the derived Hom-functor $\RHom_{A^e}(-,A^e)$ induces a dual on the perfect derived category $\per (A^e)$ of $A^e$.
Thus we have isomorphisms
$\RHom_{A^e}(\RHom_{A^e}(A,A^e)[n],A) \cong A\ot^L_{A^e}\RHom_{A^e}(\RHom_{A^e}(A,A^e),A^e)[-n] \cong A\ot^L_{A^e}A[-n]$.
Therefore, the morphisms in $\Hom_{\mathcal{D}(A^e)}(\RHom_{A^e}(A,A^e)[n],A)$ correspond bijectively to the Hochschild homology classes in $HH_n(A)$,
where $\mathcal{D}(A^e)$ is the unbounded derived category of $A^e$ (see \cite{Kel94}).

\begin{definition}{\rm (\cite{Gin06,Van15,Her18,dTdVVdB18})
A homologically smooth dg $K$-algebra $A$ is {\it Calabi-Yau of dimension $n$} or {\it $n$-Calabi-Yau}
if there is an isomorphism
$$\xi : \RHom_{A^e}(A,A^e)[n] \to A$$
in the derived category $\mathcal{D}(A^e)$ of $A$-bimodules.
An $n$-Calabi-Yau dg algebra $A$ is {\it almost exact} if $[\xi] \in HH_n(A)$ is almost exact,
and {\it exact} if $[\xi] \in HH_n(A)$ is exact.
}\end{definition}

Obviously, exact Calabi-Yau dg algebras are almost exact. It is a common feeling that
almost exact Calabi-Yau dg algebra should be the ``correct'' definition of Calabi-Yau dg algebra (see \cite[Page 1264]{dTdVVdB18}).

\bigskip

\begin{definition}{\rm (\cite{Kel11})
Let $A$ be a homologically smooth dg $K$-algebra.
The {\it $n$-Calabi-Yau completion} or {\it derived $n$-preprojective dg algebra} $\Pi_n(A)$ of $A$ is the tensor dg $K$-algebra
$T_A(\theta) = \bigoplus\limits_{i=0}^\infty\theta^{\ot_Ai} = A\oplus \theta \oplus (\theta\ot_A\theta) \oplus \cdots$,
where $\theta=s^{n-1}\Theta$ and the {\it inverse dualizing complex} $\Theta$ is the cofibrant resolution of the dg $A$-bimodule $\RHom_{A^e}(A,A^e)$.

For a Hochschild $(n-2)$-cycle $\az$ of $A$, the {\it deformed Calabi-Yau completion} $\Pi_n(A,\az)$ of $A$ by $\az$
is the tensor dg $K$-algebra $T_A(\theta)$ with differential $\tilde{d}=d+d_{\az}$,
where $d$ is the differential of the tensor dg $K$-algebra $T_A(\theta)$ and $d_{\az}$ is induced by $\alpha$ as follows: Since
$A\ot_{A^e}^LA[2-n]\cong \RHom_{A^e}(\RHom_{A^e}(A,A^e)[n-2],A)\cong \Hom_{A^e}(\theta[-1],A)$,
$\az$ determines a closed $A$-bimodule morphism $c_\alpha:\theta\to A$ of degree $-1$.
The derivation $d_{\az}$ of $T_A(\theta)$ is induced by the composition $\theta\xrightarrow{c_\alpha}A\hookrightarrow T_A(\theta)$.
}\end{definition}

\begin{theorem}\label{Theorem-Keller} {\rm (\cite[Theorem 4.8]{Kel11} and \cite[Theorem 1.1]{Kel18})}
Let $A$ be a homologically smooth dg $K$-algebra. Then the Calabi-Yau completion $\Pi_n(A)$ of $A$ is an exact $n$-Calabi-Yau dg algebra.
\end{theorem}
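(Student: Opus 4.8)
Since Theorem \ref{Theorem-Keller} is Keller's theorem, the honest plan is to invoke \cite[Theorem 4.8]{Kel11} together with \cite[Theorem 1.1]{Kel18}; nonetheless, here is how I would reconstruct the argument. Write $\Pi:=\Pi_n(A)=T_A(\theta)$ with $\theta=s^{n-1}\Theta$. The proof separates into two rather different tasks: producing a bimodule Calabi--Yau isomorphism $\xi:\RHom_{\Pi^e}(\Pi,\Pi^e)[n]\to\Pi$, and then showing that the class $[\xi]\in HH_n(\Pi)$ is exact.

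For the first task I would start from the tautological short exact sequence of $\Pi$-bimodules
$$0\longrightarrow \Pi\ot_A\theta\ot_A\Pi \xrightarrow{\ u\ } \Pi\ot_A\Pi \xrightarrow{\ m\ } \Pi\longrightarrow 0,$$
valid for any tensor dg algebra $T_A(\theta)$, where $m$ is the multiplication and $u(p\ot x\ot q)=px\ot q-p\ot xq$. Since $A$ is homologically smooth, $A$ is perfect over $A^e$; hence $\Theta$, being a cofibrant resolution of $\RHom_{A^e}(A,A^e)$, is perfect over $A^e$, and so is $\theta$. Consequently $\Pi\ot_A\Pi$ and $\Pi\ot_A\theta\ot_A\Pi$ are perfect over $\Pi^e$, so the sequence above exhibits $\Pi$ as perfect over $\Pi^e$: $\Pi$ is homologically smooth. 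Then I would apply $\RHom_{\Pi^e}(-,\Pi^e)$ to the sequence. The inputs are the induction--restriction adjunction $\RHom_{\Pi^e}(\Pi\ot_A M\ot_A\Pi,\Pi^e)\cong\RHom_{A^e}(M,\Pi^e)$ and biduality over $A^e$: $\RHom_{A^e}(A,\Pi^e)\simeq\Pi\ot_A\Theta\ot_A\Pi$ and $\RHom_{A^e}(\Theta,\Pi^e)\simeq\Pi\ot_A\Pi$, the latter because $\RHom_{A^e}(\RHom_{A^e}(A,A^e),-)$ is biduality for the perfect complex $A$, and both using $M\ot_{A^e}\Pi^e\cong\Pi\ot_A M\ot_A\Pi$. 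Substituting $\Theta=s^{1-n}\theta$ and shifting by $n$, the resulting triangle reads
$$\RHom_{\Pi^e}(\Pi,\Pi^e)[n]\longrightarrow s\,\Pi\ot_A\theta\ot_A\Pi \longrightarrow s\,\Pi\ot_A\Pi \longrightarrow [1],$$
with connecting map (up to sign) $s\,u$ --- this self-duality of $u$ is where the shift $\theta=s^{n-1}\Theta$ does its job. Hence $\RHom_{\Pi^e}(\Pi,\Pi^e)[n]$ is the homotopy fibre of $s\,u$, which is $\Cone(u)\simeq\Pi$ by the short exact sequence once more, giving $\xi$. Thus $\Pi$ is $n$-Calabi--Yau.

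For the second task, exactness, one must lift $[\xi]\in HH_n(\Pi)$ to negative cyclic homology, equivalently show $[\xi]\in\Ker I$. This does not come for free from the construction of $\xi$: one has to exhibit a canonical class in $HN_n(\Pi)$ whose image under the natural map $HN_n(\Pi)\to HH_n(\Pi)$ is $[\xi]$, built from the $A$-linear grading of $\Pi$ together with a compatible refinement at the level of mixed complexes. I expect this to be the \emph{main obstacle}: the bimodule isomorphism above is soft, but exactness genuinely uses the tensor-algebra structure plus the mixed-complex refinement, and this is exactly the content of \cite[Theorem 1.1]{Kel18}; I would ultimately cite it.
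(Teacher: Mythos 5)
The paper offers no proof of this statement at all: it is quoted verbatim from Keller with only the citations to \cite[Theorem 4.8]{Kel11} and \cite[Theorem 1.1]{Kel18}, which is precisely what you propose to do. Your reconstruction sketch is moreover a faithful outline of Keller's actual argument (the tensor-algebra bimodule sequence, self-duality of $u$ after the shift by $\theta=s^{n-1}\Theta$, and the lift to negative cyclic homology for exactness coming from the erratum), so there is nothing to correct.
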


\begin{theorem}\label{Theorem-Yeung} {\rm (\cite[Theorem 3.17]{Yeu16})}
Let $A$ be a homologically smooth dg algebra and $[\az] \in HH_{n-2}(A)$ almost exact.
Then the deformed Calabi-Yau completion $\Pi_n(A,\az)$ of $A$ by $\az$ is an almost exact $n$-Calabi-Yau dg algebra.
\end{theorem}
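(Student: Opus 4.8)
The statement is Yeung's theorem, so strictly one would just invoke \cite{Yeu16}; but a self-contained argument can be set up as a deformation of Keller's theorem (Theorem \ref{Theorem-Keller}), exploiting that $\Pi_n(A,\az)$ and $\Pi_n(A)=\Pi_n(A,0)$ have the same underlying graded algebra $T_A(\theta)$ and differ only by the derivation $d_\az$, which strictly lowers tensor length. Write $\Pi:=\Pi_n(A)$ and $\Pi_\az:=\Pi_n(A,\az)$.

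First I would recall the exact Calabi-Yau structure on $\Pi$. As $\Pi=T_A(\theta)$ is a tensor dg algebra over the homologically smooth $A$, there is the two-term $\Pi$-bimodule resolution $0\to\Pi\ot_A\theta\ot_A\Pi\to\Pi\ot_A\Pi\to\Pi\to0$; applying $\RHom_{\Pi^e}(-,\Pi^e)$ and using smoothness of $A$ together with the $A$-bimodule identification $\RHom_{A^e}(\theta,A^e)\simeq s^{1-n}A$ exhibits $\RHom_{\Pi^e}(\Pi,\Pi^e)[n]$ as quasi-isomorphic to $\Pi$, giving the Calabi-Yau structure $\xi_0$. Keller's computation moreover produces a distinguished negative cyclic lift $\tilde\eta_0\in HN_n(\Pi)$ whose image is $\xi_0\in HH_n(\Pi)$, in fact lying in the image of the Connes operator $B\colon HC_{n-1}(\Pi)\to HH_n(\Pi)$.

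Next I would build the deformed lift. A cocycle representative of $[\az]$ yields the derivation $d_\az$ (through $c_\az\colon\theta\to A$), and the hypothesis that $[\az]\in HH_{n-2}(A)$ is almost exact says $[\az]=P_{n-2}(\tilde\az)$ for some $\tilde\az\in HN_{n-2}(A)$. Using functoriality of the negative cyclic complex along the canonical algebra map $A\hookrightarrow\Pi_\az$ together with the explicit chain-level formula for $d_\az$, one assembles a negative cyclic chain $\tilde\eta_\az=\tilde\eta_0+(\text{a correction built from }\tilde\az)$ in $CN_n(\Pi_\az)$; the mechanism is that $\tilde\eta_0$ is a cycle for the undeformed differential $d$ while $\tilde\az$ is a genuine $(b+uB)$-cycle for $A$, and these two facts conspire to cancel the cross terms involving $d_\az$, so that $\tilde\eta_\az$ is a cycle for the deformed differential $\tilde d=d+d_\az$. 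Then $[\xi_\az]:=P_n(\tilde\eta_\az)\in HH_n(\Pi_\az)$ is by construction in the image of $HN_n(\Pi_\az)\to HH_n(\Pi_\az)$, so once it is shown to be non-degenerate we are finished: $\Pi_\az$ is an almost exact $n$-Calabi-Yau dg algebra.

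Finally, non-degeneracy of $[\xi_\az]$ — that $\xi_\az\colon\RHom_{\Pi_\az^e}(\Pi_\az,\Pi_\az^e)[n]\to\Pi_\az$ is an isomorphism in $\mathcal{D}(\Pi_\az^e)$ — I would verify on associated graded: filter $\Pi_\az$ by $F^p=\bigoplus_{i\ge p}\theta^{\ot_A i}$, observe that $\tilde d$ respects $F^\bullet$ while $d_\az$ strictly decreases it, so $\gr\Pi_\az=(\Pi,d)$; transporting this filtration to the (suitably twisted) tensor-algebra bimodule resolution, hence to both sides of $\xi_\az$, reduces the claim to non-degeneracy of $\xi_0$ for $\Pi$, which is Keller's theorem. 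The main obstacle is twofold: (i) the chain-level construction of $\tilde\eta_\az$ and the verification that it is $\tilde d$-closed — this is precisely where almost-exactness of $[\az]$ is indispensable, and the bookkeeping of signs and of the interplay of the cyclic rotation, $B$, and $d_\az$ is delicate; and (ii) the convergence of the spectral sequence of the filtration $F^\bullet$, so that an isomorphism on $\gr$ really yields one on $\Pi_\az$, which relies on the degreewise finiteness furnished by homological smoothness of $A$ (so that $\theta$ is perfect over $A^e$ and all complexes in sight are finite-dimensional in each degree). I expect (i) to be the more conceptual difficulty and (ii) the more technical one.
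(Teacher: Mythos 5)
There is nothing in the paper to compare your argument against: Theorem \ref{Theorem-Yeung} is imported verbatim from \cite[Theorem 3.17]{Yeu16} and the paper supplies no proof of it, only the citation (exactly as it does for Keller's Theorem \ref{Theorem-Keller}). You correctly note that invoking \cite{Yeu16} is all the paper itself does, so on that count your proposal already matches the paper's ``approach.''

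Judged on its own merits, your sketch is a faithful outline of the actual Keller--Yeung strategy: the two-term bimodule resolution $0\to\Pi\ot_A\theta\ot_A\Pi\to\Pi\ot_A\Pi\to\Pi\to0$ of the tensor algebra, the identification $\RHom_{A^e}(\theta,A^e)\simeq s^{1-n}A$ via biduality for perfect bimodules, the use of the negative cyclic lift $\tilde\az$ of $[\az]$ to correct the negative cyclic class of the undeformed completion, and the reduction of non-degeneracy to the undeformed case via the tensor-length filtration (along which $d$ preserves and $d_\az$ strictly lowers filtration degree) are all the right ingredients. But be aware that what you have written is a program, not a proof: the entire content of Yeung's theorem sits inside your phrase ``these two facts conspire to cancel the cross terms involving $d_\az$,'' and that chain-level construction of $\tilde\eta_\az$ is precisely where the hypothesis of almost-exactness is consumed and where Keller's original (unconditional) claim in \cite{Kel11} had its gap --- so it cannot be waved through. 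Two smaller cautions: the image of the Connes map $B\colon HC_{n-1}\to HH_n$ is the \emph{exact} condition, which is stronger than the almost-exact one you need to propagate, so you should track which of the two your correction term actually preserves; and homological smoothness of $A$ does not by itself give degreewise finite-dimensionality, so the convergence of the spectral sequence of $F^\bullet$ needs an argument (e.g., that in each internal degree only finitely many tensor powers of $\theta$ contribute), not just an appeal to smoothness.
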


\subsection{Trivial extensions and Calabi-Yau completions}

Now we show that the Koszul dual of trivial extension is Calabi-Yau completion.

Let $A$ be a finite dimensional complete dg $K$-algebra.
The {\it $n$-trivial extension} $\T_n(A)$ of $A$ is the augmented dg $K$-algebra $A\oplus A^\vee[-n]$
with the product given by $(a,f)\cdot(a',f'):=(aa',af'+fa')$ for all $a,a'\in A$ and $f,f'\in A^\vee[-n]$.

\begin{theorem}\label{Theorem-TrivExt-CYComp}
Let $A$ be a finite dimensional complete dg $K$-algebra. Then

{\rm (1)} the $n$-trivial extension $\T_n(A)$ of $A$ is an $n$-symmetric dg $K$-algebra,

{\rm (2)} the Koszul dual $A^\dag$ of $A$ is a homologically smooth dg $K$-algebra,

{\rm (3)} the $n$-Calabi-Yau completion $\Pi_n(A^\dag) \cong \T_n(A)^\dag$, and they are both exact $n$-Calabi-Yau dg algebras.
\end{theorem}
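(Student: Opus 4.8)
The plan is to treat the three assertions in turn; (1) is a direct computation, (2) is quoted from \cite{HanLiuWan18}, and the real content is the isomorphism in (3).

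For (1), note that since $A$ is finite dimensional, $\T_n(A)^\vee[-n]\cong s^{-n}A^\vee\oplus A$, so it and $\T_n(A)=A\oplus s^{-n}A^\vee$ have, up to a transposition of summands, the same underlying graded $K$-bimodule. I would write down the corresponding swap map, pin down its signs against the defining product $(a,s^{-n}f)\cdot(a',s^{-n}f')=(aa',s^{-n}((-1)^{|a|n}af'+fa'))$, and verify that it is a bijective morphism of dg $\T_n(A)$-bimodules; this is routine Koszul-sign bookkeeping and yields the $n$-symmetry. For (2) one simply invokes \cite[Theorem 7]{HanLiuWan18}, which guarantees that $A^\dag$ is homologically smooth and hence that $\Pi_n(A^\dag)$ is defined.

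For (3) I would first make the inverse dualizing complex of $B:=A^\dag=\Omega(A^\vee)$ explicit. Because $A$ is finite dimensional and complete, $A^\vee$ is a locally finite conilpotent coalgebra, so the two-sided Koszul complex $A^\dag\otimes_k A^\vee\otimes_k A^\dag$, with the usual twisted differential built from $d_A$, $d_{A^\vee}$ and the coproduct of $A^\vee$ (the bar-cobar formalism of \cite{HanLiuWan18}), is a semifree resolution of the $B$-bimodule $B$. Applying $\Hom_{B^e}(-,B^e)$ and using once more the finite dimensionality, so that $(A^\vee)^\vee\cong A$ as $A$-bimodules, I obtain a semifree (hence cofibrant) model $\Theta=(A^\dag\otimes_k A\otimes_k A^\dag,\,d_\Theta)$ of $\RHom_{B^e}(B,B^e)$, where $d_\Theta$ is the $k$-dual of the Koszul differential. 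Then $\theta:=s^{n-1}\Theta=A^\dag\otimes_k s^{n-1}A\otimes_k A^\dag$ is the free $A^\dag$-bimodule on $s^{n-1}A$, and $\Pi_n(A^\dag)=T_{A^\dag}(\theta)$.

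On the other side, dualizing the square-zero extension $\T_n(A)=A\ltimes A^\vee[-n]$ of dg algebras gives a square-zero co-extension $\T_n(A)^\vee=A^\vee\ltimes s^nA$ of dg coalgebras, in which the $A^\vee$-bicomodule $s^nA$ is the $k$-dual of the $A$-bimodule $A^\vee[-n]$; hence the coaugmentation coideal is $\bar A^\vee\oplus s^nA$ and $\T_n(A)^\dag=\Omega(\T_n(A)^\vee)=T(s^{-1}\bar A^\vee\oplus s^{n-1}A)$. By the standard identity $T_{T(W)}(T(W)\otimes_k V\otimes_k T(W))\cong T(W\oplus V)$, applied with $W=s^{-1}\bar A^\vee$ (so that $T(W)=A^\dag$) and $V=s^{n-1}A$, this is the same underlying graded algebra as $\Pi_n(A^\dag)$. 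The remaining, and I expect hardest, step is to check that the two differentials agree under this identification: on the tensor letters coming from $\bar A^\vee$ the cobar differential of $\T_n(A)^\vee$ restricts to the cobar differential of $A^\vee$, that is, to the internal differential of $A^\dag$, while on the letters coming from $s^{n-1}A$ it is governed by the internal differential of $A$ together with the coproduct of $A^\vee\ltimes s^nA$ on $s^nA$, which is precisely the $k$-dual of the $A$-bimodule structure of $A^\vee$; one must verify, expanding the explicit bar/cobar formulas and the dual-Koszul differential $d_\Theta$, that these coincide up to the relevant Koszul signs. Granting this, the two derivations on $T(s^{-1}\bar A^\vee\oplus s^{n-1}A)$ agree, so $\Pi_n(A^\dag)\cong\T_n(A)^\dag$ as dg $K$-algebras. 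Finally, Keller's Theorem \ref{Theorem-Keller} says $\Pi_n(A^\dag)$ is an exact $n$-Calabi-Yau dg algebra; transporting the homological smoothness and the Calabi-Yau datum $\xi$ along the isomorphism yields the same for $\T_n(A)^\dag$, completing (3).
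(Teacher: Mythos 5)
Your proposal is correct and follows essentially the same route as the paper: part (1) by the evident swap of summands, part (2) by citing \cite[Theorem 7]{HanLiuWan18}, and part (3) by computing $\Theta=A^\dag\ot A\ot A^\dag$ from the two-sided Koszul-type resolution $A^\dag\ot A^\vee\ot A^\dag$, identifying $T_{A^\dag}(A^\dag\ot s^{n-1}A\ot A^\dag)$ with $T(s^{-1}\ol{A}^\vee\oplus s^{n-1}A)=\T_n(A)^\dag$ summand by summand, checking compatibility of differentials, and invoking Theorem \ref{Theorem-Keller} for exactness. The only difference is presentational: you make explicit the tensor-algebra identity underlying the paper's "decompose both sides so that their direct summands equal correspondingly."
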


\begin{proof} (1) It is easy to see that $\T _n(A)=A \oplus A^\vee[-n] \cong (A \oplus A^\vee[-n])^\vee[-n] = \T_n(A)^\vee[-n]$
as dg $\T_n(A)$-bimodules, i.e., $\T_n(A)$ is strictly $n$-symmetric \cite{HanLiuWan18}.

(2) It is just \cite[Theorem 7]{HanLiuWan18}.

(3) It follows from \cite[Proposition 6]{HanLiuWan18} that the dg $A^\dag$-bimodule $A^\dag$
admits a semi-projective resolution $A^\dag\ot A^\vee \ot A^\dag$.
By \cite[Lemma 12]{HanLiuWan18}, we have isomorphisms
$\RHom_{A^{\dag e}}(A^\dag,A^{\dag e}) \cong \Hom_{A^{\dag e}}(A^\dag\ot A^\vee\ot A^\dag, A^{\dag e})
\cong A^\dag\ot A\ot A^\dag$ which is semi-projective.
Thus $\Theta=A^\dag\ot A\ot A^\dag$ and $\theta=s^{n-1}\Theta \cong A^\dag\ot s^{n-1}A\ot A^\dag$,
and further $\Pi_n(A^\dag)=T_{A^\dag}(\theta) = A^\dag\oplus(A^\dag\ot s^{n-1}A\ot A^\dag)\oplus
(A^\dag\ot s^{n-1}A \ot A^\dag \ot s^{n-1}A \ot A^\dag) \oplus \cdots$.

The $n$-trivial extension $\mathbb{T}_n(A)$ is the dg $K$-algebra $A\oplus s^{-n}A^\vee$
with product $(a,s^{-n}f)\cdot(b,s^{-n}g)=(ab,s^{-n}fb+(-1)^{n|a|}s^{-n}ag)$.
Its Koszul dual $\T_n(A)^\dag=\Omega(\T_n(A)^\vee)=T(s^{-1}\ol{\T_n(A)}^\vee)=T(s^{-1}(\ol{A}\oplus s^{-n}A^\vee)^\vee)
=T(s^{-1}(\ol{A}^\vee\oplus (s^{-n}A^\vee)^\vee)) \linebreak = T(s^{-1}\ol{A}^\vee\oplus s^{n-1}A)$.
So $\T_n(A)^\dag=K\oplus(s^{-1}\ol{A}^\vee\oplus s^{n-1}A)\oplus
(s^{-1}\ol{A}^\vee\oplus s^{n-1}A)^{\ot 2} \oplus \cdots$.

Due to $A^\dag=\Omega(A^\vee)=T(s^{-1}\ol{A}^\vee)=K\oplus s^{-1}\ol{A}^\vee \oplus (s^{-1}\ol{A}^\vee)^{\ot 2} \oplus \cdots$,
we can further decompose both $\Pi_n(A^\dag)$ and $\T_n(A)^\dag$
such that their direct summands equal correspondingly.
So we obtain a natural bijection $\Phi: \mathbb{T}_n(A)^\dag\rightarrow \Pi_n(A^\dag)$.
It is a graded $K$-algebra isomorphism and compatible with differentials.
Thus, $\Phi$ is a dg $K$-algebra isomorphism.

By Theorem \ref{Theorem-Keller}, i.e., \cite[Theorem 1.1]{Kel18}, or the proof of \cite[Theorem 3.30]{Yeu16},
we know both $\Pi_n(A^\dag)$ and $\T_n(A)^\dag$ are exact Calabi-Yau algebras.
\end{proof}

As an application of Theorem \ref{Theorem-TrivExt-CYComp}, we can recover \cite[Theorem 5.3]{Guo19}.

\begin{corollary}\label{Corollary-Guo} {\rm (\cite[Theorem 5.3]{Guo19})}
Let $A=kQ/I$ be a Koszul $n$-homogeneous bound quiver algebra, and the twisted trivial extension $A \ltimes (_\nu A)^\vee$ be quadratic.
Then the higher preprojective algebra $\Pi(A^!) \cong (A \ltimes (_\nu A)^\vee)^!$.
Here $\nu$ is the graded automorphism of $A$ sending $a\in Q_1$ to $(-1)^na$
and $_\nu A$ is the twisted $A$-bimodule given by $a\cdot b\cdot c:=\nu(a)bc$ for all $a,b,c\in A$.
\end{corollary}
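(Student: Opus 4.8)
The plan is to derive this as a translation of Theorem \ref{Theorem-TrivExt-CYComp}(3) into the language of homogeneous algebras; the only real content is a careful comparison of gradings and signs, after which the statement is automatic.

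First I would fix the correct dg incarnation of the data. Regard the $n$-homogeneous algebra $A=kQ/I$ as a dg $K$-algebra with $K=k^{Q_0}$, zero differential, and cohomological grading equal to the negative of the path length (weight); then $A_0=K$, the arrows sit in degree $-1$, and $\ol{A}$ is nilpotent because $A$ is finite dimensional, so $A$ is a finite dimensional complete dg $K$-algebra and Theorem \ref{Theorem-TrivExt-CYComp} applies to it (note that no typicality is needed for part (3)). With this convention, Koszulity of $A$ says precisely that the cobar construction $A^\dag=\Omega(A^\vee)$ is formal with cohomology the classical Koszul dual $A^!$ (the quadratic dual when $n=2$, and Guo's $n$-Koszul dual in general), so that $A^!\to A^\dag$ is a quasi-isomorphism of dg $K$-algebras. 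Since the $n$-Calabi-Yau completion is invariant under quasi-isomorphisms of homologically smooth dg algebras and since $\Pi_n$ of $A^!$ equipped with its internal grading reproduces Guo's explicit tensor-algebra presentation of the higher preprojective algebra, we get $\Pi(A^!)\cong\Pi_n(A^\dag)$.

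Next I would identify the twisted trivial extension with the $n$-trivial extension. The key observation is that the Koszul sign $(-1)^{|a|n}$ in the product $(a,s^{-n}f)\cdot(a',s^{-n}f')=(aa',s^{-n}((-1)^{|a|n}af'+fa'))$ of $\T_n(A)=A\oplus A^\vee[-n]$ equals, for $a$ a path of length $w$ (hence of degree $-w$, with sign $(-1)^{nw}$), exactly the effect of the automorphism $\nu$ that multiplies each arrow by $(-1)^n$. Consequently the dg $A$-bimodule $A^\vee[-n]$ underlying $\T_n(A)$ is, after discarding the shift, isomorphic to the twisted bimodule $(_\nu A)^\vee$, and this produces a dg $K$-algebra isomorphism $\T_n(A)\cong A\ltimes(_\nu A)^\vee$. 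The hypothesis that $A\ltimes(_\nu A)^\vee$ is quadratic then guarantees that this dg algebra is again the dg incarnation of a quadratic algebra, so that its dg Koszul dual $(A\ltimes(_\nu A)^\vee)^\dag$ coincides, up to quasi-isomorphism, with the quadratic dual $(A\ltimes(_\nu A)^\vee)^!$.

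Finally I would assemble the pieces: Theorem \ref{Theorem-TrivExt-CYComp}(3) gives $\Pi_n(A^\dag)\cong\T_n(A)^\dag$; transporting the left-hand side to $\Pi(A^!)$ and the right-hand side to $(A\ltimes(_\nu A)^\vee)^!$ via the identifications above yields $\Pi(A^!)\cong(A\ltimes(_\nu A)^\vee)^!$. I expect the main obstacle to be the bookkeeping in the middle two paragraphs: pinning down the grading (in particular the internal/Adams grading needed to view $A^!$ and $\Pi(A^!)$ as honest dg algebras rather than merely as cohomology), checking that the sign produced by the Koszul rule on $A^\vee[-n]$ is literally the $\nu$-twist, and justifying that under the quadraticity (and Koszulity) hypotheses the cobar/dg Koszul dual $(-)^\dag$ may be replaced by the homogeneous Koszul dual $(-)^!$ without loss. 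Once these dictionaries are in place, the corollary is immediate.
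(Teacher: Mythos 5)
Your overall strategy --- specialize Theorem \ref{Theorem-TrivExt-CYComp}(3) to $A$ viewed as a dg algebra with zero differential, and observe that the Koszul sign $(-1)^{|a|n}$ in the product of $\T_n(A)$ is exactly the $\nu$-twist --- is the same as the paper's, and that second observation is correct and is precisely how the paper accounts for $\nu$. But the mechanism you propose for descending from the dg Koszul duals to the classical ones has a genuine gap. You claim that quadraticity of $A\ltimes(_\nu A)^\vee$ guarantees that its cobar dual $(A\ltimes(_\nu A)^\vee)^\dag$ is quasi-isomorphic to the quadratic dual $(A\ltimes(_\nu A)^\vee)^!$. That is false: for a quadratic algebra $B$, $\Omega(B^\vee)$ is quasi-isomorphic to $B^!$ precisely when $B$ is Koszul, and the corollary assumes only that the twisted trivial extension is quadratic, not Koszul. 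The paper avoids this entirely by introducing an auxiliary \emph{syzygy degree} $\omega(\langle c_1|\dots|c_n\rangle)=\sum_i(|c_i|+1)$ on cobar constructions of coalgebras with zero differential, under which the differential has degree $+1$ and $H^0$ of $\Omega(B^\vee)$ is always $B^!$ (no formality needed); it then takes $H^0$ with respect to this degree on both sides of the honest dg isomorphism $\Pi_{-n-1}(A^\dag)\cong\T_{-n-1}(A)^\dag$.

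A second, related gap is on the preprojective side. Quasi-isomorphism invariance of the Calabi--Yau completion yields at best a quasi-isomorphism $\Pi_n(A^!)\simeq\Pi_n(A^\dag)$ of dg algebras, whereas the corollary asserts an isomorphism of ordinary graded algebras; and saying that $\Pi_n(A^!)$ ``reproduces Guo's presentation'' hides the actual computation. The paper must (and does) show that $H^0$ with respect to the syzygy degree of the dg bimodule $A^\dag\otimes s^{-n-2}A\otimes A^\dag$ is $\Ext^n_{(A^!)^e}(A^!,(A^!)^e)$ --- via the quasi-isomorphism $\phi:A^\dag\to A^!$ coming from Koszulity of $A$ and the Koszul bimodule resolution of $A^!$ --- and then uses the fact that this complex is concentrated in non-positive syzygy degrees to commute $H^0$ with tensor powers over $A^\dag$, obtaining $H^0(\Pi_{-n-1}(A^\dag))\cong T_{A^!}(E)=\Pi(A^!)$. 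Note also that the relevant shift is $-n-1$ (so that $A_n$ lands in syzygy degree $0$), not a generic $n$; pinning this down is part of the bookkeeping you defer but it is where the statement actually gets proved. Without the syzygy-degree device, or some substitute for it, the steps ``replace $(-)^\dag$ by $(-)^!$ on both sides'' do not go through as stated.
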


\begin{proof}
First of all, for a coaugmented dg $K$-coalgebra $C$ with differential zero, we can define a new degree, called the {\it syzygy degree},
on the cobar construction $\Omega C$ of $C$ by $\omega(\langle c_1|\dots |c_n\rangle):=\sum\limits_{i=1}^n(|c_1|+1)$ (see \cite[3.3.2]{LodVal12}).
Then $\Omega C$ is still a dg $K$-algebra with respect to the syzygy degree but the differential is of degree $1$.

Write $A=K\oplus A_1\oplus\cdots\oplus A_n$. It is an augmented dg $K$-algebra with grading by length of path and differential zero.
Since $A$ is Koszul, analogous to \cite[Proposition 3.3.2]{LodVal12},
we have a dg $K$-algebra quasi-isomorphism $\phi:A^\dag\twoheadrightarrow A^!$
and $A^!\cong H^0(A^\dag)$ with respect to the syzygy degree on $A^\dag=\Omega(A^\vee)$.
Now $A^!$ is an ordinary algebra, i.e., it is concentrated on degree 0.

By Theorem \ref{Theorem-TrivExt-CYComp}, we have a dg $K$-algebra isomorphism $\Pi_{-n-1}(A^\dag)\cong \T_{-n-1}(A)^\dag$,
which is still a dg $K$-algebra isomorphism with respect to the syzygy degrees on $A^\dag$ and $\T_{-n-1}(A)^\dag$.
Taking the $0$-th cohomologies with respect to the syzygy degree on two sides of the isomorphism
$\Pi_{-n-1}(A^\dag)\cong\mathbb{T}_{-n-1}(A)^\dag$, we will obtain $\Pi(A^!) \cong (A \ltimes A^\vee_\nu)^!$.

Note that each $a_i\in A_i$ left acts on the dg $A$-bimodule
$s^{-n-2}A\subseteq s^{-1}\ol{A}^\vee\oplus s^{-1}(s^{n+1}A^\vee)^\vee=s^{-1}\ol{\T_{-n-1}(A)^\vee} \subseteq \T_{-n-1}(A)^\dag$
will create a sign $(-1)^{ni}$.
Taking the $0$-th cohomology of $\T_{-n-1}(A)^\dag$ with respect to the syzygy degree,
we obtain $H^0(\T_{-n-1}(A)^\dag)\cong(A \ltimes (_\nu A)^\vee)^!.$

Next, we compute the $0$-th cohomology of $\Pi_{-n-1}(A^\dag)$ with respect to syzygy degree.
By definition, $\Pi_{-n-1}(A^\dag)=T_{A^\dag}(A^\dag\otimes s^{-n-2}A\otimes A^\dag)
=A^\dag\oplus (A^\dag\otimes s^{-n-2}A\otimes A^\dag)\oplus (A^\dag\otimes s^{-n-2}A\otimes A^\dag)^{\otimes_{A^\dag} 2}\oplus \cdots$.
Note that $s^{-n-2}A$ is equal to $s^{-1}(s^{n+1}A^\vee)^\vee$ in $\T_{-n-1}(A)^\dag=\Omega(\T_{-n-1}(A)^\vee)$.
By the definition of syzygy degree on the cobar construction, we have $\omega(s^{-n-2}a_i) = |(s^{n+1}a_i^\vee)^\vee|+1=i-n$ for all $a_i\in A_i$.
In particular, $\omega(s^{-n-2}a_n)=0$.
The dg $K$-algebra quasi-isomorphism $\phi: A^\dag\rightarrow A^!$ induces a quasi-isomorphism
$\psi:A^\dag\otimes s^{-n-2}A\otimes A^\dag\rightarrow A^!\otimes s^{-n-2}A\otimes A^!$,
where $A^!\otimes s^{-n-2}A\otimes A^!$ is the cochain complex
\[0\rightarrow A^!\otimes K\otimes A^!\rightarrow A^!\otimes A_1\otimes A^!\rightarrow \dots \rightarrow A^!\otimes A_n\otimes A^!\rightarrow 0\]
with degree $i-n$ component $A^!\otimes A_i\otimes A^!$.
On the other hand, we consider the Koszul resolution $K(A^!)$ of the $A^!$-bimodule $A^!$
\[0\rightarrow A^!\otimes A_n^\vee\otimes A^!\otimes \rightarrow A^!\otimes A_{n-1}^\vee\otimes A^!\rightarrow \dots \rightarrow A^!\otimes A^! \rightarrow 0.\]
Then $A^!\otimes s^{-n-2}A\otimes A^!\cong\Hom_{(A^!)^e}(K(A^!),(A^!)^e)[n]$.
Taking the $0$-th cohomology with respect to syzygy degree, we get
$H^0(A^\dag\otimes s^{-n-2}A\otimes A^\dag) = H^0(A^!\otimes s^{-n-2}A\otimes A^!) = H^0(R\Hom_{(A^!)^e}(A^!,(A^!)^e)[n]) = \Ext^n_{(A^!)^e}(A^!,(A^!)^e).$
Denote $\Ext^n_{(A^!)^e}(A^!,(A^!)^e)$ by $E$ for short.
Since $A^\dag\otimes s^{-n-2}A\otimes A^\dag$ is non-positively graded with respect to the syzygy degree,
$H^0((A^\dag\otimes s^{-n-2}A\otimes A^\dag)^{\otimes_{A^\dag} m})= E^{\otimes_{A^!} m}$ for all $m\in \mathbb{N}$.
Therefore, $H^0(\Pi_{-n-1}(A^\dag))\cong T_{A^!}(E)=\Pi(A^!)$.
\end{proof}

\subsection{Exact Hochschild extensions and deformed Calabi-Yau completions}

Now we show that the Koszul dual of exact Hochschild extension is deformed Calabi-Yau completion.

\begin{theorem} \label{Theorem-ExHochExt-DefCYComp}
Let $A$ be a finite dimensional complete typical dg $K$-algebra, $n\in\mathbb{Z}$ satisfying $A_{2-n}=0$,
and $[\az] \in H^2(A,A^\vee[-n])$ exact. Then

{\rm (1)} the Hochschild extension $\T_n(A,\az)$ of $A$ by $\az$ is an $n$-symmetric $A_\infty$-algebra,

{\rm (2)} the Koszul dual $[\az^\dag]\in HH_{n-2}(A^\dag)$ of $\az$ is almost exact,

{\rm (3)} the deformed Calabi-Yau completion $\Pi_n(A^\dag,\az^\dag) \cong \T_n(A,\az)^\dag$, and they are both almost exact $n$-Calabi-Yau dg algebras.
\end{theorem}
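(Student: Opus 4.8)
The plan is to treat the three assertions in turn, reusing as much machinery as possible from the trivial extension case (Theorem \ref{Theorem-TrivExt-CYComp}) and the Koszul-duality dictionary of Section 4. Part (1) is exactly Theorem \ref{Theorem-HochExt-Sym}, so nothing new is needed there; one only records that the hypotheses ($A$ finite dimensional complete typical, $A_{2-n}=0$, $[\az]$ exact) are precisely those of that theorem. Part (2) is an immediate consequence of Proposition \ref{Proposition-Ex-AlmostEx}: the Koszul dual of an exact Hochschild cohomology class in $H^2(A,A^\vee[-n])=HH_{2-n}(A)^\vee$ is an almost exact Hochschild homology class, and a degree count gives $[\az^\dag]\in HH_{-(2-n)}(A^\dag)=HH_{n-2}(A^\dag)$, which is the degree in which the deformed Calabi-Yau completion $\Pi_n(A^\dag,\az^\dag)$ is defined.

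The substantive content is part (3). First I would establish the underlying graded algebra isomorphism exactly as in the proof of Theorem \ref{Theorem-TrivExt-CYComp}: using \cite[Proposition 6, Lemma 12]{HanLiuWan18}, the inverse dualizing complex of $A^\dag$ is $\Theta\cong A^\dag\ot A\ot A^\dag$, so $\theta=s^{n-1}\Theta\cong A^\dag\ot s^{n-1}A\ot A^\dag$ and $\Pi_n(A^\dag,\az^\dag)=T_{A^\dag}(\theta)$ as a graded algebra; on the other side, unwinding the cobar construction gives $\T_n(A,\az)^\dag=\Omega(\T_n(A,\az)^\vee)=T(s^{-1}\ol{A}^\vee\oplus s^{n-1}A)$ as a graded algebra, since the $A_\infty$-structure on $\T_n(A,\az)$ does not change the underlying graded module $A\oplus s^{-n}A^\vee$. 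Decomposing both tensor algebras over $A^\dag=T(s^{-1}\ol{A}^\vee)$ yields, summand by summand, a graded $K$-algebra isomorphism $\Phi:\T_n(A,\az)^\dag\to\Pi_n(A^\dag,\az^\dag)$, exactly the map $\Phi$ of Theorem \ref{Theorem-TrivExt-CYComp}. The new point is that $\Phi$ intertwines the two differentials. Each differential splits as (bar/Koszul part) $+$ (deformation part): on $\T_n(A,\az)^\dag=\Omega(\T_n(A,\az)^\vee)$ the cobar differential has a piece $d_0$ coming from the $A_\infty$-operations $m_n$ of $\T_n(A,\az)$ — which, by the very definition of the Hochschild extension $\T(A,A^\vee[-n],\az)$ in Section 3.3, is the sum of the operations of the dg algebra $A$ (already accounted for by the undeformed cobar differential, matching the $d$ of $T_{A^\dag}(\theta)$ under $\Phi$ by Theorem \ref{Theorem-TrivExt-CYComp}) together with a correction built from $\az$; I would check that this correction is carried by $\Phi$ precisely to the derivation $d_{\az^\dag}$ induced by the closed bimodule morphism $c_{\az^\dag}:\theta\to A^\dag$. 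Concretely, one must verify that the cycle $\az\in Z_{-2}\Hom_{K^e}(BA,A^\vee[-n])$ defining the $A_\infty$-extension corresponds, under the identifications $HH_{2-n}(A)^\vee\cong HH_{n-2}(A^\dag)\cong\Hom_{\mathcal D(A^{\dag e})}(\RHom_{A^{\dag e}}(A^\dag,A^{\dag e})[n-2],A^\dag)$ of Section 4.1 and the Calabi-Yau setup of Section 5.1, to the class $[\az^\dag]$ whose representative $c_{\az^\dag}$ drives $d_{\az^\dag}$; this is where the compatibility of $h_\bullet$ with the relevant maps (Theorem \ref{Theorem-HHHH_Connes}) and the explicit shape of the isomorphism $\Phi$ are combined.

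Finally, having the dg algebra isomorphism $\Pi_n(A^\dag,\az^\dag)\cong\T_n(A,\az)^\dag$, the Calabi-Yau property follows from Theorem \ref{Theorem-Yeung} applied to $A^\dag$ (which is homologically smooth by Theorem \ref{Theorem-TrivExt-CYComp}(2) = \cite[Theorem 7]{HanLiuWan18}) and the almost exact class $[\az^\dag]$ of part (2): $\Pi_n(A^\dag,\az^\dag)$ is an almost exact $n$-Calabi-Yau dg algebra, and the isomorphism transports this structure to $\T_n(A,\az)^\dag$. I expect the main obstacle to be the bookkeeping in the previous paragraph: tracking all the suspension signs through the four successive identifications — the dual bar/cobar identification of Lemma \ref{Lemma-Free-Bicom-Hom}, the Koszul-duality isomorphism $h_\bullet$ of Theorem \ref{Theorem-HHHH_Connes}, the Van den Bergh–type identification of Hochschild homology classes with bimodule morphisms in $\mathcal D(A^{\dag e})$, and the passage from such a morphism to the deformation derivation $d_{\az^\dag}$ — and confirming that the composite sends the specific cocycle $\az$ used to build $\T_n(A,\az)$ to a representative $c_{\az^\dag}$ of $[\az^\dag]$ on the nose, not merely up to homotopy. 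Once that single compatibility is pinned down, the rest is assembly from results already in the excerpt.
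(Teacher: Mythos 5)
Your proposal follows essentially the same route as the paper: parts (1) and (2) are cited verbatim from Theorem \ref{Theorem-HochExt-Sym} and Proposition \ref{Proposition-Ex-AlmostEx}, and part (3) builds the graded isomorphism $\Phi$ exactly as in Theorem \ref{Theorem-TrivExt-CYComp} and then invokes Theorem \ref{Theorem-Yeung}. The one place you flag as the real work — checking that the $\az$-correction in the cobar differential of $\T_n(A,\az)^\dag$ matches the deformation derivation $d_{\az^\dag}$ (given explicitly in the paper by $1\ot s^{n-1}a\ot 1\mapsto\az^\vee(s^na)$) — is precisely the step the paper asserts with minimal detail, so your outline is, if anything, more explicit about where the verification lies.
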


\begin{proof} (1) This is just Theorem \ref{Theorem-HochExt-Sym}.

(2) This is just Proposition \ref{Proposition-Ex-AlmostEx}.

(3) For simplicity, we denote $\T_n(A,\az)$ by $\T$ and $\Pi_n(A^\dag,\az^\dag)$ by $\Pi$.

Since $A$ is finite dimensional typical and $|\alpha|=-2$, we have $\alpha(({s\ol{A}})^{\ot i})=0$ for $i\gg0$.
Thus the graded dual $\T^\vee$ of $\T$ is a coaugmented $A_\infty$-coalgebra.
Then we have $\T^\dag=\Omega(\T^\vee)=T(s^{-1}\ol{A}^\vee \oplus s^{-1}(s^{-n}A^\vee)^\vee) = T(s^{-1}\ol{A} \oplus s^{n-1}A)$.

The deformed Calabi-Yau completion of $A^\dag$ is $\Pi=T_{A^\dag}(A^\dag\ot s^{n-1}A \ot A^\dag)$ with differential $\tilde{d}=d+d_\az$,
where $d$ is the differential of the tensor dg $K$-algebra $T_{A^\dag}(A^\dag\ot s^{n-1}A \ot A^\dag)$
and $d_\az$ is the differential determined by the following map:
$$A^\dag \otimes s^{n-1}A \ot A^\dag \to A^\dag \hookrightarrow T_{A^\dag}(A^\dag \ot s^{n-1}A \ot A^\dag),\
1\ot s^{n-1}a\ot 1 \mt \az^\vee(s^na).$$

As in the proof of Theorem \ref{Theorem-TrivExt-CYComp}, we can decompose both $\T^\dag$ and $\Pi$ such that their direct summands equal correspondingly.
So we obtain a bijection $\Phi: \T^\dag \to \Pi$, which is a graded $K$-algebra morphism and compatible with differentials, i.e.,
a dg $K$-algebra isomorphism. By Theorem \ref{Theorem-Yeung}, i.e., \cite[Theorem 3.17]{Yeu16},
$\Pi_n(A^\dag,\az^\dag)$, and thus $\T_n(A,\az)^\dag$, is an almost exact $n$-Calabi-Yau dg algebras.
\end{proof}

\noindent {\footnotesize {\bf ACKNOWLEDGEMENT.} The authors are sponsored by Project 11571341 and 11971460 NSFC.}

\footnotesize


\begin{thebibliography}{99}

\bibitem{Abb15} H. Abbaspour, On algebraic structures of the Hochschild complex, in: Free Loop Spaces in Geometry and Topology,
in: IRMA Lectures in Mathematics and Theoretical Physics, vol. 24, EMS Publishing House, Zurich, 2015, pp. 165--222.

\bibitem{Ami09} C. Amiot, Cluster categories for algebras of global dimension 2 and quivers with potential,
Ann. Inst. Fourier (Grenoble) 59 (2009), no. 6, 2525--2590.

\bibitem{AusReiSma95} M. Auslander, I. Reiten and S.O. Smal{\o}, Representation theory of Artin algebras.
Cambridge Stud. Adv. Math. 36, Cambridge University Press, Cambridge, 1995.

\bibitem{BeiGinSch88} A.A. Beilinson, V.A. Ginsburg and V.V. Schechtman, Koszul duality, J. Geom. Phys. 5 (1988), no. 3, 317--350.

\bibitem{BeiGinSoe96} A. Beilinson, V. Ginzburg and W. Soergel, Koszul duality patterns in representation theory,
J. Amer. Math. Soc. 9 (1996), no. 2, 473--527.

\bibitem{BraNes37} R. Brauer and C. Nesbitt, On the regular representations of algebras, Proc. Nat. Acad. Sci. USA
23 (1937), 236--240.

\bibitem{Bru66} A. Brumer, Pseudocompact algebras, profinite groups and class formations, J. Algebra 4 (1966), 442--470.

\bibitem{dTdVVdB18} L. de Thanhoffer de V\"{o}lcsey and M. Van den Bergh, Calabi-Yau deformations and
negative cyclic homology, J. Noncommut. Geometry 12 (2018), no. 4, 1255--1291.

\bibitem{GetJon90} E. Getzler and J.D.S. Jones, $A_\infty$ algebras and the cyclic bar complex, Ill. J. Mathematics (1990), 256--283.

\bibitem{Gin06} V. Ginzburg, Calabi-Yau algebras, arXiv:math/0612139 [math.AG].

\bibitem{GinKap94} V. Ginzburg and M. Kapranov, Koszul duality for operads, Duke Math. J. 76 (1994), 203--272.

\bibitem{GraIya19} J. Grant and O. Iyama, Higher preprojective algebras, Koszul algebras, and superpotentials, arXiv:1902.07878v1 [math.RT].

\bibitem{Guo19} J.Y. Guo, On trivial extensions and higher preprojective algebras, arXiv:1902.04772v1 [math.RT].

\bibitem{Guo11} L.Y. Guo, Cluster tilting objects in generalized higher cluster categories, J. Pure Appl. Algebra 215 (2011), no. 9, 2055--2071.

\bibitem{HanLiuWan18} Y. Han, X. Liu and K. Wang, Hochschild (co)homologies of dg $K$-rings and their Koszul duals, arXiv:1810.05969v1 [math.KT].

\bibitem{Her15} E. Herscovich, Hochschild (co)homology and Koszul duality, arXiv:1405.2247v2 [math.KT].

\bibitem{Her18} E. Herscovich, Cyclic $A_\infty$-algebra and cyclic homology, Preprint 2018.

\bibitem{Her18HHA} E. Herscovich, Using torsion theory to compute the algebraic structure of Hochschild (co)homology,
Homology, Homotopy and Applications 20 (2018), no. 1, 117--139.

\bibitem{HirMil12} J. Hirsh and J. Milles, Curved Koszul duality theory, Math. Ann. 354 (2012), 1465--1520.

\bibitem{Ita19} T. Itagaki, Symmatric Hochschild extension algebras and normalized 2-cocycles, Arch. Math. 112 (2019), no. 3, 249--259.

\bibitem{Kel94} B. Keller, Deriving DG categories, Ann. Sci. \'{E}cole Norm. Sup. 27 (1994), 63--102.

\bibitem{Kel01} B. Keller, Introduction to A-infinity algebras and modules, Homology Homotopy Appl. 3 (2001), no. 1, 1--35.

\bibitem{Kel05} B. Keller, On triangulated orbit categories, Doc. Math. 10 (2005), 551--581.

\bibitem{Kel08} B. Keller, Calabi-Yau triangulated categories, Trends in representation theory of algebras and related topics,
EMS Ser. Congr. Rep., Eur. Math. Soc., Z\"{u}rich, 2008, pp. 467--489.

\bibitem{Kel11} B. Keller, Deformed Calabi-Yau completions, With an appendix by M. Van den Bergh, J. Reine Angew. Math. 654 (2011), 125--180.

\bibitem{Kel18} B. Keller, Erratum to ''Deformed Calabi-Yau completions'', arXiv:1809.01126v1 [math.RT].

\bibitem{KonVla13} M. Kontsevich and Y. Vlassopoulos, Pre-Calabi-Yau algebras and topological quantum field theories, Preprint 2013.

\bibitem{Lef03} K. Lef\`{e}vre-Hasegawa, Sur les $A_\infty$-cat\'{e}gories, Ph.D. Thesis, Paris, 2003 (French).

\bibitem{Lod98} J.-L. Loday, Cyclic homology, The second edition,
Grundlehren der mathematischen Wissenschaften 301, Springer-Verlag Berlin Heidelberg, 1998.

\bibitem{LodVal12} J.-L. Loday and B. Vallette, Algebraic operads, Grundlehren der mathematischen Wissenschaften 346, Springer-Verlag Berlin Heidelberg, 2012.

\bibitem{LuPalWuZha04} D.M. Lu, J.H. Palmieri, Q.S. Wu and J.J. Zhang, $A_\infty$-algebras for ring theorists,
Algebra Colloq. 11 (2004), 91--128.

\bibitem{LuPalWuZha08} D.M. Lu, J.H. Palmieri, Q.S. Wu and J.J. Zhang, Koszul equivalences in $A_\infty$-algebras,
New York J. Math. 14 (2008), 325--378.

\bibitem{Lun10} V.A. Lunts, Categorical resolution of singularities, J. Algebra 323 (2010), 2977--3003.

\bibitem{Mes16} S. Mescher, A primer on A-infinity-algebras and their Hochschild homology, arXiv:1601.03963v1 [math.RA], 2016.

\bibitem{OhnTakYam99} Y. Ohnuki, K. Takeda and K. Yamagata, Symmetric Hochschild extension algebras, Colloq. Math. 80 (1999), 155--174.

\bibitem{Pri70} S.B. Priddy, Koszul resolutions, Trans. Amer. Math. Soc. 152 (1970), 39--60.

\bibitem{Sei17} P. Seidel, Fukaya $A_\infty$-structures associated to Lefschetz fibrations I,
in: Algebra, Geometry, and Physics in the 21st Century, Progress in Mathematics, vol. 324, Birkha\"{u}ser, 2017, pp 295--364.

\bibitem{SkoYam11} A. Skowro\'{n}ski and K. Yamagata, Frobenius algebras. I. Basic representation theory,
EMS Textbooks in Mathematics, European Mathematical Society (EMS), Z\"{u}rich, 2011.

\bibitem{Sta63}  J.D. Stasheff, Homotopy associativity of H-spaces I, II, Trans. Amer. Math. Soc. 108 (1963) 275--292,
293--312.

\bibitem{Tra08} T. Tradler, The Batalin-Vilkovisky algebra on Hochschild cohomology
induced by infinity inner products, Ann. Inst. Fourier 58 (2008), no. 7, 2351--2379.

\bibitem{TraZei07} T. Tradler and M. Zeinalian, Algebraic string operations, K-Theory 38 (2007), 59--82.

\bibitem{Val07} B. Vallette, A Koszul duality for props, Trans. Amer. Math. Soc. 359 (2007), 4865--4943.

\bibitem{Van98} M. Van den Bergh, A relation between Hochschild homology and
cohomology for Gorenstein rings, Proc. Amer. Math. Soc. 126 (1998),
1345--1348. Erratum: Proc. Amer. Math. Soc. 130 (2002), 2809--2810.

\bibitem{Van15} M. Van den Bergh, Calabi-Yau algebras and superpotentials,
Selecta Math. (N.S.) 21 (2015), no. 2, 555--603.

\bibitem{Yam96} K. Yamagata, Frobenius algebras, in "Handbook of Algebra 1," Vol. 1, pp. 841--887, Elsevier, Amsterdam/New York, 1996.

\bibitem{Yeu16} W. Yeung, Relative Calabi-Yau completion, arXiv:1612.06352 [math.RT].

\end{thebibliography}
\end{document}